\newtheorem{theorem}{Theorem}[section]
\newtheorem{lemma}[theorem]{Lemma}
\newtheorem{remark}[theorem]{Remark}
\newcommand{\R}{\mathbb R}
\newcommand{\N}{\mathbb N}
\newcommand{\Z}{\mathbb Z}
\newcommand{\HE}{\mathbb H}
\newcommand{\A}{\mathcal A}
\newcommand{\V}{\mathcal V}
\newcommand{\ZZ}{\mathcal Z}
\newcommand{\G}{\mathcal G}
\newcommand{\ST}{\mathcal S}
\newcommand{\F}{\mathfrak F}
\newcommand{\T}{\mathcal T}
\newcommand{\CH}{\mathcal C}
\newcommand{\Hk}{\mathcal{H}^k}
\newcommand{\kulma}[2]{\angle({#1},{#2})}
\DeclareMathOperator{\spt}{spt}
\DeclareMathOperator{\sisus}{int}
\DeclareMathOperator{\Lip}{Lip}
\newcommand{\LEK}{\mathcal{L}^k}
\def\Xint#1{\mathchoice
   {\XXint\displaystyle\textstyle{#1}}%
   {\XXint\textstyle\scriptstyle{#1}}%
   {\XXint\scriptstyle\scriptscriptstyle{#1}}%
   {\XXint\scriptscriptstyle\scriptscriptstyle{#1}}%
   \!\int}
\def\XXint#1#2#3{{\setbox0=\hbox{$#1{#2#3}{\int}$}
     \vcenter{\hbox{$#2#3$}}\kern-.5\wd0}}
\def\dashint{\Xint-}
\begin{document}

\title[A sufficient condition]{A sufficient condition for having big pieces of bilipschitz images of subsets of Euclidean space in Heisenberg groups}
\author{Immo Hahlomaa}
\address{Department of Mathematics and Statistics, P.O. Box 35 (MaD), FI-40014 University of Jy\-v\"as\-ky\-l\"a, Finland}
\email{immo.a.hahlomaa@jyu.fi}
\subjclass[2000]{28A75}
\thanks{The author acknowledges the support of the Academy of Finland, project \#131729}
\date{}
\pagestyle{plain}

\begin{abstract}
In this article we extend a euclidean result of David and Semmes to the Heisenberg group by giving
a sufficient condition for a $k$-Ahlfors-regular subset to have
big pieces of bilipschitz images of subsets of $\R^k$.
This Carleson type condition measures how well the set can be approximated by the Heisenberg $k$-planes at different scales and locations.
The proof given here follow the paper of David and Semmes.
\end{abstract}

\maketitle

\section{Introduction}

In \cite{MR1069238} and \cite{MR1182488} Jones and Okikiolu proved that a bounded set $E\subset\R^n$ is contained in a rectifiable curve
if and only if
\begin{align}
\label{BJ}
  \int_0^\infty\int_{\R^n} \beta^E_\infty(x,t)^2\, dx\, \frac{dt}{t^n} < \infty,
\end{align}
where
\[ \beta^E_\infty(x,t) = \inf_L t^{-1}\sup\left\{\, d_e(y,L)\, :\, y\in B^{d_e}_E(x,t)\, \right\}, \]
with the infimum taken over all lines in $\R^n$.
Here and in the sequel $d_e$ denotes the euclidean metric in $\R^n$ (for any $n$ in question each time)
and $B^\rho_Y(x,r) = B_Y(x,r) = \{ y\in X\, :\, \rho(y,x) \leq r \}$ 
for a metric space $(X,\rho)$, $Y\subset X$, $x\in X$ and $r\geq 0$.
In \cite{MR1113517} David and Semmes gave a higher dimensional version of the above theorem for $k$-regular subsets of $\R^n$
(where $k$ is an integer between $0$ and $n$)
by showing that a closed $k$-regular set $E\subset\R^n$ has big pieces of Lipschitz images of $\R^k$ if and only if
there is $C<\infty$ such that
\begin{align}
\label{urehto}
  \int_0^r\int_{B_E(z,r)} \beta^E_1(x,t)^2\, d\mathcal{H}^k_E(x)\, \frac{dt}{t} \leq Cr^k
\end{align}
for all $z\in E$ and $r>0$, where
\begin{align*}
  \beta^E_1(x,t) = t^{-k-1}\inf_L\int_{B_E(x,t)} d_e(y,L)\, d\mathcal{H}^k_E(y)
\end{align*}
with infimum taken over all $k$-planes in $\R^n$.
In fact David and Semmes gave in \cite{MR1113517} several equivalent conditions to \eqref{urehto} and said that
a closed $k$-regular set $E\subset\R^n$ is uniformly rectifiable if it satisfies these conditions.

Above the metric notions are of course taken with respect to $d_e$. More generally, we say that
a metric space $(X,\rho)$ is $k$-\emph{regular} if there exists a constant $C\in\R$ such that
$C^{-1}r^k \leq \Hk_X(B^\rho_X(x,r)) \leq Cr^k$ for any $x\in X$ and $r\in ]0,\rho(X)]$,
where $\Hk_X$ is the $k$-dimensional Hausdorff measure on $(X,\rho)$.
The smallest such constant $C$ will be denoted by $C_{(X,\rho)}$.
Further we say that $(X,\rho)$ \emph{has big pieces of bilipschitz images of subsets of} $\R^k$ (with constants $K$ and $c$)
if for any $x\in X$ and $r\in ]0,\rho(X)[$ there exists a $K$-bilipschitz function $f:A\to X$
(w.r.t. the metrics $d_e$ and $\rho$) with
$A\subset B^{d_e}_{\R^k}(0,r)$ such that $\Hk_X(f(A)\cap B_X(x,r))\geq cr^k$. 

In \cite{MR2373273} Schul extended the one dimensional result of Jones and Okikiolu to Hilbert spaces (with the condition \eqref{BJ} modified in an appropriate way). 
Further in \cite{MR2371434} Ferrari, Franchi and Pajot gave for a compact subset $E$ of the Heisenberg group $\HE^1$
(endowed with its Carnot-Carath\'eodory metric)
an analogue of the condition~\eqref{BJ}
which measures the deviation of $E$ from a best approximating Heisenberg straight line
(i.e. an element of $\V^1$, see \eqref{V}) at different scales and locations.
They showed that this condition is sufficient for $E$ to be  contained in a rectifiable curve.
Juillet gave in \cite{MR2789375} an example which shows that it is not necessary.
Following \cite{MR2371434} we define for a $k$-regular subset $E$ of the Heisenberg group $\HE^n$ an analogue for \eqref{urehto}
and give a proof for the following theorem.
For the definitions see Sections~\ref{secH} and \ref{secbd}.
Theorem~\ref{th} is invariant under bilipschitz change of metric (see also \eqref{beta1}).
Note that the Kor\'anyi metric $d$ (see \eqref{d}) which we below use exclusively is bilipschitz equivalent with the
usual Carnot-Carath\'eodory metric on $\HE^n$.

\begin{theorem}
\label{th}
Let $k\in\N$ and $E$ be a $k$-regular subset of the Heisenberg group $\HE^n$.
If there is a constant $C$ such that
\begin{align}
\label{oletus}
  \int_0^r\int_{B_E(x,r)} \beta^E_1(y,t)^2\, d\Hk_E(y)\, \frac{dt}{t} \leq Cr^k\qquad\text{for all $x \in E$ and $r>0$},
\end{align}
then $E$ has big pieces of bilipschitz images of subsets of $\R^k$.
\end{theorem}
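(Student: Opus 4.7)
The plan is to import the corona/stopping-time machinery of David and Semmes from \cite{MR1113517} and to re-run it with Heisenberg $k$-planes $\V^k$ in place of affine Euclidean $k$-planes. Fix $x_0\in E$ and, by scale-invariance of \eqref{oletus}, normalise the scale so that $r_0=1$. The objective is a single bilipschitz map $f:A\to\HE^n$ with $A$ a subset of $B^{d_e}_{\R^k}(0,1)$ such that $\Hk_E(f(A)\cap B_E(x_0,1))\geq c$ for a constant $c$ depending only on $C_{(E,d)}$ and the Carleson constant $C$ from \eqref{oletus}.

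First, apply \eqref{oletus} and Chebyshev to pass to a subset $E_0\subset B_E(x_0,1)$ with $\Hk_E(E_0)\geq \tfrac12 \Hk_E(B_E(x_0,1))$ on which the square function
\[
S(y)\;:=\;\int_0^1\beta^E_1(y,t)^2\,\frac{dt}{t}
\]
is bounded by a fixed constant $M_0$. Fix a Christ-style dyadic decomposition of $E$ and associate to each dyadic cube $Q$ a Heisenberg $k$-plane $P_Q\in\V^k$ nearly minimising $\beta^E_1$ at the location and scale of $Q$. Run a stopping-time argument that declares a cube \emph{bad} either when the $S$-average on it is large or when its plane $P_Q$ has deviated significantly from that of a distinguished ancestor; standard Carleson packing, fed by the bound on $S$ over $E_0$, then produces a coherent subregion $R\subset E_0$ whose measure is bounded below by a positive constant, and on which all the planes $P_Q$ stay within a small angular deviation of a single reference plane $P$.

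Inside such a coherent $R$, construct the bilipschitz parametrisation as follows. A Heisenberg $k$-plane $P\in\V^k$ is a homogeneous subgroup whose intrinsic Koranyi metric is bilipschitz equivalent to a Euclidean metric on $\R^k$ in appropriate coordinates, so it is enough to show that the natural Heisenberg projection $\pi_P:R\to P$ is bilipschitz with uniform constants. The two-sided Lipschitz control for $\pi_P$ comes from a telescoping estimate along the dyadic scales between $d(y,y')$ and $1$: coherence of $R$ says that at each such scale the displacement $y^{-1}y'$ lies close, in the $\beta_1$ sense, to the reference plane $P$, and summing these contributions, weighted by the proportions of mass in the relevant balls, forces the non-$P$-component of $y^{-1}y'$ to be negligible compared to $d(y,y')$. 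The pull-back of $\pi_P$ then gives the required bilipschitz map from a subset of $P\cong\R^k$ into $\HE^n$, capturing a definite fraction of $\Hk_E(R)$.

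The principal obstacle is the telescoping step: in $\R^n$ the angle between successive Euclidean approximating planes is controlled by a clean linear-algebraic estimate, but in $\HE^n$ the anisotropic Koranyi dilations force one to track the vertical part of $y^{-1}y'$ separately, since a small horizontal perturbation of a plane can introduce a vertical component that behaves badly under scaling. One must therefore verify that $\beta^E_1$-smallness really controls the correct Heisenberg angle between consecutive planes, and translate this control into a horizontality statement for $y^{-1}y'$ via the group law. This is where the proof departs substantially from its Euclidean model, and where the noncommutative structure of $\HE^n$ enters in an essential way rather than only formally.
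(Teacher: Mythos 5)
There is a genuine gap, and it sits at the centre of your argument. You claim that the stopping-time construction, ``fed by the bound on $S$ over $E_0$,'' produces a \emph{single} coherent region $R\subset E_0$ of measure bounded below, on which all approximating planes stay close to one reference plane $P$, and that the projection $\pi_P$ is then bilipschitz on $R$. Carleson packing does not give this: the corona-type decomposition yields a family $\F$ of stopping-time regions whose tops satisfy a packing condition $\sum\mu(Q(\ST))\leq C\mu(R)$, but no single region need carry a fixed proportion of the measure of $B_E(x_0,1)$ (the top region may stop after boundedly many generations). Worse, your intermediate goal is strictly stronger than the theorem: a proportional-measure set on which the projection to one plane is bilipschitz is a big piece of a ($K$-)Lipschitz graph over $P$, and already in $\R^n$ the hypothesis \eqref{oletus} does not imply big pieces of Lipschitz graphs (uniform rectifiability is known not to imply BPLG; this is exactly why David and Semmes, and this paper, only conclude big pieces of bilipschitz images). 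Even inside one coherent region the projection is bilipschitz only \emph{above the local stopping scale} (this is Lemma~\ref{ledh}: the lower bound $d(x,y)\leq(1+2\delta)d(P(x),P(y))$ requires $d(x,y)>D^{-2}\min\{h_\ST(x),h_\ST(y)\}$); below that scale there is no control, so one cannot simply pull back $\pi_P$.

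The paper's proof supplies precisely the two ingredients your sketch is missing. First, the hypothesis \eqref{oletus} is used to prove a Carleson packing condition for \emph{all} tops $Q(\ST)$ (Lemma~\ref{leF}); the hard case $\F_3$ is handled not by a telescoping estimate on $y^{-1}\cdot y'$ but by building, for each region, an approximating Lipschitz function $g$ over $V_{Q(\ST)}$, comparing its affine-approximation numbers $\gamma(p,t)$ with $\beta_1$, and running a Calder\'on-formula/square-function argument to show that regions which stop because of angle deviation must see a large $\beta_1$ square function (Lemma~\ref{leF3}). Second, the bilipschitz map onto a big piece is not a projection to a single plane: it is assembled recursively over the whole tree of bad cubes, tops, and minimal cubes, by assigning well-separated Euclidean cubes $t(Q)$ in $\R^k$ and using contracted projections $\phi_Q\circ P_{W_Q}$ within each region, after discarding thin boundary layers $\sigma(Q)$ and points lying under too many tops. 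Your proposal also leaves its own key step (the ``telescoping'' control of the vertical component) explicitly unverified; but even if that were done, the single-region, single-plane strategy would still not yield the theorem, whereas the Heisenberg-specific issues you flag are in fact dispatched in the paper by the elementary projection estimates of Section~2 (e.g.\ $d(x,P_V(x))\leq 3d(x,V)$ and the description of $\kulma{V}{W}$ via horizontal projections).
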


The proof given here follows \cite{MR1113517}. A similar method is applied also in \cite{MR1709304}.
For readability and consistency we give a quite detailed proof although mostly the adaptation from \cite{MR1113517} is trivial or at least straightforward.

In this article $|\cdot|$ denotes the euclidean $k$-norm for any $k$ in question.
The cardinality of a finite set $X$ is denoted by $\#X$.
Further $\mathcal{P}(X)=\{ Y\, :\, Y\subset X \}$ for any set $X$, and the symbol $\dashint$ is used to denote an average integral.

\section{Some notations and preliminaries on Heisenberg groups}
\label{secH}

The Heisenberg group $\HE^n$ is the unique simply connected and connected Lie group of step two and dimension $2n+1$
with one dimensional center. As a set it may be identified with $\R^{2n+1}$. The points $x \in \HE^n$ are written as 
$x = (x',x_{2n+1})$ with $x' \in \R^{2n}$ and $x_{2n+1} \in \R$. The group operation is given by
\[ x \cdot y = ( x' + y' , x_{2n+1} + y_{2n+1} + 2A(x',y')), \]
where
\[ A(x',y') = \sum_{i=1}^n ( x_{i+n}y_i - x_iy_{i+n} ). \]
Note that the inverse of $x$, denoted also by $x^{-1}$, is $-x = (-x',-x_{2n+1})$ and the neutral element is $(0,0)$.
We equip $\HE^n$ by a metric $d$ defined by
\begin{align}
\label{d}
  d(x,y) = \lVert y^ {-1} \cdot x \rVert,\qquad\text{where $\lVert x \rVert = (|x'|^4 + x_{2n+1}^2)^{1/4}$}.
\end{align}
The metric $d$ is left invariant i.e. for each $p\in\HE^ n$ the left translation $\tau_p:x\mapsto p\cdot x$ is
an isometry from $(\HE^n,d)$ to itself.
Note that for $x,y\in\R^{2n} \times \{ 0 \}$ the conditions
$A(x',y') = 0$, $x\cdot y = x + y$ and $d(x,y) = |x-y|$
are equivalent.

A linear subspace $V\subset\R^{2n}$ is said to be \emph{isotropic} if $A(x,y)=0$ for all $x,y\in V$.
For $k \in \N$ denote
\begin{align*}
  \V_0^k = \left\{\, V\times\{ 0\} \, :\, \text{$V$ is an isotropic $k$-dimensional linear subspace of $\R^{2n}$}\, \right\}.
\end{align*}
In other words $\V_0^k$ is the collection of the $k$-dimensional homogenous horizontal subgroups of $\HE^n$ (see \cite{MR2955184}).
We note that $\V_0^k \neq \emptyset$ if and only if $0 \leq k \leq n$.
Set
\begin{align}
\label{V}
  \V^k = \left\{\, \tau_p(V)\, :\, \text{$V \in \V_0^ k$ and $p \in \HE^n$}\, \right\}. 
\end{align}
Each $V \in \V^k$ is a $k$-dimensional affine subspace of $\R^{2n+1}$ because $\tau_p$ is an affine mapping whose linear part has determinant $1$.
Note also that for any $V\in\V^k$
\begin{align}
\label{dV}
  d(x,y) = |x'-y'|\qquad\text{for all $x,y\in V$}.
\end{align}
Namely, let $V=\tau_p(V_0)$ for $V_0\in\V_0^k$, $x=p\cdot x_0$ and $y=p\cdot y_0$. Since $(p\cdot z)' = p' + z'$ for any $z\in\HE^n$,
one has $d(x,y) = d(x_0,y_0) = |x_0'-y_0'| = |x'-y'|$.

For $V \in \V^k$ we define the projection $P_V : \HE^n \to V$ by setting
\begin{align*}
   P_V = \tau_{p} \circ P^e_{\tau_{-p}(V)} \circ \tau_{-p},
\end{align*}
where $p \in V$ and $P^e_L:\HE^n\to L$ is the euclidean orthogonal projection to the linear subspace $L \in \V^k_0$ (called \emph{horizontal projection} in \cite{MR2789472}).
Note that the definition of $P_V$ is correct, because letting $V_0 \in \V_0^k$ and $x\in\HE^n$
\[ P^e_{V_0}(a\cdot x) = P^e_{V_0}(a'+x',0) = P^e_{V_0}(a',0) + P^e_{V_0}(x',0) = P^e_{V_0}(a) \cdot P^e_{V_0}(x) \]
for every $a\in\HE^n$, and hence
\begin{align*}
  p \cdot v \cdot P^e_{V_0}((p\cdot v)^{-1} \cdot x) = p \cdot v \cdot (P^e_{V_0}(-v) \cdot P^e_{V_0}(p^{-1} \cdot x))
  = p \cdot P^e_{V_0}(p^{-1}\cdot x).
\end{align*}
for any $p\in\HE^n$ and $v\in V_0$.
It is easy to see (using the left invariance of $d$) that $P_V$ is 1-Lipschitz for any $V\in\V^k$.

\begin{remark}\rm
\label{remProttr}
A linear map $\varphi:\HE^n\to\HE^n$ is called a \emph{rotation}
if $\varphi(x)_{2n+1} = x_{2n+1}$, $A(x',y')=A(\varphi(x)',\varphi(y)')$ and $|\varphi(x)'-\varphi(y)'| =|x'-y'|$ for all $x$ and $y$.
If $\varphi$ is a rotation then clearly $\varphi(x\cdot y)=\varphi(x)\cdot\varphi(y)$
and $\varphi(V) \in \V^k$ for any for any $x,y\in\HE^n$ and $V\in\V^k$. Hence
\[ P_{\tau_p\circ\varphi(V)}\circ\tau_p\circ\varphi = \tau_p\circ\varphi\circ P_V \]
for any $p\in\HE^n$, $V\in\V^k$ and rotation $\varphi$.
Namely, if $V=\tau_q(V_0)$ for $q\in\HE^n$ and $V_0\in\V_0^k$ then for $x\in\HE^n$
\begin{align*}
  &P_{\tau_p\circ\varphi(V)}(p\cdot\varphi(x)) = P_{p\cdot\varphi(q)\cdot\varphi(V_0)}(p\cdot\varphi(x)) \\
  &= p\cdot\varphi(q)\cdot P^e_{\varphi(V_0)}((p\cdot\varphi(q))^{-1}\cdot p\cdot\varphi(x))
  = p\cdot\varphi(q)\cdot P^e_{\varphi(V_0)}(\varphi(x)-\varphi(q))  \\
  &= p\cdot\varphi(q)\cdot \varphi(P^e_{V_0}(x-q))
  = p\cdot\varphi(q\cdot P^e_{V_0}(x-q))
  = p\cdot\varphi(P_V(x)).
\end{align*}
For any $V_0,W_0\in\V_0^k$ there is a rotation $\varphi$ such that $W_0=\varphi(V_0)$ (see \cite{MR2955184}).
Hence for any $V,W\in\V^k$ there is a rotation~$\varphi$ and $p\in\HE^n$ such that $W=\tau_p\circ\varphi(V)$.
Notice that the rotations are isometries.
\end{remark}

Denote $X_k = \{\, x\in\HE^n\, :\, \text{$x_i=0$ for all $i>k$}\, \}$.

\begin{lemma}
\label{lePdist}
For any $x\in\HE^n$ and $V\in\V^k$
\[ d(x,P_V(x)) \leq 3d(x,V). \]
\end{lemma}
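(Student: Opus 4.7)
The plan is to use the left-invariance of $d$ to reduce to the case where $V$ is a homogeneous horizontal subgroup, and then estimate the Kor\'anyi norm of the resulting group element by exploiting the isotropy of $V_0$.

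Fix any $y\in V$ and write $V=\tau_y(V_0)$ with $V_0\in\V_0^k$. Since the definition of $P_V$ is independent of the choice of base point, $P_V(x)=y\cdot P^e_{V_0}(y^{-1}\cdot x)$. Setting $z=y^{-1}\cdot x$, the left-invariance of $d$ gives
$$d(x,P_V(x)) = d(z,P^e_{V_0}(z)) = \lVert P^e_{V_0}(z)^{-1}\cdot z\rVert.$$
Since $y\in V$ was arbitrary, it suffices to prove $\lVert P^e_{V_0}(z)^{-1}\cdot z\rVert\leq 3\lVert z\rVert$ for every $z\in\HE^n$ and $V_0\in\V_0^k$; taking the infimum over $y\in V$ then yields $d(x,P_V(x))\leq 3d(x,V)$.

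For the estimate, write $u=P^e_{V_0}(z')\in V_0$ and $w=z'-u$, so that $w$ is euclidean-orthogonal to $V_0$. The group law gives
$$P^e_{V_0}(z)^{-1}\cdot z = (-u,0)\cdot(z',z_{2n+1}) = (w,\, z_{2n+1}-2A(u,z')).$$
By the isotropy of $V_0$ we have $A(u,u)=0$, so $A(u,z')=A(u,w)$, and therefore $|A(u,z')|\leq |u||w|\leq |z'|^2\leq\lVert z\rVert^2$. Combining this with $|z_{2n+1}|\leq\lVert z\rVert^2$ and $|w|\leq\lVert z\rVert$, I obtain
$$\lVert P^e_{V_0}(z)^{-1}\cdot z\rVert^4 = |w|^4 + (z_{2n+1}-2A(u,z'))^2 \leq \lVert z\rVert^4 + (3\lVert z\rVert^2)^2 = 10\lVert z\rVert^4,$$
which is far less than $81\lVert z\rVert^4=(3\lVert z\rVert)^4$.

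The only real subtlety lies in controlling the vertical component of $P^e_{V_0}(z)^{-1}\cdot z$: \emph{a priori} the cross term $A(u,z')$ could be of order $|u|\,|z'|$, which is merely $\lVert z\rVert^2$ without the benefit of the transverse factor $|w|$ being small. The isotropy of $V_0$ is exactly what allows one to replace $z'$ by $w$ inside $A(u,\cdot)$, so that both factors are controlled by $\lVert z\rVert$ and the Kor\'anyi quartic scaling closes.
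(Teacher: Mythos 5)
Your proof is correct: the reduction via left invariance and the base-point independence of $P_V$ is legitimate (for every $y\in V$ one has $V=\tau_y(V_0)$ with the same $V_0$, and $P_V(x)=y\cdot P^e_{V_0}(y^{-1}\cdot x)$ is exactly the paper's definition), the group-law computation $(-u,0)\cdot(z',z_{2n+1})=(w,\,z_{2n+1}-2A(u,z'))$ is right, and the componentwise estimate gives $\lVert P^e_{V_0}(z)^{-1}\cdot z\rVert\le 10^{1/4}\lVert z\rVert<3\lVert z\rVert$, after which taking the infimum over $y\in V$ finishes the proof. Your route shares the same engine as the paper's --- compare $d(x,P_V(x))$ with $d(x,u)$ for an arbitrary competitor $u\in V$ via the Kor\'anyi norm, with the key cross-term bound being Cauchy--Schwarz for the symplectic form (your $|A(u,z')|\le|u|\,|w|$ is the paper's $|T-U|\le 2|z-u|\,|y|$) --- but the execution differs: the paper first rotates $V$ onto the model subgroup $X_k$ using Remark~\ref{remProttr}, writes everything in coordinates, and then minimizes the quadratic $f(t)$ over the vertical coordinate, which buys the slightly sharper comparison $d(x,P_{X_k}(x))^4\le 3\,d(x,u)^4$; you avoid both the rotation normalization and the optimization in $t$, working coordinate-free with a general isotropic $V_0$ and settling for the cruder but entirely sufficient constant $10^{1/4}$. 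One small remark: the identity $A(u,u)=0$ that lets you replace $z'$ by $w$ already follows from the antisymmetry of $A$, not from isotropy; where isotropy genuinely enters is in the surrounding framework you invoke (it makes $V_0$ a subgroup, gives $V=\tau_y(V_0)$ for every $y\in V$, and underlies the base-point independence of $P_V$).
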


\begin{proof}
By the left invariance of $d$ this follows  from \cite{MR2789472} (at least with $3$ replaced by some constant).
Let us give here another proof by a direct calculation.
By \ref{remProttr} one only needs show that
$d(x,P_{X_k}(x)) \leq 3d(x,X_k)$ for all $x\in\HE^n$. Let $(z,y)\in\R^k\times\R^{2n-k}$, $u\in X_k$ and $C>1$.
Define $f:\R\to\R$ by setting
$f(t) = Cd((z,y,t),u)^4 - d((z,y,t),P_{X_k}(z,y,t))^4$. Let $t\in\R$. By denoting
$T=-2\sum_{i=1}^ku_iy_{n-k+i}$ and $U=-2\sum_{i=1}^kz_iy_{n-k+i}$ we have
\[ f(t) = C\left(|z-u|^2+|y|^2\right)^2+C(t-T)^2-|y|^4-(t-U)^2. \]
Now
\begin{align*}
  f(t) \geq f\left(\frac{CT-U}{C-1}\right) = C\left(|z-u|^2+|y|^2\right)^2 - \frac{C(T-U)^2}{C-1} - |y|^4.
\end{align*}
Since
\begin{align*}
  |T-U| = 2\left|\sum_{i=1}^k(z_i-u_i)y_{n-k+i}\right| \leq 2|z-u||y| \leq \left(|z-u|^2+|y|^2\right), 
\end{align*}
we have
\begin{align*}
  f(t) \geq \frac{C(C-2)}{C-1}\left(|z-u|^2+|y|^2\right)^2-|y|^4 \geq 0
\end{align*}
by choosing $C\geq 3$.
\end{proof}

For $V,W\in\V^k$ denote
\[ \kulma{V}{W} = \min\{\, C \geq 1\, : \text{$d(x,y) \leq Cd(P_W(x),P_W(y))$ for all $x,y\in V$}\, \}. \]
Let $P^e_L$ denote also the euclidean orthogonal projection from $\R^{2n}$ to an affine subspace $L\subset\R^{2n}$.
For any $Y\subset\HE^n$ we write $Y'=\{ x'\, :\, x \in Y \}$. 

Let $V=\tau_p(V_0)$ for $p\in\HE^n$ and $V_0\in\V_0^k$. Then  and
\[ P_V(x)'=(p\cdot P^e_{V_0}(x-p))' = p'+P^e_{V_0}(x-p)' = p'-P^e_{V_0}(p)'+P^e_{V_0}(x)' \]
for any $x$ (by the linearity of $P^e_{V_0})$.
Thus, since $V' = p' + V_0'$, we have by \eqref{dV}
\begin{align}
\label{PV}
  d(P_V(x),P_V(y))=|P_V(x)'-P_V(y)'|=|P^e_{V_0}(x)'-P^e_{V_0}(y)'|=|P^e_{V'}(x')-P^e_{V'}(y')|
\end{align}  
for any $x,y\in\HE^n$.
Particularly the following equality holds.

\begin{lemma}\rm
\label{lekulmat}
  Let $V,W\in\V^k$. Then
  \[ \kulma{V}{W}=\min\{\, C \geq 1\, : \text{$|x-y| \leq C|P^e_{W'}(x)-P^e_{W'}(y)|$ for all $x,y\in V'$}\, \}. \]
\end{lemma}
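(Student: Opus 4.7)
The plan is to rewrite both occurrences of $d(\cdot,\cdot)$ in the definition of $\kulma{V}{W}$ using the two identities already established, namely \eqref{dV} for distances inside $V$ and \eqref{PV} for distances between projections onto $W$. Specifically, for $x,y \in V$ we have $d(x,y) = |x'-y'|$, and for the same $x,y$ (viewed as arbitrary points of $\HE^n$) we have $d(P_W(x),P_W(y)) = |P^e_{W'}(x') - P^e_{W'}(y')|$. So the inequality $d(x,y) \leq C\, d(P_W(x),P_W(y))$ for all $x,y \in V$ is \emph{the same} inequality as $|x'-y'| \leq C\,|P^e_{W'}(x') - P^e_{W'}(y')|$ for all $x,y \in V$.

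It remains to observe that as $x$ ranges over $V$, the primed coordinate $x'$ ranges over all of $V'$, and conversely every $\xi \in V'$ arises as $x'$ for a unique $x \in V$. Writing $V = \tau_p(V_0)$ with $V_0 \in \V_0^k$, one has $V' = p' + V_0'$ (a $k$-dimensional affine subspace of $\R^{2n}$), and the map $V \to V'$, $x\mapsto x'$ is bijective because elements of $V_0$ have vanishing last coordinate so their primed parts determine them. Hence the universal quantifier ``for all $x,y \in V$'' on the left translates verbatim into ``for all $x,y \in V'$'' on the right, and the two defining sets of admissible constants $C$ coincide. Taking minima gives the claimed equality.

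There is no real obstacle here; the lemma is essentially a bookkeeping consequence of \eqref{dV} and \eqref{PV}. The only thing to be explicit about is the bijectivity of $V \ni x \mapsto x' \in V'$, which follows directly from the structure of elements of $\V_0^k$ as $V_0' \times \{0\}$, together with left-translation by~$p$.
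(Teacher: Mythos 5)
Your proposal is correct and follows essentially the same route as the paper: the paper derives \eqref{PV} precisely so that, combined with \eqref{dV}, the lemma becomes the bookkeeping statement you describe (the paper even leaves it at ``particularly the following equality holds''). Your explicit remark that $x\mapsto x'$ is a bijection from $V$ onto $V'$ — surjectivity being what is actually needed to transfer the quantifier — is a harmless and correct elaboration of what the paper leaves implicit.
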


\section{Beta numbers and dyadic cubes}
\label{secbd}

Let $k\in\{1,\dotsc,n\}$. From this on we assume that $E$ is a $k$-regular subset of $\HE^n$.
Denote $B(x,r) = B^d_{\HE^n}(x,r)$ and $\mu = \Hk|_E$, where $\Hk$ is the $k$-dimensional Hausdorff measure on $\HE^n$ (with respect the metric $d$).
By \cite{MR1096400} there exist constants $\alpha, D \in ]1,\infty[$ (depending only on $k$ and the regularity constant $C_E$)
and a collection $\Delta^* = \bigcup_{j\in\Z} \Delta_j \subset \mathcal{P}(E)$ such that each $Q\in\Delta^*$ is open in $E$ and
\begin{align} 
\label{D1}  
  &\text{$\mu\biggl(E\backslash \bigcup_{Q\in\Delta_j} Q\biggr) = 0$ for all $j\in\Z$.}  \\
\label{D2}  
  &\text{If $Q, R \in \Delta_j$ and $Q \neq R$, then $Q \cap R = \emptyset$.}  \\
\label{D3}  
  &\text{If $Q\in\Delta_j$, $R\in\Delta_l$ and $j \leq l$, then $Q \subset R$ or $Q \cap R = \emptyset$.}  \\
\label{D4}  
  &\text{$d(Q) \leq D\alpha^j$ for all $Q\in\Delta_j$.}  \\
\label{D5}  
  &\text{If $Q \in \Delta_j$, then $B(x,D^{-1}\alpha^j) \cap E \subset Q$ for some $x\in E$.}  \\
\label{D6}  
  &\text{$\mu\left(\{ x\in Q\, :\, d(x,E\backslash Q) \leq t\alpha^j \}\right) \leq Dt^{1/D}\mu(Q)$ for all $Q\in\Delta_j$, $t > 0$.}
\end{align} 
By \eqref{D4} and \eqref{D5} also $D^{-k}C_E^{-3}\alpha^{jk} \leq \mu(Q) \leq C_E D^k\alpha^{jk}$  for $Q\in\Delta_j$
if $\alpha^j \leq Dd(E)$. Thus by defining $J_0 = \inf\{ j \, :\, \Delta_j = \{E\} \}$ (here $J_0 = \infty$ if $d(E) = \infty$) and
$\ZZ = \{ j\in\Z\, :\, j\leq J_0 \}$
and taking $D$ larger we can assume that
\begin{align}
\label{D}
  D^{-1}\alpha^j \leq d(Q) \leq D\alpha^j \quad\text{and}\quad D^{-1}\alpha^{jk} \leq \mu(Q) \leq D\alpha^{jk}
\end{align}
for all $Q\in\Delta_j$, $j\in\ZZ$. Set $\Delta = \bigcup_{j\in\ZZ} \Delta_j$.

If $(X,\rho) \in \{(\HE^n,d),(\R^k,d_e) \}$ we write $Z(r) = \{  x\in X\, :\, \rho(x,Z) \leq r\}$ for any $Z\subset X$ and $r>0$.
We further denote
\begin{align*}
  \lambda Q &= Q((\lambda-1)d(Q)) \cap E, \\
  \lambda F &= Q((\lambda-1)d_e(F))
\end{align*}
for any $Q\subset E$, $F\subset\R^k$ and $\lambda > 1$.
Each constant in this article may depend on $k$ and $C_E$ without special mention.
For future let $\varepsilon$ and $\delta$ be small positive constants and $K_0$ and $K$ large constants.
We will fix $K_0$ first, $\delta$ second and $\varepsilon$ after $K$. 
The constants $C$ in Sections~\ref{secbd}--\ref{secF3} depend on
$\varepsilon$, $K$, $\delta$ or $K_0$ only if it is separately mentioned.
Eventually every constant will depend only on $k$, $C_E$ and $C$ from \eqref{oletus}.

For $x\in \HE^n$, $t>0$ and $F \subset E$ with $d(F)>0$ denote
\begin{align}
\label{beta1}
  \beta_1(x,t) = \beta^E_1(x,t) &=  t^{-k-1}\inf_{V\in\V^k} \int_{B(x,t)} d(y,V)\, d\mu y, \\
\notag 
  \beta_\infty(F) &= d(F)^{-1}\inf_{V\in\V^k}\sup\left\{\, d(y,V)\, :\, y\in F\, \right\}.
\end{align}
We say that $E$ satisfies the \emph{weak geometric lemma} if for each $\lambda_1>0$ and $\lambda_2 > 1$
there is a constant $C(\lambda_1,\lambda_2)$ such that
\begin{align}
\label{wgl}
  \sum_{j\in\ZZ}\sum_{\substack{Q\in\Delta_j \\ Q\subset R \\ \beta_\infty(\lambda_2 Q) > \lambda_1}} \mu(Q) \leq C(\lambda_1,\lambda_2)\mu(R)
  \qquad\text{for all $R\in\Delta$}.
\end{align}
Denote
\begin{align*}
  \G_1 = \left\{\, Q\in\Delta\, :\, \text{$KQ \subset V(\varepsilon^2 d(Q))$ for some $V\in\V^k$}\, \right\}.
\end{align*}
Clearly \eqref{wgl} implies
\begin{align}
\label{C0}
  \sum_{j\in\ZZ}\sum_{\substack{Q\in\Delta_j\backslash\G_1 \\ Q\subset R}} \mu(Q) \leq C(\varepsilon,K)\mu(R)
  \qquad\text{for all $R\in\Delta$}.
\end{align}
Note also that \eqref{oletus} implies \eqref{wgl} (and hence \eqref{C0}).
For the proof see for example \cite{MR1113517}.
(This clearly remains valid even if $\HE^n$ and $\V^k$ are replaced by any $k$-regular metric space $(X,\rho)$
and $\A\subset \mathcal{P}(X)$ with $\inf\{ \rho(x,V)\, :\, V\in\A \} = 0$ for all $x\in E$.)
For each $Q\in\G_1$ we let $V_Q\in\V^k$ be such that $KQ \subset V_Q(\varepsilon^2 d(Q))$.

\begin{lemma}
\label{leL}
  There is a constant $c>0$ such that for any $Q\in\G_1$ there exists
  $\{y_0,\dotsc,y_k\} \subset V_Q \cap Q(\varepsilon^2 d(Q))$ such that
  $d(y_{i+1},L_i) > cd(Q)$ for all $i\in\{0,\dotsc,k-1\}$,
  where $L_i \subset V_Q$ is the $i$-dimensional affine subspace with $\{y_0,\dotsc,y_i\} \subset L_i$.
\end{lemma}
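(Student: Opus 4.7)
The plan is to build the $y_i$ inductively using a projection onto $V_Q$ combined with the Ahlfors regularity of $E$. Since $Q\subset KQ\subset V_Q(\varepsilon^2 d(Q))$ and $V_Q$ is closed, for every $x\in Q$ there is a point $\pi(x)\in V_Q$ with $d(x,\pi(x))\leq\varepsilon^2 d(Q)$, and then $\pi(x)\in V_Q\cap Q(\varepsilon^2 d(Q))$. Fix an arbitrary $x_*\in Q$ and start the construction with $y_0=\pi(x_*)$.

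Suppose now that $y_0,\dotsc,y_i\in V_Q\cap Q(\varepsilon^2 d(Q))$ have already been chosen, affinely independent, with $d(y_{j+1},L_j)>cd(Q)$ for every $j<i$, and that $0\leq i\leq k-1$. Let $L_i$ be the $i$-dimensional affine hull of $\{y_0,\dotsc,y_i\}$ inside $V_Q$, and set
\[ S_i=\{\, x\in Q\, :\, d(\pi(x),L_i)\leq cd(Q)\, \}. \]
If $\mu(S_i)<\mu(Q)$, I pick any $x\in Q\setminus S_i$ and set $y_{i+1}=\pi(x)$. Then $d(y_{i+1},L_i)>cd(Q)$, the enlarged family remains affinely independent, and the induction continues until $y_k$ is produced. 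Thus the lemma reduces to the estimate $\mu(S_i)<\mu(Q)$ for a constant $c>0$ depending only on $k$ and $C_E$.

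The only real work, and the step where I expect the main subtlety, is the covering estimate on $L_i$. For $x\in S_i$ one has $d(x,L_i)\leq d(x,\pi(x))+d(\pi(x),L_i)\leq(\varepsilon^2+c)d(Q)\leq 2cd(Q)$, provided $\varepsilon^2\leq c$. Since $V_Q\in\V^k$, equation \eqref{dV} says that the Kor\'anyi metric on $V_Q$ coincides with the Euclidean metric (through $x\mapsto x'$), so $L_i$ is isometric to a standard $i$-dimensional Euclidean affine subspace of $V_Q$. Because $Q$ has diameter $\leq d(Q)$, the portion of $L_i$ relevant to us sits in a set of diameter $O(d(Q))$, and a standard Euclidean covering therefore covers $L_i(2cd(Q))\cap Q$ by at most $Cc^{-i}$ Kor\'anyi balls of radius $O(cd(Q))$. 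The $k$-regularity of $E$ then gives
\[ \mu(S_i)\leq\mu\bigl(Q\cap L_i(2cd(Q))\bigr)\leq Cc^{k-i}d(Q)^k, \]
while \eqref{D} supplies $\mu(Q)\geq D^{-k-1}d(Q)^k$. Since $k-i\geq 1$, choosing $c$ small enough (depending only on $k$ and $C_E$, and then imposing $\varepsilon^2\leq c$ when $\varepsilon$ is fixed later) forces $\mu(S_i)<\mu(Q)$, completing the induction.
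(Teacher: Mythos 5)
Your proof is correct and follows essentially the same route as the paper: an inductive construction where the key step is covering the $cd(Q)$-tube around the $i$-dimensional plane $L_i$ (using \eqref{dV} to treat $L_i$ as Euclidean) by roughly $c^{-i}$ balls of radius $\sim cd(Q)$, and playing the upper Ahlfors-regularity bound against the lower bound $\mu(Q)\gtrsim d(Q)^k$ from \eqref{D}; the paper merely phrases this as a contradiction rather than via your set $S_i$. The one detail you gloss over --- the covering balls are centered on $L_i$, not on $E$, so one should re-center each ball at a point of $E$ before applying regularity --- is handled in the paper by picking $x(y)\in B(y,\cdot)\cap E$, and is harmless.
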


\begin{proof}
Choose some $x_0 \in Q$ and take $L_0=\{y_0\} \subset V_Q$ such that $d(x_0,y_0) = d(x_0,V_Q) \leq \varepsilon^2 d(Q)$.
Assume now that $i<k$ and the $i$-dimensional affine subspace $L_i \subset V_Q$ is defined.
Suppose to the contrary that
$V_Q \cap Q(\varepsilon^2 d(Q)) \subset L_i(c d(Q))$, where $c>0$ is a constant determined later.
Then $Q \subset L_i((c+\varepsilon)d(Q))$.
Denote $F = L_i \cap Q((c+\varepsilon)d(Q))$ and let $H$ be a maximal subset of $F$ such that $d(z,w) > ad(Q)$ for distinct $z,w\in H$,
where $a>0$ is a constant fixed later.
Then
\[ Q \subset F((c+\varepsilon)d(Q)) \subset \bigcup_{y\in H} B(y,(c+\varepsilon+a)d(Q)) \]
and
\[ \#H\cdot \left(\frac{a d(Q)}{2}\right)^i \leq (1+2(c+\varepsilon))^id(Q)^i \]
by \eqref{dV}.
Picking $x(y)\in B(y,(c+\varepsilon+a)d(Q))$ for each $y\in H$ we get
\begin{align*}
  \mu(Q) &\leq \sum_{y\in H} \mu(B(x(y),2(c+\varepsilon+a)d(Q)) \leq \#H\cdot C_E 2^ k(c+\varepsilon+a)^kd(Q)^k  \\
  &\leq \left(\frac{2+4(c+\varepsilon)}{a}\right)^i C_E 2^k(c+\varepsilon+a)^kd(Q)^k.
\end{align*}
By choosing $a$ and $c$ suitably (depending only on $k$ and $C_E$) and then $\varepsilon>0$ small enough one gets the contradiction
with \eqref{D}.
\end{proof}

\begin{lemma}
\label{leQP}
  If $Q\in\G_1$ and $V\in\V^k$ is such that $Q \subset V(2K\varepsilon^2 d(Q))$,
  then $\kulma{V_Q}{V} \leq 1 + \varepsilon$.
\end{lemma}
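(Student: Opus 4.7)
The plan is to use Lemma~\ref{lekulmat} to reduce the claim to a statement about the Euclidean angle between the linear parts of $V_Q'$ and $V'$. The key point is that the hypothesis $Q\subset V(2K\varepsilon^2 d(Q))$ and the defining property of $V_Q$ together give a family of points in $V_Q$ that all lie close to $V$; projecting these points to $V$ converts a well-conditioned basis of the linear part of $V_Q'$ into a nearby basis of the linear part of $V'$, and the angle estimate follows.

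In detail, I would first apply Lemma~\ref{leL} to obtain $y_0,\ldots,y_k\in V_Q\cap Q(\varepsilon^2 d(Q))$ with $d(y_{i+1},L_i)>cd(Q)$. Since each $y_i$ lies within $\varepsilon^2 d(Q)$ of $Q$ and $Q\subset V(2K\varepsilon^2 d(Q))$, the triangle inequality gives $d(y_i,V)\leq 3K\varepsilon^2 d(Q)$. Setting $v_i=P_V(y_i)$, Lemma~\ref{lePdist} yields $d(y_i,v_i)\leq 9K\varepsilon^2 d(Q)$, and the pointwise bound $|x'|\leq\|x\|$ gives $|y_i'-v_i'|\leq 9K\varepsilon^2 d(Q)$; from the computation preceding \eqref{PV} we also have $v_i'=P^e_{V'}(y_i')$. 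Now put $e_i=y_i'-y_0'$ and $\tilde e_i=v_i'-v_0'$, so $|e_i-\tilde e_i|\leq 18K\varepsilon^2 d(Q)$. By \eqref{dV} the spread condition on the $y_i$ translates into $|e_{i+1}-P^e_{\lhull(e_1,\ldots,e_i)}(e_{i+1})|>cd(Q)$, so $\{e_1,\ldots,e_k\}$ is a basis of the linear part $W_Q$ of $V_Q'$ for which every $w=\sum_i a_ie_i$ satisfies $\sum_i|a_i|\leq C|w|/d(Q)$, with $C$ depending only on $k$ and $C_E$.

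By Lemma~\ref{lekulmat} it suffices to bound $|P^e_W(w)|$ from below by $|w|/(1+\varepsilon)$ for every $w\in W_Q$, where $W$ is the linear part of $V'$. Writing $w=\sum_i a_ie_i$ and $w'=\sum_i a_i\tilde e_i\in W$,
\[
  |w-P^e_W(w)|\leq|w-w'|\leq 18K\varepsilon^2 d(Q)\sum_i|a_i|\leq 18CK\varepsilon^2|w|,
\]
so $|P^e_W(w)|^2\geq(1-(18CK\varepsilon^2)^2)|w|^2\geq|w|^2/(1+\varepsilon)^2$ once $\varepsilon$ is chosen small enough in terms of $K$, which is admissible since $\varepsilon$ is fixed after $K$. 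The main obstacle is the quantitative linear-algebra step of extracting a well-conditioned basis from the geometric spread of Lemma~\ref{leL}; once the bounds $|e_i-\tilde e_i|\leq 18K\varepsilon^2 d(Q)$ and $\sum_i|a_i|\leq C|w|/d(Q)$ are in hand, the conversion from Heisenberg to Euclidean data via \eqref{dV} and \eqref{PV} and the final angle estimate are routine.
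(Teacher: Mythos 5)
Your proposal is correct and follows essentially the same route as the paper: the spread points from Lemma~\ref{leL}, their proximity to $V$ combined with Lemma~\ref{lePdist}, the transfer to Euclidean data via \eqref{PV} and \eqref{dV}, and the conclusion through Lemma~\ref{lekulmat}, with $\varepsilon$ chosen small depending on $K$. The only difference is cosmetic: you spell out the well-conditioned-basis argument extending the estimate from the points $y_0,\dotsc,y_k$ to all of $V_Q'$, a step the paper leaves implicit.
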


\begin{proof}
Let $\{y_0,\dotsc,y_k\} \subset V_Q \cap Q(\varepsilon^2 d(Q))$ as in Lemma~\ref{leL}.
Then $\{y_0,\dotsc,y_k\} \subset V((1+2K)\varepsilon^2 d(Q))$ and $d(y_i,y_j) > cd(Q)$ for all distinct $i,j\in\{0,\dotsc,k\}$.
Thus by Lemma~\ref{lePdist}
\begin{align*}
  d(y_i,y_j) &\leq d(P_V(y_i),P_V(y_j)) + d(y_i,P_V(y_i)) + d(y_j,P_V(y_j))  \\
  &\leq d(P_V(y_i),P_V(y_j)) + 6(1+2K)\varepsilon^2 d(Q)  \\
  &\leq d(P_V(y_i),P_V(y_j)) + 6(1+2K)c^{-1}\varepsilon^2 d(y_i,y_j)
\end{align*}
for all $i,j\in\{1,\dotsc,k\}$, where $c$ is as in Lemma~\ref{leL}.
Choosing $\varepsilon$ small enough depending on $K$ (and $c$) we get by \eqref{PV} and \eqref{dV} that
$|x-y| \leq (1+\varepsilon)|P^e_{V'}(x)-P^e_{V'}(y)|$ for all $x,y\in V_Q'$.
The claim now follows from Lemma~\ref{lekulmat}.
\end{proof}

\section{Stopping time regions}
\label{secF}

In this section we mostly follow \cite[Sections 7 and 8]{MR1113517}. We use the same assumptions and notations as in Section~\ref{secbd}
assuming additionally that $E$ satisfies the weak geometric lemma~\eqref{wgl}.
For any $Q\in\Delta^*$ denote $\CH(Q) = \{ R\in\Delta_{j_Q-1}\, :\, R \subset Q \}$, where $j_Q=\min\{\, j\in\Z\, :\, Q\in\Delta_j\, \}$.
If $j_Q < J_0$, we also denote by $O(Q)$ the unique $R\in\Delta^*$ for which $Q\in\CH(R)$.
For any $\ST\subset\Delta$ let $\min(\ST)$ be the set of minimal (with respect to inclusion) cubes in $\ST$.

\begin{lemma}
There is $\G\subset\G_1$ and $\F\subset\mathcal{P}(\G)$ such that $\G = \bigcup_{\ST\in\F}\ST$ and
the following conditions are satisfied:
\begin{itemize}
\item[\rm(F1)] For all $R\in\Delta$
  \[ \sum\limits_{j\in\ZZ}\sum\limits_{\substack{Q\in\Delta_j\backslash\G \\ Q\subset R}} \mu(Q) \leq C(\varepsilon,K)\mu(R). \]
\item[\rm(F2)]
  If $\ST_1,\ST_2\in\F$ and $\ST_1\neq\ST_2$, then $\ST_1\cap\ST_2 = \emptyset$.
\item[\rm(F3)]
  Each $\ST\in\F$ has a largest element with respect to inclusion, denoted by $Q(\ST)$.
\item[\rm(F4)]
  If $Q \in \ST$, $R\in\Delta$ and $Q\subset R \subset Q(\ST)$, then $R \in \ST$.
\item[\rm(F5)]
  $\kulma{V_Q}{V_{Q(\ST)}} \leq 1+\delta$ for all $Q\in\ST$.
\item[\rm(F6)]
  If $Q\in\ST$, $\CH(Q)\subset\G$ and $\kulma{V_R}{V_{Q(\ST)}} \leq 1+\delta$ for all $R\in\CH(Q)$, then $\CH(Q)\subset\ST$.
\item[\rm(F7)]
  $Q\in\min(\ST)$ if and only if the following two conditions are satisfied:
  \begin{itemize}
  \item[\textbullet] $Q\in\ST$
  \item[\textbullet] $\CH(Q)\backslash\G\neq\emptyset$ or $\kulma{V_R}{V_{Q(\ST)}} > 1+\delta$ for some $R\in\CH(Q)$
  \end{itemize}
\end{itemize}
\end{lemma}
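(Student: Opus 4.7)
My plan is to build $\F$ by a top-down sweep on the dyadic generations and to let $\G$ be whatever the construction produces. At generation $J_0$ each $Q\in\G_1$ seeds a singleton region $\ST:=\{Q\}$ with $Q(\ST):=Q$. Assume $\F$ has been built through generation $j+1$; for each $Q\in\G_1$ already placed in some $\ST\in\F$ I apply an \emph{all-or-nothing growth rule} on $\CH(Q)$. If every $R\in\CH(Q)$ satisfies $R\in\G_1$ and $\kulma{V_R}{V_{Q(\ST)}}\leq 1+\delta$, then I append all of $\CH(Q)$ to $\ST$; otherwise no child of $Q$ is added to $\ST$ (so $Q$ becomes a minimal cube of $\ST$) and each $R\in\CH(Q)\cap\G_1$ opens a fresh region $\ST_R:=\{R\}$ with $Q(\ST_R):=R$. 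Set $\G:=\bigcup_{\ST\in\F}\ST$. The rule places every cube of $\G_1$ into exactly one region, so $\G=\G_1$.

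The properties (F1)--(F7) then come out by bookkeeping. (F1) is just \eqref{C0}, because $\G=\G_1$. (F2) and (F3) are automatic: each cube is assigned once, and each $\ST$ is born with its seed as an inclusion-maximal element, because children are added only after their parent. For (F4), if $R\in\ST$ is not the seed then the all-or-nothing rule forced $O(R)\in\ST$; iterating upward puts every dyadic ancestor of $R$ inside $Q(\ST)$ into $\ST$, which by nesting covers every $R'\in\Delta$ with $R\subset R'\subset Q(\ST)$. (F5) is built into the admission criterion, with $\kulma{V_{Q(\ST)}}{V_{Q(\ST)}}=1$ taking care of the seed.

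The last two properties are just the two directions of the growth rule. (F6) is the situation in which the rule fires: the hypothesis ``$\CH(Q)\subset\G$ and all angles $\leq 1+\delta$'' combined with $\G\subset\G_1$ forces $\CH(Q)\subset\ST$. (F7) is the negation of the same rule at $Q$: $Q\in\min(\ST)$ iff no child of $Q$ joined $\ST$ iff the growth rule failed there, which, using $\G=\G_1$, is exactly the stated dichotomy.

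I expect no genuine obstacle. All the geometric content has been proved upstream (the existence of the horizontal $k$-plane $V_Q$ for $Q\in\G_1$ and the rigidity Lemmas~\ref{leL} and~\ref{leQP}), as has the Carleson estimate \eqref{C0} for $\G_1$. The only design choice to defend is the all-or-nothing convention on $\CH(Q)$: admitting children selectively would still give (F5), but would break the clean ``iff'' form of (F7) and the converse direction of (F6).
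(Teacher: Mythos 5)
Your growth rule is essentially the paper's stopping--time construction (the paper describes each region $\ST$ as the unique subset of $\G$ below a chosen maximal cube satisfying (F3)--(F7), which is exactly your all-or-nothing rule), but the proposal has two genuine gaps, both concerning the claim ``$\G=\G_1$, so (F1) is just \eqref{C0}''.

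First, your sweep only ever places a cube if its parent has already been placed: fresh regions are opened only at children of placed cubes. Hence a cube of $\G_1$ lying below any ancestor that fails the flatness test is never placed; in the extreme case where the top cube $E\notin\G_1$ (which is the generic situation, since $\G_1$ demands $KQ\subset V(\varepsilon^2 d(Q))$ with $K$ large and $\varepsilon$ small), your construction produces $\G=\emptyset$. So $\G\ne\G_1$ in general, and (F1) then fails outright, because the weak geometric lemma \eqref{C0} controls only the measure of the non-flat cubes themselves, not of their entire subtrees. The repair is what the paper does: iterate over maximal elements of what remains of $\G_1$ (equivalently, let every $\G_1$-cube whose parent is not placed seed a fresh region), so that every cube of $\G$ below a chosen starting cube is eventually assigned.

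Second, your recursion starts ``at generation $J_0$'', but when $E$ is unbounded one has $J_0=\infty$ and $\ZZ=\Z$, so there is no top generation and the induction has no base case. This case is inside the scope of the theorem (the hypothesis \eqref{oletus} is for all $r>0$), and it is precisely why the paper spends the first half of its proof constructing the auxiliary family $\mathcal{D}$ of starting cubes from the balls $B(p,\alpha^j)$ around a fixed point $p\in E$, and then discards from $\G_1$ the cubes not contained in any member of $\mathcal{D}$. The accompanying counting argument (at most boundedly many excluded cubes per generation, each of measure $\lesssim\alpha^{jk}$, summing geometrically) is what restores (F1) with $\G\subsetneq\G_1$. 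Your write-up contains neither the starting family nor this estimate, so as it stands the construction and the verification of (F1) are incomplete; the bookkeeping for (F2)--(F7) is otherwise fine.
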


\begin{proof}
Assume first that $E$ is unbounded.
Let $p\in E$ and set $\mathcal{D} = \min\bigl(\bigcup_{j\in\N}\mathcal{D}_j\bigr)$, where
$\mathcal{D}_j = \left\{\, R\in\Delta_j\, :\, B(p,\alpha^j) \cap O(R)\neq\emptyset\, \right\}$.
Clearly each $Q\in\Delta$ is included in some $R\in\bigcup_{j\in\N}\mathcal{D}_j$.
Let $j\in\Z$ and $Q\in\Delta_j$ be such that $d(p,Q) > (D^3\alpha+1)\alpha^{j}$.
We next show that there exists $R\in\mathcal{D}$ such that $Q\subset R$.
We just let $R$ be the minimal qube in $\bigcup_{i\in\N}\mathcal{D}_i$ such that $Q\subset R$.
Since trivially $\mathcal{D}_0 \subset \mathcal{D}$, we assume $R\in\mathcal{D}_i$ for $i\geq 1$.
Now $i>j$ because $Q\not\in\mathcal{D}_j$. Thus $Q\subset R^*$ for some $R^* \in \CH(R)$.
By the minimality of $R$ we have $B(p,\alpha^{j_R-1}) \cap R = \emptyset$ from which we conclude $R\in\mathcal{D}$.
So by the regularity there is a constant $C$ such that for every $j\in\Z$ there is at most $C$ cubes in $\Delta_j$
which are not contained in
any cube in $\mathcal{D}$. 
If $E$ is bounded we set $\mathcal{D} = \Delta_{J_0}$ (which contains only one qube).

Defining $\G = \G_1 \backslash \{\, Q \in \Delta\, :\, \text{$Q\not\subset R$ for all $R\in\mathcal{D}$}\, \}$ the condition~(F1) holds by
\eqref{C0} and the previous discussion.
For each $R\in\mathcal{D}$ we partition $\G(R) = \{ Q\in\G\, :\, Q \subset R \}$ into a family of
''stopping time regions'' as follows:
Let $Q_0$ be a maximal element in $\G(R)$. The family $\ST$ is defined to be the unique subset of $\G(R)$
whose largest element is $Q_0$ and which satisfies the conditions~(F3)--(F7).
Then we repeat the process for $\G(R)\backslash\ST$.
Since $R_1 \cap R_2 = \emptyset$ for distinct $R_1,R_2\in\mathcal{D}$, the condition~(F2) is satisfied.
\end{proof}

Notice that (F6) and (F7) imply
\begin{align}
\label{C6b}
  Q \in \ST\backslash\min(\ST) \Longrightarrow \CH(Q)\subset\ST.
\end{align}
For $\ST\in\F$ denote
\begin{align*}
  m_1(\ST) &= \{\, Q\in\min(\ST)\, :\, \CH(Q)\backslash\G\neq\emptyset\, \},  \\
  m_2(\ST) &= \min(\ST) \backslash m_1(\ST)
\end{align*}
and further
\begin{align*}
  \F_1 &= \Bigl\{\, \ST\in\F\, : \mu\Bigl(\bigcup_{Q\in m_1(\ST)} Q\Bigr) \geq \mu(Q(\ST))/4\, \Bigr\},  \\
  \F_2 &= \Bigl\{\, \ST\in\F\, : \mu\Bigl( Q(\ST) \backslash\bigcup_{Q\in\min(\ST)} Q\Bigr) \geq \mu(Q(\ST))/4\, \Bigr\},  \\
  \F_3 &= \Bigl\{\, \ST\in\F\, : \mu\Bigl(\bigcup_{Q\in m_2(\ST)} Q\Bigr) \geq \mu(Q(\ST))/2\, \Bigr\}.
\end{align*}
Clearly $\F = \F_1\cup\F_2\cup\F_3$.

\begin{lemma}
\label{leF1F2}
  There is a constant $C=C(\varepsilon,K)$ such that
  \[ \sum_{\substack{\ST\in\F_1 \cup\F_2 \\ Q(\ST)\subset R}} \mu(Q(\ST)) \leq C\mu(R)\qquad\text{for all $R\in\Delta$}. \]
\end{lemma}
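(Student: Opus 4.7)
The plan is to split the sum into $\F_1$ and $\F_2$ contributions and handle them by two essentially different mechanisms: condition (F1) disposes of $\F_1$, and a disjointness argument for ``non-stopping'' subsets disposes of $\F_2$.

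For $\F_1$, I would argue as follows. Since minimal cubes in $\ST$ are pairwise disjoint, the defining inequality of $\F_1$ gives $\mu(Q(\ST)) \leq 4\sum_{Q \in m_1(\ST)} \mu(Q)$. For each $Q \in m_1(\ST)$ pick a child $R_Q \in \CH(Q)\setminus\G$, which exists by the very definition of $m_1(\ST)$. The regularity estimate \eqref{D} yields $\mu(Q) \leq D^2\alpha^k\mu(R_Q)$. The pairs $(\ST, Q)$ produce pairwise distinct cubes $R_Q$, because $R_Q$ has a unique parent $Q$, and by (F2) the same cube $Q$ cannot lie in two different members of $\F$. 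All the chosen cubes $R_Q$ lie in $\Delta\setminus\G$ and inside $R$, so summing and applying (F1) yields the bound $C(\varepsilon,K)\mu(R)$.

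For $\F_2$, the key is to introduce $T(\ST) := Q(\ST) \setminus \bigcup_{Q\in\min(\ST)} Q$, so that $\mu(T(\ST)) \geq \mu(Q(\ST))/4$ by definition. I claim that the sets $\{T(\ST)\}_{\ST\in\F}$ are $\mu$-essentially pairwise disjoint; granted this, since $T(\ST) \subset Q(\ST) \subset R$, the $\F_2$-sum is at most $4\mu(R)$. To prove the disjointness, fix a $\mu$-a.e.\ $x \in E$ which, by \eqref{D1}--\eqref{D2} applied simultaneously across the countable set $\ZZ$, lies in a unique cube $Q_j(x) \in \Delta_j$ at every scale $j \in \ZZ$. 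Assuming $x \in T(\ST)$, a downward induction on $j$ shows $Q_j(x) \in \ST\setminus\min(\ST)$ for all $j \leq j_{Q(\ST)}-1$: the base case is $Q(\ST) = Q_{j_{Q(\ST)}}(x) \in \ST$ by (F3), and the inductive step uses that $x \notin \bigcup\min(\ST)$ forces $Q_{j+1}(x) \in \ST\setminus\min(\ST)$, whence \eqref{C6b} gives $\CH(Q_{j+1}(x))\subset\ST$ and in particular $Q_j(x)\in\ST$. If $x \in T(\ST_1) \cap T(\ST_2)$ with WLOG $j_{Q(\ST_1)} \leq j_{Q(\ST_2)}$, then $Q(\ST_1) = Q_{j_{Q(\ST_1)}}(x)$ lies in $\ST_1 \cap \ST_2$, contradicting (F2).

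The main obstacle is the $\F_1$-bookkeeping: one must verify that the chosen bad children $R_Q$ really are distinct as $(\ST,Q)$ varies so that no term is double-counted when (F1) is invoked. The $\F_2$-argument becomes clean once the right set $T(\ST)$ is identified, the only minor point being that the exceptional null sets from \eqref{D1} must be amalgamated over the countable index set $\ZZ$ so that a single set of full measure works at all scales simultaneously.
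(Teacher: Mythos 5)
Your proposal is correct and follows essentially the same route as the paper: the $\F_1$ part is handled exactly as in the text (replace each $Q\in m_1(\ST)$ by a child in $\Delta\backslash\G$, use \eqref{D} and then (F1)), and the $\F_2$ part rests on the same observation that the sets $Q(\ST)\backslash\bigcup_{Q\in\min(\ST)}Q$ are pairwise disjoint, so the sum is at most $4\mu(R)$. The only difference is cosmetic: the paper proves this disjointness by taking the minimal cube of $\ST_2$ containing $Q(\ST_1)$ and showing via \eqref{C6b} that it lies in $\min(\ST_2)$, whereas you descend through the dyadic scales at a $\mu$-generic point, which yields essential disjointness and suffices equally well.
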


\begin{proof}
Let $R\in\Delta$. By \eqref{D} and (F1)
\begin{align*}
  \sum_{\substack{\ST\in\F_1 \\ Q(\ST)\subset R}} \mu(Q(\ST))
  \leq 4\sum_{\substack{\ST\in\F_1 \\ Q(\ST)\subset R}} \mu\Bigl(\bigcup_{Q\in m_1(\ST)} Q\Bigr)
  \leq 4D^2\alpha^k\sum_{\substack{Q\in\Delta\backslash\G \\ Q\subset R}} \mu(Q) \leq 4D^2\alpha^kC(\varepsilon,K)\mu(R).
\end{align*}
Let $\ST_1,\ST_2\in\F$, $\ST_1\neq\ST_2$. Then $Q(\ST_1)\neq Q(\ST_2)$ by (F2). If $Q(\ST_1) \cap Q(\ST_2)\neq\emptyset$ then \eqref{D3} implies
$Q(\ST_1) \subset Q(\ST_2)$ or $Q(\ST_2) \subset Q(\ST_1)$. Assume that $Q(\ST_1) \subset Q(\ST_2)$ and take minimal $Q\in\ST_2$ such that
$Q(\ST_1) \subset Q$. Since $Q \neq Q(\ST_1)$ by (F2) one has $Q\in\min(\ST_2)$ by \eqref{C6b}.
Thus the sets $Q(\ST) \backslash\bigcup_{Q\in\min(\ST)} Q$, $\ST\in\F$, are disjoint and
\begin{align*}
  \sum_{\substack{\ST\in\F_2 \\ Q(\ST)\subset R}} \mu(Q(\ST))
  \leq 4\sum_{\substack{\ST\in\F_2 \\ Q(\ST)\subset R}} \mu\Bigl( Q(\ST) \backslash\bigcup_{Q\in\min(\ST)} Q\Bigr)
  \leq 4\mu(R).
\end{align*}
\end{proof}

Now the goal is to show that
\begin{align}
\label{t}
  \sum_{\substack{\ST\in\F_3 \\ Q(\ST)\subset R}} \mu(Q(\ST)) \leq C\mu(R)\qquad\text{for all $R\in\Delta$}.
\end{align}
The full assumption~\eqref{oletus} will be used (instead of the weaker condition~\eqref{wgl}) only on page~\pageref{ol} to get \eqref{t}.
After this Theorem~\ref{th} follows quite easily by the following lemma (see Section~\ref{end}).

For any $\ST\in\F$ define the function $h_\ST:\HE^n\to\R$ by setting
\begin{align*}
  h_\ST(x) = \inf\{\, d(x,Q) + d(Q)\, :\, Q\in\ST\, \}.
\end{align*}

\begin{lemma}
\label{ledh}
If $\ST\in\F$ and $x,y\in K_0Q(\ST)$ with $d(x,y) > D^{-2}\min\{h_\ST(x),h_\ST(y)\}$, then
$d(x,y) \leq (1+2\delta)d(P_{V_{Q(\ST)}}(x),P_{V_{Q(\ST)}}(y))$.
\end{lemma}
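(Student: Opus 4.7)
The plan is to approximate $x$ and $y$ by points on a common Heisenberg plane $V_R$ for a suitably chosen cube $R\in\ST$ of diameter comparable to $d(x,y)$, and then use the angle condition~(F5) to transfer the estimate to $V_0:=V_{Q(\ST)}$.

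First I would observe that $h_\ST$ is $1$-Lipschitz on $\HE^n$, an immediate consequence of the triangle inequality applied to the infimum defining it. Assuming without loss of generality that $h_\ST(x)\leq h_\ST(y)$, the hypothesis $d(x,y)>D^{-2}h_\ST(x)$ gives $h_\ST(x)<D^2 d(x,y)$ and $h_\ST(y)\leq h_\ST(x)+d(x,y)\leq(D^2+1)d(x,y)$, so both heights are bounded by a constant times $d(x,y)$. I then pick cubes $Q_x,Q_y\in\ST$ and points $q_x\in Q_x$, $q_y\in Q_y$ nearly realizing the infima defining $h_\ST(x)$ and $h_\ST(y)$; this makes $d(x,q_x)$, $d(y,q_y)$, $d(Q_x)$ and $d(Q_y)$ all bounded by a constant times $d(x,y)$.

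Next I produce a cube $R\in\ST$ with $d(R)\leq C\,d(x,y)$ such that $x,y\in KR$. If $d(x,y)$ is at least a fixed constant times $d(Q(\ST))$ I take $R:=Q(\ST)$, so that $x,y\in K_0Q(\ST)\subset KR$ once $K\geq K_0$. Otherwise, I take $R$ to be the (unique) cube of $\Delta$ at the level $j$ with $\alpha^j$ comparable to $d(x,y)$ that contains $q_x$: since $d(Q_x)$ is small compared with $\alpha^j$, property~\eqref{D3} yields $Q_x\subset R$ and then~(F4) places $R$ in $\ST$; a short computation from the distance bounds on $q_x$ and $q_y$ shows $y\in KR$ for $K$ chosen sufficiently large. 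In either case $x,y\in KR\subset V_R(\varepsilon^2 d(R))$, whence $d(x,V_R),d(y,V_R)\leq\varepsilon^2 d(R)$.

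Setting $\tilde x:=P_{V_R}(x)$ and $\tilde y:=P_{V_R}(y)$, Lemma~\ref{lePdist} gives $d(x,\tilde x),d(y,\tilde y)\leq 3\varepsilon^2 d(R)$. Since $\tilde x,\tilde y\in V_R$ and~(F5) asserts $\kulma{V_R}{V_0}\leq 1+\delta$, we obtain $d(\tilde x,\tilde y)\leq(1+\delta)d(P_{V_0}(\tilde x),P_{V_0}(\tilde y))$. Combining two triangle-inequality passes with the $1$-Lipschitz property of $P_{V_0}$ produces an estimate of the form
\[
d(x,y)\leq(1+\delta)\,d(P_{V_0}(x),P_{V_0}(y))+C'\varepsilon^2 d(R).
\]
Since $d(R)\leq C\,d(x,y)$, and a matching lower bound $d(P_{V_0}(x),P_{V_0}(y))\geq c\,d(x,y)$ follows from the same chain of inequalities for $\varepsilon$ small, choosing $\varepsilon$ sufficiently small compared to $\delta$ absorbs the error term into the slack and yields $d(x,y)\leq(1+2\delta)d(P_{V_0}(x),P_{V_0}(y))$.

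The main care-point is the dyadic bookkeeping in the choice of $R$: one must simultaneously ensure that $R\in\ST$ (via~\eqref{D3} and~(F4)), that $d(R)$ is comparable to $d(x,y)$ so that the approximation error $\varepsilon^2 d(R)$ fits into the $2\delta$-slack, and that both $x$ and $y$ lie in $KR$ for the value of $K$ fixed in Section~\ref{secbd}. Once this is handled, the remaining estimate is a straightforward triangle-inequality interpolation driven by~(F5) and Lemma~\ref{lePdist}.
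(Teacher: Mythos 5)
Your proposal is correct and follows essentially the same route as the paper: select a cube $R\in\ST$ with $d(R)$ comparable to $d(x,y)$ and $x,y\in KR$, approximate $x,y$ by points of $V_R$ within $C\varepsilon^2 d(R)$, apply (F5) together with the $1$-Lipschitzness of $P_{V_{Q(\ST)}}$, and absorb the $\varepsilon$-error by taking $\varepsilon$ small relative to $\delta$. The only differences are cosmetic: the paper picks $R$ as the minimal cube of $\ST$ above a cube nearly realizing $h_\ST(x)$ with $d(K_0R)\geq d(x,y)$ (rather than choosing a dyadic level directly) and uses nearest points on $V_R$ instead of $P_{V_R}$ with Lemma~\ref{lePdist}.
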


\begin{proof}
Assume that $d(x,y) > D^{-2}h_\ST(x)$ and choose $Q\in\ST$ such that
\[ d(x,y) > D^{-2}(d(x,Q)+d(Q)). \]
Let $R\in\ST$ be the minimal cube such that $Q\subset R$ and $d(K_0R)\geq d(x,y)$.
Then
\[ d(y,R) \leq d(x,y) + d(x,R) \leq d(x,y) + d(x,Q) < (1+D^2)d(x,y) \]
and $d(R) \leq D^2(1+\alpha)d(x,y)$ by \eqref{D}.
Let $z,w\in V_R$ with $d(x,z) = d(x,V_R)$ and $d(y,w) = d(y,V_R)$.
Choosing $K$ large enough (depending on $K_0$ and $D$) and denoting $P=P_{V_{Q(\ST)}}$ one gets by (F5)
\begin{align*}
  &d(P(x),P(y)) \geq d(P(z),P(w)) - d(P(x),P(z)) - d(P(y),P(w))  \\
  &\geq d(P(z),P(w)) - d(x,z) - d(y,w) \geq (1+\delta)^{-1}d(z,w) - 2\varepsilon d(R)  \\
  &\geq (1+\delta)^{-1}(d(x,y) - 2\varepsilon d(R)) - 2\varepsilon d(R)
  > ((1+\delta)^{-1} - 4D^2\varepsilon(1+\alpha))d(x,y).
\end{align*}
The claim now follows by choosing $\varepsilon$ small enough (depending on $\delta$).
\end{proof}

\section{Function $g$ for $\ST$}
\label{secS}

In this section we follow \cite[Section 8]{MR1113517} and use the same assumptions and notations as in Section~\ref{secF}.
Let $\ST\in\F$ be fixed and assume (in order to simplify notations) that $V_{Q(\ST)}=X_k$.
Then $P_{V_{Q(\ST)}}=P^e_{X_k}$ and $d(p,q)=|p-q|$ for any $p,q\in X_k$.
Because of the latter fact it is natural to denote by $d(F)$ the euclidean diameter of $F$ and by $d(p,F)$ the euclidean distance of $p$ and $F$
for any $F\subset\R^k$ and $p\in\R^k$ (though $d$ is a metric in $\HE^n$).
We write $P(x)=(x_1,\dotsc,x_k)$ and $P^{\bot}(x)=(x_{k+1},\dotsc,x_{2n})$ for any $x\in\HE^n$.
Denote also $B^k(p,r) = \{ q\in\R^k\, :\, |q-p| \leq r \}$ for $p\in\R^k$ and $r\geq 0$.
The letter $C$ in the calculations in Sections~\ref{secS}--\ref{end} denotes always some constant
but distinct appearances do not necessarily refer to the same constant
(even if they are in the same inequality chain).

Define the function $H:\R^k\to\R$ by setting
\begin{align*}
  H(p) = \inf\{\, h(x) \, :\, P(x)=p\, \}
\end{align*}
and set $Z = \{ x\in E\, :\, h(x)=0 \}$.
Here we write shortly $h=h_\ST$.
We immediately see that
\begin{align}
\label{H}
  H(p) = \inf\{\, d(p,P(Q))+d(Q) \, :\, Q\in\ST\, \}.
\end{align}
for any $p\in\R^k$. Namely, the inequality $H(p) \geq \inf\{\, d(p,P(Q))+d(Q) \, :\, Q\in\ST\, \}$ follows from the 1-Lipschitzness of $P$.
The opposite inequality holds because for any $y\in\HE^n$ and $p\in\R^k$ one can obviously choose $x\in P^{-1}(\{p\})$ such that
$d(x,y)=d(p,P(y))$. Note that $H(p)=0$ if and only if $p\in P(Z)$ (for example by the Bolzano--Weierstrass theorem).

For each $p\in\R^k\backslash P(Z)$ let $R_p$ be the largest dyadic cube in $\R^k$
containing $p$ and satisfying
\begin{align}
\label{defR}
  20d(R_p) \leq \inf\{\, H(u)\, : u \in R_p\, \}.
\end{align}
Such a cube $R_p$ exists, because $H(p)>0$ and $H$ is continuous (1-Lipschitz).
Let $\{ R_i\, :\, i\in I\} \subset \{ R_p\, : p\in \R^k\backslash P(Z)\}$ be such that $\{R_i\}_{i\in I}$ covers $\R^k\backslash P(Z)$ and
$\sisus R_i\cap\sisus R_j = \emptyset$ for distinct $i,j\in I$. Notice that $I$ is countable and $R_i\cap P(Z)=\emptyset$ for any $i\in I$.

By the definition~\eqref{defR} and the 1-Lipschitzness of $H$
\begin{align}
\label{RH}
  10d(R_i) \leq H(p) \leq 60d(R_i)\qquad\text{for any $p\in 10R_i$, $i\in I$.}
\end{align}
This gives the following lemma.

\begin{lemma}
\label{leR}
There is a constant $C$ such that whenever $10R_i\cap 10R_j\neq\emptyset$ for $i,j\in I$ then
$C^{-1}d(R_j) \leq d(R_i) \leq Cd(R_j)$.
\end{lemma}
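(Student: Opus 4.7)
The plan is to observe that the claimed comparability is an immediate consequence of the two‑sided bound \eqref{RH}, which has already been established. Concretely, I would pick any point $p\in 10R_i\cap 10R_j$ (such a point exists by hypothesis) and then apply \eqref{RH} to each of the two cubes at this common point.

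From \eqref{RH} applied to $R_i$ and $p\in 10R_i$ we get $10d(R_i)\leq H(p)$, and from \eqref{RH} applied to $R_j$ and $p\in 10R_j$ we get $H(p)\leq 60d(R_j)$. Chaining these gives $d(R_i)\leq 6d(R_j)$. Swapping the roles of $i$ and $j$ in the same argument yields $d(R_j)\leq 6d(R_i)$. Thus $C=6$ works.

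I do not anticipate any real obstacle: the work has effectively been done in proving \eqref{RH}, whose lower bound follows from the defining property \eqref{defR} of $R_p$ and whose upper bound follows from the $1$-Lipschitzness of $H$ together with the maximality of $R_p$ as a dyadic cube satisfying \eqref{defR} (since the dyadic parent of $R_p$ would fail \eqref{defR}, it contains a point where $H$ is at most $20$ times its diameter, i.e.\ $40d(R_p)$; propagating this back to $10R_p$ via Lipschitzness gives the stated $60d(R_i)$ bound). The present lemma is then just the statement that two dyadic cubes with overlapping $10$-fold dilations must be comparable in size, which follows at one stroke from the uniform control of $H$ on each $10R_i$.
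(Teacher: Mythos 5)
Your proposal is correct and is exactly the paper's argument: the paper deduces Lemma~\ref{leR} directly from \eqref{RH} (``This gives the following lemma''), and your chaining of $10d(R_i)\leq H(p)\leq 60d(R_j)$ at a common point $p\in 10R_i\cap 10R_j$, plus symmetry, is the intended one-line proof with $C=6$.
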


Let $x_0\in Q(\ST)$ be any fixed point. Denote
$U_j = B^k(P(x_0),2^{-j}K_0d(Q(\ST)))$ and
$I_j = \{ i\in I\, :\, R_i\cap U_j\neq\emptyset\}$
for $j\in\R$.
By \eqref{H} and \eqref{RH} there exist constants $C_0$ (which may depend on $K_0$ by $C_0=CK_0$) and $C$ such that
for each $i\in I_0$ there is $Q_i\in\ST$ for which
\begin{align}
\label{Qi1}
  &C_0^{-1}d(R_i) \leq d(Q_i) \leq Cd(R_i), \\
\label{Qi2}
  &d(P(Q_i) \cup R_i) \leq Cd(R_i).
\end{align}
(In \eqref{Qi2} we use the fact that $P$ is 1-Lipschitz.)

For each $i\in I_0$ let $A_i : \R^k \to \R^{2n-k}$ be the affine function whose graph is $V_{Q_i}'$.
By Lemma~\ref{lekulmat} and (F5) (and by choosing $\delta\leq 1$)
\begin{align}
\label{LipB}
  \Lip(A_i) \leq \sqrt{(1+\delta)^2-1} < 2\sqrt{\delta}.
\end{align}

\begin{lemma}
\label{leA}
There is a constant $C$ such that whenever $10R_i\cap 10R_j\neq\emptyset$ for $i,j\in I_0$ then
$d(Q_i \cup Q_j) \leq Cd(R_j)$ and
\[ |A_i(p)-A_j(p)| \leq C\sqrt{\varepsilon}d(R_j)\qquad\text{for all $p\in 100R_j$}. \]
\end{lemma}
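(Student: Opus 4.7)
The first bound $d(Q_i\cup Q_j)\leq Cd(R_j)$ will come from combining the projected-distance estimate with Lemma~\ref{ledh}. Given any $x\in Q_i$ and $y\in Q_j$, \eqref{Qi2} together with Lemma~\ref{leR} yields $|P(x)-P(y)|\leq Cd(R_j)$, while the definition of $h_\ST$ and \eqref{Qi1} give $h_\ST(x),h_\ST(y)\leq d(Q_i)+d(Q_j)\leq Cd(R_j)$. Since $x,y\in Q(\ST)\subset K_0Q(\ST)$ by (F4), either $d(x,y)\leq D^{-2}Cd(R_j)$ directly, or $d(x,y)>D^{-2}\min\{h_\ST(x),h_\ST(y)\}$ and Lemma~\ref{ledh} yields $d(x,y)\leq(1+2\delta)|P(x)-P(y)|\leq Cd(R_j)$; in either case the first claim follows.

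For the second bound I would apply Lemma~\ref{leL} to $Q_i\in\G_1$ to obtain points $y_0,\dotsc,y_k\in V_{Q_i}\cap Q_i(\varepsilon^2 d(Q_i))$ with $d(y_{l+1},L_l)>cd(Q_i)$, and for each $l$ pick $x_l\in Q_i$ with $d(x_l,y_l)\leq 2\varepsilon^2 d(Q_i)$. The first part, combined with \eqref{Qi1}, gives $d(x_l,Q_j)\leq Cd(R_j)\leq CC_0d(Q_j)$, so choosing $K$ large enough in terms of $K_0$ (consistent with the fixing order of Section~\ref{secbd}) ensures $x_l\in KQ_j$. Since $Q_j\in\G_1$, this yields $d(x_l,V_{Q_j})\leq\varepsilon^2 d(Q_j)$, whence $d(y_l,V_{Q_j})\leq C\varepsilon^2 d(R_j)$.

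Now pick $\tilde y_l\in V_{Q_j}$ at distance $\leq C\varepsilon^2 d(R_j)$ from $y_l$. Using $|y_l'-\tilde y_l'|\leq d(y_l,\tilde y_l)$ along with the graph descriptions $y_l'=(P(y_l),A_i(P(y_l)))$ and $\tilde y_l'=(P(\tilde y_l),A_j(P(\tilde y_l)))$ and the bound $\Lip(A_j)\leq 2\sqrt\delta\leq 1$ from \eqref{LipB}, one obtains $|A_i(P(y_l))-A_j(P(y_l))|\leq C\varepsilon^2 d(R_j)$ for every $l$. By (F5) together with our normalization $V_{Q(\ST)}=X_k$, $\kulma{V_{Q_i}}{X_k}\leq 1+\delta$, so the $P(y_l)$ inherit the affine spread from Lemma~\ref{leL} and form an affinely independent $(k+1)$-tuple in $\R^k$ with mutual thickness comparable to $d(R_j)$. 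A standard interpolation argument for affine maps then bounds the Lipschitz constant of $A_i-A_j$ by $C\varepsilon^2$, and evaluating at $p\in 100R_j$, where $|p-P(y_0)|\leq Cd(R_j)$ by \eqref{Qi2} and Lemma~\ref{leR}, yields $|A_i(p)-A_j(p)|\leq C\varepsilon^2 d(R_j)\leq C\sqrt\varepsilon d(R_j)$ for $\varepsilon$ small.

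The main point requiring care is the choice of $K$ large in terms of $K_0$ in the second step to guarantee $x_l\in KQ_j$; this matches the $K$-choice made in the proof of Lemma~\ref{ledh} and is compatible with the fixing order of Section~\ref{secbd}. The remainder of the argument is mechanical once Lemmas~\ref{leL} and \ref{ledh} are in hand.
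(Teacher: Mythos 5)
Your proof is correct. The first assertion (and the preliminary conclusion that $Q_i\subset KQ_j$ and $Q_j\subset KQ_i$ for $K$ large in terms of $K_0$) follows the paper essentially verbatim, via $h_\ST$, Lemma~\ref{ledh}, \eqref{Qi1}, \eqref{Qi2} and Lemma~\ref{leR}. For the second assertion, however, the paper invokes Lemma~\ref{leQP} to obtain $\kulma{V_{Q_j}}{V_{Q_i}}\leq 1+\varepsilon$, picks a single anchor point $y\in V_{Q_j}$ near some $z\in Q_j$, and closes with a Pythagorean-theorem computation on $v=(p,A_j(p))$; you replace this with a multi-anchor argument: Lemma~\ref{leL} gives $k+1$ points of $V_{Q_i}$ near $Q_i$, each within $C\varepsilon^2 d(R_j)$ of $V_{Q_j}$ (since $Q_i\subset KQ_j$), so $A_i$ and $A_j$ agree at the $P(y_l)$ up to error $C\varepsilon^2 d(R_j)$, while {\rm(F5)} and Lemma~\ref{lekulmat} (through the normalization $V_{Q(\ST)}=X_k$) ensure the $P(y_l)$ retain affine spread comparable to $d(R_j)$, after which affine interpolation propagates the bound to $100R_j$. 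Both routes are sound and share the same reduction; yours trades the Pythagorean step for interpolation, and in fact yields the slightly sharper bound $C\varepsilon^2 d(R_j)$ (the paper's $\sqrt\varepsilon$ is exactly the square root lost in the Pythagorean step, which in turn is what requires Lemma~\ref{leQP}). One point worth making explicit in your write-up is that the interpolation constant stays controlled because the anchor set $\{P(y_l)\}$ has diameter and thickness both comparable to $d(R_j)$; this is true here but is needed to keep the constant dimension-dependent only.
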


\begin{proof}
For the first part let $x,y\in Q_i \cup Q_j$. By \eqref{Qi1} and Lemma~\ref{leR} one may assume that $x\in Q_i$ and $y\in Q_j$ are such that $d(x,y)\geq d(Q_j)$.
Since by definition $h(y)\leq d(Q_j)$, Lemma~\ref{ledh}, \eqref{Qi2} and Lemma~\ref{leR} give
\[ d(x,y) \leq (1+2\delta)|P(x)-P(y)| \leq Cd(R_j). \]
Thus by choosing $K$ large enough depending on $K_0$ \eqref{Qi1} gives $Q_i\subset KQ_j$ (and $Q_j\subset KQ_i$).
Now Lemma~\ref{leQP} (with $Q=Q_j$ and $V=V_{Q_i}$) gives
\begin{align*}
 \kulma{V_{Q_j}}{V_{Q_i}} \leq 1+\varepsilon.
\end{align*}
Let $p\in 100R_j$ and $z\in Q_j$. Take $y\in V_{Q_j}$ such that $d(y,z)\leq\varepsilon d(Q_j)$. Then by \eqref{Qi1} and Lemma~\ref{leR}
\[ |y'-P^e_{V_{Q_i}'}(y')| = d_e(y',V_{Q_i}') \leq d(y,z)+d(z,V_{Q_i}) \leq \varepsilon(d(Q_i)+d(Q_j)) \leq C\varepsilon d(R_j) \]
and by \eqref{Qi1} and \eqref{Qi2}
\[ |P(y)-p| \leq |P(y)-P(z)|+|P(z)-p| \leq d(y,z)+ d(P(Q_j)\cup 100R_j) \leq Cd(R_j). \]
Denote $v=(p,A_j(p))$. Using the Pythagorean theorem, the above estimates, Lemma~\ref{lekulmat} and (F5)
\begin{align*}
  |v-P^e_{V_{Q_i}'}(v)|^2 &= |v-P^e_{V_{Q_i}'}(y')|^2 - |P^e_{V_{Q_i}'}(v)-P^e_{V_{Q_i}'}(y')|^2 \\
  &\leq (|v-y'|+|y'-P^e_{V_{Q_i}'}(y')|)^2 - |P^e_{V_{Q_i}'}(v)-P^e_{V_{Q_i}'}(y')|^2 \\
  &\leq (|v-y'|+C\varepsilon d(R_j))^2 - (1+\varepsilon)^{-2}|v-y'|^2 \\
  &\leq C\varepsilon d(R_j)^2
\end{align*}
and therefore by (F5)
\[ |A_i(p)-A_j(p)| \leq |v-P^e_{V_{Q_i}'}(v)|+(1+\delta)|P(P^e_{V_{Q_i}'}(v))-P(v)| \leq C\sqrt{\varepsilon}d(R_j). \]
\end{proof}

For each $i\in I_0$ let $\tilde{\phi}_i:\R^k\to [0,1]$ be  a $C^2$ function such that
$\tilde{\phi}_i(p)=1$ for all $p\in 2R_i$, $\tilde{\phi}_i(p)=0$ for all $p\in\R^k\backslash 3R_i$ and
\begin{align}
\label{phitilde}
\begin{split}
  |\partial_j\tilde{\phi}_i| &\leq Cd(R_i)^{-1}, \\
  |\partial_j\partial_m\tilde{\phi}_i| &\leq Cd(R_i)^{-2}
\end{split}  
\end{align}
for all $j,m\in\{1,\dotsc,k\}$.
Set
\begin{align*}
  \phi_i(p) = \frac{\tilde{\phi}_i(p)}{\sum_{j\in I_0}\tilde{\phi}_j(p)}\qquad\text{for any $p\in U_0\backslash P(Z)$, $i\in I_0$}.
\end{align*}
For each $p\in P(Z)$ there is $x(p)$ such that $P^{-1}(\{p\}) \cap K_0Q(\ST)= \{x(p)\}$ by Lemma~\ref{ledh}.
We now define a function $g:U_0\to\R^{2n-k}$ by setting
\begin{align*}
  g(p) = \begin{cases}
    \sum\limits_{i\in I_0} \phi_i(p)A_i(p), &\text{if $p\in U_0\backslash P(Z)$}  \\
    P^{\bot}(x(p)), &\text{if $p\in P(Z)$}.
  \end{cases}
\end{align*}

\begin{lemma}
\label{leglip}
The function $g$ is $C\sqrt{\delta}$-Lipschitz.
\end{lemma}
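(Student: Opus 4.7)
The plan is to estimate $|g(p) - g(q)|$ in three stages: bound $|\nabla g|$ on the smooth region $U_0 \setminus P(Z)$, estimate $|g(p)-g(q)|$ directly for $p, q \in P(Z)$ using Lemma \ref{ledh}, and then combine via continuity and a decomposition of the segment $[p,q]$.

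For the derivative bound, fix $p \in U_0 \setminus P(Z)$. By Lemma \ref{leR}, the index set $J(p) = \{i \in I_0 : p \in 3R_i\}$ has cardinality bounded by a constant depending only on $k$. Pick any $i_0 \in J(p)$. Since $\sum_i \phi_i \equiv 1$, we have $\sum_i \partial_j \phi_i \equiv 0$, so
\[
  \partial_j g(p) = \sum_{i \in J(p)} \partial_j \phi_i(p)\bigl(A_i(p) - A_{i_0}(p)\bigr) + \sum_{i \in J(p)} \phi_i(p)\,\partial_j A_i(p).
\]
For the first sum, Lemma \ref{leA} with $R_j$ played by $R_{i_0}$ (noting $p \in 3R_{i_0} \subset 100R_{i_0}$) and Lemma \ref{leR} give $|A_i(p) - A_{i_0}(p)| \leq C\sqrt{\varepsilon}\, d(R_{i_0})$, while the quotient rule applied via \eqref{phitilde} together with $\sum_j \tilde\phi_j(p) \geq 1$ (which holds because $p \in R_{j_0}$ for some $j_0 \in I_0$ and $\tilde\phi_{j_0} \equiv 1$ on $R_{j_0}$) yields $|\partial_j \phi_i(p)| \leq C/d(R_i) \leq C/d(R_{i_0})$. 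Boundedness of $\#J(p)$ makes the first sum $O(\sqrt{\varepsilon})$. The second sum is at most $\max_i \Lip(A_i) \leq 2\sqrt\delta$ by \eqref{LipB} and $\sum_i \phi_i \equiv 1$. Since $\varepsilon$ is fixed after $\delta$, taking $\varepsilon \leq \delta$ yields $|\nabla g(p)| \leq C\sqrt{\delta}$ throughout $U_0 \setminus P(Z)$.

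For $p, q \in P(Z)$ with $p \neq q$, Lemma \ref{ledh} supplies the unique $x(p), x(q) \in K_0 Q(\ST)$ with $h_\ST(x(p)) = h_\ST(x(q)) = 0$ and gives $d(x(p), x(q)) \leq (1+2\delta)|p-q|$. The Pythagorean identity $|x(p)' - x(q)'|^2 = |p-q|^2 + |g(p) - g(q)|^2$ together with $|x(p)' - x(q)'| \leq d(x(p), x(q))$ yields $|g(p)-g(q)|^2 \leq ((1+2\delta)^2 - 1)|p-q|^2 \leq C\delta|p-q|^2$. Continuity of $g$ at $p \in P(Z)$ is then verified as follows: if $q_n \to p$ with $q_n \in R_{i_n}$, then \eqref{RH} and the 1-Lipschitzness of $H$ give $d(R_{i_n}) \leq |p - q_n|/10 \to 0$; picking $z_n \in Q_{i_n}$, Lemma \ref{leA} yields $|g(q_n) - A_{i_n}(q_n)| \to 0$; the inclusion $KQ_{i_n} \subset V_{Q_{i_n}}(\varepsilon^2 d(Q_{i_n}))$ together with \eqref{LipB} gives $|P^{\bot}(z_n) - A_{i_n}(P(z_n))| \to 0$; and Lemma \ref{ledh} applied to $z_n$ and $x(p)$ forces $d(z_n, x(p)) \leq (1+2\delta)|P(z_n)-p| \to 0$, whence $P^{\bot}(z_n) \to g(p)$. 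Combining these gives $g(q_n) \to g(p)$.

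For arbitrary $p, q \in U_0$, parametrize $[p,q]$ by $\gamma(t) = (1-t)p + tq$. If $\gamma^{-1}(P(Z)) = \emptyset$, the derivative bound and the fundamental theorem of calculus give the result directly. Otherwise set $a = \inf \gamma^{-1}(P(Z))$ and $b = \sup \gamma^{-1}(P(Z))$: the half-open intervals $[0, a)$ and $(b, 1]$ map into $U_0 \setminus P(Z)$ by minimality, so the derivative bound together with continuity at $\gamma(a), \gamma(b)$ yields $|g(p) - g(\gamma(a))| \leq C\sqrt\delta \cdot a|p-q|$ and $|g(\gamma(b)) - g(q)| \leq C\sqrt\delta (1-b)|p-q|$, while the $P(Z)$ estimate handles $|g(\gamma(a)) - g(\gamma(b))|$. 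Telescoping gives $|g(p)-g(q)| \leq C\sqrt\delta|p-q|$. The main obstacle is balancing the two smallness parameters: the clean $\sqrt\delta$ factor arises from the small slopes of the affine pieces $A_i$, while the partition-of-unity cross terms inject a $\sqrt\varepsilon$ error via Lemma \ref{leA} that must be dominated by $\sqrt\delta$, which is compatible with the paper's prescribed order of fixing constants.
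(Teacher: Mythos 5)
Your proof is correct and follows essentially the same route as the paper: the $\sqrt{\delta}$ comes from \eqref{LipB} (i.e.\ (F5)), the $\sqrt{\varepsilon}$ cross-terms of the partition of unity are controlled by Lemmas~\ref{leR} and \ref{leA} with $\varepsilon/\delta$ small (the paper's \eqref{g2Rj}), and the estimate across $Z$ is the same Pythagoras/Lemma~\ref{ledh} computation as the paper's \eqref{gPZ}. The only difference is organizational: where the paper proves the quantitative mixed estimate \eqref{gPZR} (chaining $g(p)\approx A_j(p)\approx P^{\bot}(y)$ for $y\in Q_j$ and using $H(p)\leq|p-q|$), you instead establish continuity of $g$ at $P(Z)$ and split the segment at its first and last contact with $P(Z)$, which is an equally valid way to glue the two main estimates.
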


\begin{proof}
By taking $\varepsilon/\delta$ small enough we get as in \cite[equation (8.19)]{MR1113517} that
\begin{align}
\label{g2Rj}
  |g(p)-g(q)| \leq 3\sqrt{\delta}|p-q|\qquad\text{for $p,q\in 2R_j \cap U_0$, $j\in I_0$}.
\end{align}
By Lemma~\ref{ledh}
\begin{align}
\label{gPZ}
  |P^{\bot}(y)-g(q)| \leq 2\sqrt{\delta(1+\delta)}|P(y)-q|\qquad\text{for $q\in P(Z)$, $y\in Q(\ST)$}.
\end{align}
Let for a while $j\in I_0$, $p\in R_j \cap U_0$, $y\in Q_j$ and $q\in P(Z)$.
Then
\begin{align*}
  |g(p)-A_j(p)| \leq C\sqrt{\varepsilon}d(R_j)
\end{align*}
by the definition of $g$ and Lemma~\ref{leA}
(because the supports of the functions $\phi_i$ have bounded overlap by Lemma~\ref{leR}), and
\begin{align*}
  |A_j(p)-A_j(P(y))| \leq 2\sqrt{\delta}|p-P(y)| \leq C\sqrt{\delta}d(R_j)
\end{align*}
by \eqref{LipB} and \eqref{Qi2}. By (F5)
\begin{align*}
  |A_j(P(y))-P^{\bot}(y)| \leq (2+\delta)\varepsilon d(Q_j).
\end{align*}
Here $H(p) = H(p)-H(q) \leq |p-q|$. Thus by \eqref{gPZ}, \eqref{defR}, \eqref{Qi1} and \eqref{Qi2}
(choosing $\varepsilon$ small enough depending on $\delta$ and $K$)
\begin{align}
\label{gPZR}
  |g(p)-g(q)| \leq C\sqrt{\delta}|p-q|\qquad\text{for $p\in P(Z)$, $q\in U_0\backslash P(Z)$}.
\end{align}
Lemma now follows easily from \eqref{g2Rj}, \eqref{gPZ} and \eqref{gPZR}.
\end{proof}

\begin{lemma}
\label{leC1}
There is a constant $C=C(K_0)$ such that
$P^{-1}(\{p\}) \cap K_0Q(\ST) \subset CQ_i$
for all $p\in R_i$, $i\in I_0$.
\end{lemma}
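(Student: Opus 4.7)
The plan is to fix any $x \in P^{-1}(\{p\}) \cap K_0Q(\ST)$ and any $y \in Q_i$, and to compare them through Lemma~\ref{ledh}. Since $Q_i \in \ST$ gives $Q_i \subset Q(\ST)$ by (F3), one has $y \in Q(\ST) \subset K_0Q(\ST)$, so the hypothesis $x, y \in K_0Q(\ST)$ of Lemma~\ref{ledh} is met. Writing $h = h_\ST$ and plugging $Q = Q_i$ into the infimum defining $h(y)$ yields the bookkeeping bound $h(y) \leq d(Q_i)$, which I would record at the outset.

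Next I would split into the two cases built into Lemma~\ref{ledh}. If $d(x,y) \leq D^{-2}\min\{h(x), h(y)\}$, the above bound forces $d(x,y) \leq D^{-2}d(Q_i)$. Otherwise Lemma~\ref{ledh} produces $d(x,y) \leq (1+2\delta)|P(x) - P(y)|$; since $P(x) = p \in R_i$ and $P(y) \in P(Q_i)$, estimate \eqref{Qi2} gives $|P(x) - P(y)| \leq d(P(Q_i) \cup R_i) \leq Cd(R_i)$, which via \eqref{Qi1} is at most $CC_0d(Q_i) = C(K_0)d(Q_i)$. Taking the worse of the two cases one obtains $d(x, Q_i) \leq d(x,y) \leq C(K_0)d(Q_i)$; together with $x \in K_0Q(\ST) \subset E$ this is exactly $x \in C(K_0)Q_i$.

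The substantive step is really just the dichotomy provided by Lemma~\ref{ledh}; the rest is a direct substitution of the estimates \eqref{Qi1} and \eqref{Qi2} already packaged into the choice of $Q_i$. The main thing to check is that both $x$ and $y$ genuinely lie in $K_0Q(\ST)$, so that the lemma is applicable in the first place. The dependence of the final constant on $K_0$ enters only through the factor $C_0 = CK_0$ of \eqref{Qi1}, so no further parameter choices are needed.
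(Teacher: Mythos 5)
Your argument is correct and is essentially the paper's own proof: the paper likewise applies Lemma~\ref{ledh} after disposing of the trivial case $d(x,y)\leq d(Q_i)$ (equivalent to your dichotomy, since $h(y)\leq d(Q_i)$), and then concludes via \eqref{Qi2} and \eqref{Qi1}, with the $K_0$-dependence entering exactly through $C_0=CK_0$. No gaps.
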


\begin{proof}
Let $i\in I_0$, $p\in R_i$, $x\in P^{-1}(\{p\}) \cap K_0Q(\ST)$ and $y\in Q_i$. We may assume that $d(x,y) > d(Q_i)$ (since otherwise $x\in 2Q_i$).
Then $d(x,y) > h(y)$ and Lemma~\ref{ledh} (by choosing $\delta$ small), \eqref{Qi2} and \eqref{Qi1} yield
$d(x,y) \leq 2|p-P(y)| \leq Cd(Q_i)$.
\end{proof}

\begin{lemma}
\label{leC2}
There is a constant $C=C(K_0)$ such that
$h(x) \leq C H(P(x))$ for all $x\in K_0Q(\ST)$.
\end{lemma}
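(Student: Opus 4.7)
The plan is to split the argument according to whether $P(x)$ lies outside $P(Z)$ (the generic case, where the Whitney-like family $\{R_i\}$ controls both $h_\ST(x)$ and $H(P(x))$) or inside $P(Z)$ (the boundary case, where both quantities degenerate to $0$ and the inequality becomes trivial once the right identification is made).

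In the generic case $P(x)\notin P(Z)$, I would pick the cube $R_i$ from the decomposition of $\R^k\setminus P(Z)$ that contains $P(x)$. Since $P$ is $1$-Lipschitz and $x\in K_0Q(\ST)$, the image $P(x)$ lies in $U_0=B^k(P(x_0),K_0d(Q(\ST)))$, so $R_i$ meets $U_0$ and hence $i\in I_0$. Lemma~\ref{leC1} then gives $x\in CQ_i$ with $C=C(K_0)$, which produces
\[ h_\ST(x) \leq d(x,Q_i)+d(Q_i) \leq Cd(Q_i) \leq Cd(R_i) \]
by \eqref{Qi1}. Because $P(x)\in R_i\subset 10R_i$, the first inequality in \eqref{RH} gives $10d(R_i)\leq H(P(x))$, and combining the two bounds yields $h_\ST(x)\leq CH(P(x))/10$, as required.

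In the boundary case $P(x)\in P(Z)$ one has $H(P(x))=0$, so the assertion reduces to $h_\ST(x)=0$, i.e.\ $x\in Z$. I would pick any $z\in Z$ with $P(z)=P(x)$; since $h_\ST(z)=0$, the point $z$ is a $d$-limit of points from cubes $Q\in\ST$ of arbitrarily small diameter, all contained in $Q(\ST)$, so $z$ lies in the closure of $Q(\ST)$ in $E$ and hence in $K_0Q(\ST)$. Both $x$ and $z$ then belong to $P^{-1}(\{P(x)\})\cap K_0Q(\ST)$, which is the singleton $\{x(P(x))\}$ recorded before the definition of $g$ (this singleton property being a direct consequence of Lemma~\ref{ledh}: the hypothesis $d(x,y)>D^{-2}\min\{h_\ST(x),h_\ST(y)\}$ is automatic as soon as the minimum equals $0$ and the two points are distinct). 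Therefore $x=z\in Z$. The only real obstacle is exactly this boundary case: the generic estimate is a mechanical combination of Lemma~\ref{leC1}, \eqref{Qi1} and \eqref{RH}, whereas the boundary step is where one must exploit the injectivity of $P$ on $K_0Q(\ST)\cap Z$ that Lemma~\ref{ledh} provides.
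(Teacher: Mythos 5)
Your proposal is correct and follows essentially the same route as the paper: the generic case is exactly the paper's chain (take $R_i\ni P(x)$, use Lemma~\ref{leC1}, \eqref{Qi1} and \eqref{RH}), and the boundary case $P(x)\in P(Z)$ is handled, as the paper does implicitly, via Lemma~\ref{ledh} (through the singleton property of $P^{-1}(\{p\})\cap K_0Q(\ST)$ already recorded before the definition of $g$), forcing $x\in Z$ and hence $h(x)=0$. No gaps; you have merely written out the step the paper dismisses with ``one may assume $P(x)\not\in P(Z)$''.
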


\begin{proof}
Let $x\in K_0Q(\ST)$. By Lemma~\ref{ledh} one may assume that $P(x)\not\in P(Z)$.
Let $i\in I_0$ such that $P(x)\in R_i$ (such $i$ exists because $P$ is 1-Lipschitz).
Then 
$h(x) \leq d(x,Q_i)+d(Q_i) \leq Cd(Q_i) \leq CH(P(x))$
by the previous lemma, \eqref{Qi1} and \eqref{RH}.
\end{proof}

\begin{lemma}
\label{legappr}
There exists a constant $C$ such that
\[ |P^{\bot}(x)-g(P(x))| \leq C\sqrt{\varepsilon}h(x) \]
for all $x\in K_0Q(\ST)$.
\end{lemma}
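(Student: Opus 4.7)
The plan is to split by whether $P(x)$ lies in $P(Z)$. If $P(x)\in P(Z)$, the uniqueness clause defining $x(P(x))$ forces $x=x(P(x))$, hence $g(P(x))=P^\bot(x)$ and the left-hand side simply vanishes. The substantive case is $P(x)\notin P(Z)$; there I would pick $i\in I_0$ with $P(x)\in R_i$ and insert the affine approximation $A_i(P(x))$ as an intermediate value,
\[ |P^\bot(x)-g(P(x))|\le |P^\bot(x)-A_i(P(x))|+|A_i(P(x))-g(P(x))|. \]

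For the first summand, Lemma~\ref{leC1} gives $x\in CQ_i$ with $C=C(K_0)$. Taking $K$ large compared to this constant gives $x\in KQ_i\subset V_{Q_i}(\varepsilon^2 d(Q_i))$, so one can choose $y\in V_{Q_i}$ with $d(x,y)\le\varepsilon^2 d(Q_i)$. Applying the trivial norm bound $|z'|\le\|z\|$ to $z=y^{-1}\cdot x$ yields $|x'-y'|\le\varepsilon^2 d(Q_i)$; combining this with $P^\bot(y)=A_i(P(y))$ (since $V_{Q_i}'$ is the graph of $A_i$) and the Lipschitz estimate \eqref{LipB} gives $|P^\bot(x)-A_i(P(x))|\le C\varepsilon^2 d(Q_i)$.

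For the second summand, expand $g(P(x))=\sum_{j\in I_0}\phi_j(P(x))A_j(P(x))$, where the only nontrivial contributions come from $j$ with $P(x)\in 3R_j$. For any such $j$ the point $P(x)$ lies in both $10R_i$ and $10R_j$, so Lemma~\ref{leR} makes $d(R_i)$ and $d(R_j)$ comparable, and Lemma~\ref{leA} applies to yield $|A_j(P(x))-A_i(P(x))|\le C\sqrt{\varepsilon}\,d(R_j)\le C\sqrt{\varepsilon}\,d(R_i)$. Summing against the convex-combination weights $\phi_j(P(x))$ gives $|g(P(x))-A_i(P(x))|\le C\sqrt{\varepsilon}\,d(R_i)$. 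Using $d(Q_i)\le Cd(R_i)$ from \eqref{Qi1} and combining with the previous step, $|P^\bot(x)-g(P(x))|\le C\sqrt{\varepsilon}\,d(R_i)$.

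To convert the right-hand side to $h(x)$, apply \eqref{RH} at $P(x)\in R_i$ to get $10d(R_i)\le H(P(x))$, and note $H(P(x))=\inf\{h(z):P(z)=P(x)\}\le h(x)$; hence $d(R_i)\le h(x)/10$ and the bound is complete. The only delicate point is bookkeeping: one needs $K\ge C(K_0)$ so that $CQ_i\subset KQ_i$ and the defining $\G_1$-property of $Q_i$ produces the required $y\in V_{Q_i}$. Since $K_0$ is fixed first and $K$ is chosen before $\varepsilon$, this is compatible with the overall hierarchy of constants.
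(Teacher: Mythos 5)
Your proof is correct and follows essentially the same route as the paper's: pick $i\in I_0$ with $P(x)\in R_i$, use Lemma~\ref{leC1} and the choice of $K$ large depending on $K_0$ to compare $P^{\bot}(x)$ with $A_i(P(x))$ via the $\G_1$-property and \eqref{LipB}, bound $|g(P(x))-A_i(P(x))|$ by $C\sqrt{\varepsilon}d(R_i)$ through the partition of unity and Lemma~\ref{leA}, and convert to $h(x)$ via \eqref{Qi1}, \eqref{RH} and $H(P(x))\le h(x)$. The explicit treatment of the case $P(x)\in P(Z)$ is a harmless elaboration of what the paper leaves implicit.
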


\begin{proof}
Let $x\in K_0Q(\ST)$ with $h(x)>0$. Now $H(P(x))>0$ by the previous lemma and so $P(x)\in R_i$ for some $i\in I_0$
and further $x\in CQ_i$ by Lemma~\ref{leC1}. As in the proof of Lemma~\ref{leglip}, choosing $K$ large enough depending on $K_0$
we get
$|A_i(P(x))-P^{\bot}(x)| \leq (2+\delta)\varepsilon d(Q_i)$
by (F5).
Since also
$|g(P(x))-A_i(P(x))| \leq C\sqrt{\varepsilon}d(R_i)$
by the definition of $g$ and Lemma~\ref{leA},
we get the result by \eqref{Qi1} and \eqref{RH}.
\end{proof}

\begin{lemma}
\label{ledA}
There is a constant $C$ such that whenever $10R_i\cap 10R_j\neq\emptyset$ for $i,j\in I_0$ then
\[ |\partial_m(A_i(p)-A_j(p))| \leq C\sqrt{\varepsilon}\qquad\text{for all $m\in\{1,\dotsc,k\}$ and $p\in\R^k$}. \]
\end{lemma}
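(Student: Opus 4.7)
The plan is to exploit the fact that $A_i$ and $A_j$ are affine maps $\R^k \to \R^{2n-k}$, so the difference $A_i - A_j$ is itself affine and each partial derivative $\partial_m(A_i - A_j)$ is a constant function of $p$. Thus it suffices to bound the discrete derivative between any two conveniently chosen points, and no separate argument is needed to extend from a local statement to all $p \in \R^k$.

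The concrete recipe: fix $m$, let $s$ be the (euclidean) side length of $R_j$, and choose $p_0 \in R_j$ so that $p_0 + s e_m \in R_j$ as well (for instance, take $p_0$ at the corner of $R_j$ with the smallest $m$-th coordinate). Both $p_0$ and $p_0 + s e_m$ lie in $100R_j$, so Lemma~\ref{leA} applies at each and yields
\[
|(A_i - A_j)(p_0)| \leq C\sqrt{\varepsilon}\, d(R_j), \qquad |(A_i - A_j)(p_0 + s e_m)| \leq C\sqrt{\varepsilon}\, d(R_j).
\]
Since $A_i - A_j$ is affine, dividing the difference by $s = d(R_j)/\sqrt{k}$ gives
\[
|\partial_m(A_i(p) - A_j(p))| \;=\; s^{-1}\bigl|(A_i - A_j)(p_0 + s e_m) - (A_i - A_j)(p_0)\bigr| \;\leq\; 2\sqrt{k}\,C\sqrt{\varepsilon},
\]
which is the required estimate (with a new constant still denoted $C$).

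There is essentially no obstacle here: the only ingredient is the pointwise bound in Lemma~\ref{leA}, and the affineness of $A_i, A_j$ (guaranteed by the construction that produced them as graphs of the horizontal planes $V_{Q_i}, V_{Q_j}$) lets a $C^0$-closeness estimate be upgraded for free to a bound on the gradient. The only thing to be careful about is picking the two test points so that both lie in the region $100R_j$ where Lemma~\ref{leA} is valid and so that their separation is comparable to $d(R_j)$, which is why choosing them inside $R_j$ itself (separation $s$) works.
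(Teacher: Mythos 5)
Your proof is correct and is essentially the paper's own argument: the paper likewise notes that $\partial_m(A_i-A_j)$ is constant, applies Lemma~\ref{leA} at two points separated by $t=d(R_j)$ in the $e_m$ direction (it uses a point of $10R_i\cap 10R_j$ and $p+te_m$, you use two corners of $R_j$; both pairs lie in $100R_j$), and divides by the separation. The only cosmetic difference is that you bound $|(A_i-A_j)(p_0)|$ and $|(A_i-A_j)(p_0+se_m)|$ separately and add, while the paper states the bound for the difference directly.
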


\begin{proof}
Since $\partial_m A_i$ is constant for any $m$ and $i$ it is enough to prove the claim for a fixed
$p\in 10R_i\cap 10R_j$. Let $t=d(R_j)$. Then by Lemma~\ref{leA}
\[ |A_i(p+te_m)-A_j(p+te_m)-(A_i(p)-A_j(p))| \leq C\sqrt{\varepsilon}t, \]
which gives the result, because for any $i$ the quotient $t^{-1}(A_i(p+te_m)-A_i(p))$ does not depend on $t$.
\end{proof}

Using \eqref{phitilde} and Lemmas~\ref{leR}, \ref{leA} and \ref{ledA} one gets (see \cite[Lemma 8.22]{MR1113517})

\begin{lemma}
\label{leddg}
There is a constant $C$ such that
\[ |\partial_j\partial_mg(p)| \leq \frac{C\sqrt{\varepsilon}}{d(R_i)}\qquad\text{for all $j,m\in\{1,\dotsc,k\}$ and $p\in 2R_i\cap\sisus U_0$, $i\in I_0$}. \]
\end{lemma}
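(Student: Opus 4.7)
The strategy is the standard partition-of-unity differentiation, exactly as in \cite[Lemma 8.22]{MR1113517}. The key algebraic observation is that since $\tilde\phi_\ell \equiv 1$ on $2R_\ell$ and the cubes $\{R_\ell\}_{\ell\in I_0}$ cover (a neighbourhood of) $p$ within $U_0\setminus P(Z)$, the denominator $\Sigma(q) = \sum_{\ell\in I_0}\tilde\phi_\ell(q)$ satisfies $\Sigma \geq 1$ there, so the $\phi_\ell$ form a genuine smooth partition of unity. In particular $\sum_\ell \partial^\alpha \phi_\ell \equiv 0$ for every nonzero multi-index $\alpha$, which lets us subtract the reference affine function $A_i$ pointwise inside the derivative: for $p\in 2R_i\cap\sisus U_0$,
\[ \partial_j\partial_m g(p) = \sum_{\ell\in I_0} \partial_j\partial_m\bigl[\phi_\ell\cdot(A_\ell-A_i)\bigr](p). \]

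Next I would localize the sum. Only $\ell$ with $p\in\spt\tilde\phi_\ell\subset 3R_\ell$ contribute, and then $2R_i\cap 3R_\ell\neq\emptyset$ forces $10R_i\cap 10R_\ell\neq\emptyset$. Lemma~\ref{leR} then gives both $C^{-1}d(R_i)\leq d(R_\ell)\leq Cd(R_i)$ and a uniform bound on the number of such $\ell$. Moreover $p\in 3R_\ell\subset 100R_\ell$, so Lemma~\ref{leA} yields $|A_\ell(p)-A_i(p)|\leq C\sqrt\varepsilon\, d(R_\ell)\leq C\sqrt\varepsilon\, d(R_i)$, and Lemma~\ref{ledA} gives $|\partial_s(A_\ell-A_i)|\leq C\sqrt\varepsilon$ for $s\in\{1,\dotsc,k\}$.

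Now I would apply Leibniz. Since $A_\ell-A_i$ is affine, its second derivatives vanish, leaving
\[ \partial_j\partial_m\bigl[\phi_\ell(A_\ell-A_i)\bigr] = (\partial_j\partial_m\phi_\ell)(A_\ell-A_i) + (\partial_j\phi_\ell)\,\partial_m(A_\ell-A_i) + (\partial_m\phi_\ell)\,\partial_j(A_\ell-A_i). \]
A routine quotient-rule computation from \eqref{phitilde}, using $\Sigma\geq 1$ and bounded overlap (which in turn give $|\partial_s\Sigma|\leq C/d(R_\ell)$ and $|\partial_s\partial_t\Sigma|\leq C/d(R_\ell)^2$ near $p$), converts the bounds on $\tilde\phi_\ell$ into $|\partial_s\phi_\ell|\leq C/d(R_\ell)$ and $|\partial_s\partial_t\phi_\ell|\leq C/d(R_\ell)^2$. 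Each of the three terms above is therefore dominated by $C\sqrt\varepsilon/d(R_i)$, and summing over the boundedly many contributing $\ell$ yields the claim. No serious obstacle is expected; the only mildly delicate points are the verification of the quotient-rule bounds on $\phi_\ell$ from those on $\tilde\phi_\ell$ and the check that every $\ell$ that actually contributes lands in the hypothesis range $100R_\ell$ of Lemma~\ref{leA}.
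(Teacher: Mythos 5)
Your argument is exactly the standard one the paper invokes: it states that the lemma follows from \eqref{phitilde} together with Lemmas~\ref{leR}, \ref{leA} and \ref{ledA} as in \cite[Lemma 8.22]{MR1113517}, which is precisely your partition-of-unity computation (subtracting $A_i$, Leibniz, bounded overlap and comparable sizes of the contributing cubes). The proposal is correct and takes essentially the same route, merely writing out the details the paper leaves to the reference.
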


\begin{lemma}
\label{legsup}
There is a constant $C$ such that
$|g(p)| \leq CK_0\sqrt{\delta}d(Q(\ST))$ for all $p\in U_0$.
\end{lemma}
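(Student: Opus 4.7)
The plan is to reduce the desired bound to the same bound at the single point $p_0 := P(x_0)$ using the Lipschitz property of $g$, and then to use that $x_0$ itself lies very close to $V_{Q(\ST)} = X_k$ (so that its $P^\bot$-coordinate is small) together with Lemma~\ref{legappr}.

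First I would invoke Lemma~\ref{leglip}: since $|p - p_0| \leq K_0\, d(Q(\ST))$ for every $p \in U_0$, it gives
\[ |g(p)| \leq |g(p_0)| + CK_0\sqrt{\delta}\, d(Q(\ST)), \]
so it suffices to control $|g(p_0)|$. For this I would apply Lemma~\ref{legappr} to $x_0 \in Q(\ST) \subset K_0Q(\ST)$, which yields $|P^\bot(x_0) - g(p_0)| \leq C\sqrt{\varepsilon}\, h(x_0)$, and $h(x_0) \leq d(Q(\ST))$ is immediate from the definition of $h = h_\ST$ because $x_0 \in Q(\ST) \in \ST$.

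Next I would bound $|P^\bot(x_0)|$ using that $Q(\ST) \in \G \subset \G_1$: by the very definition of $\G_1$ there exists $y \in V_{Q(\ST)} = X_k$ with $d(x_0,y) \leq \varepsilon^2 d(Q(\ST))$, and $P^\bot(y) = 0$ because $y \in X_k$. The Korányi comparison $\|z\|^4 = |z'|^4 + z_{2n+1}^2 \geq |P^\bot(z)|^4$ then yields
\[ |P^\bot(x_0)| = |P^\bot(x_0) - P^\bot(y)| \leq d(x_0,y) \leq \varepsilon^2 d(Q(\ST)). \]

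Combining these gives $|g(p_0)| \leq (\varepsilon^2 + C\sqrt{\varepsilon})\, d(Q(\ST))$, and feeding this back into the Lipschitz inequality yields the claim once one recalls that $\varepsilon$ is chosen small after $\delta$ (so that $\sqrt{\varepsilon} \leq K_0\sqrt{\delta}$). No step presents a real obstacle; the only care required is the elementary Korányi--euclidean comparison for the $P^\bot$-coordinate, which is immediate from the definition of $\|\cdot\|$.
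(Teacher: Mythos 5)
Your proof is correct, but it takes a different route than the paper's. The paper's own argument never invokes Lemma~\ref{leglip} or Lemma~\ref{legappr}: it works directly with the affine maps $A_i$, choosing $x\in Q_i$ and $y\in V_{Q_i}$ with $d(x,y)\leq\varepsilon d(Q_i)$, bounding $|A_i(P(y))|=|P^\bot(y)|\leq 4\varepsilon d(Q(\ST))$ via Lemma~\ref{lePdist} and the fact that $Q_i\subset KQ(\ST)\subset X_k(\varepsilon^2 d(Q(\ST)))$, and then using the slope bound \eqref{LipB} to get $|A_i(p)|\leq CK_0\sqrt{\delta}\,d(Q(\ST))$ for every $i\in I_0$ and $p\in U_0$; the estimate for $g$ then follows from the partition of unity $\sum_i\phi_i=1$ off $P(Z)$ (the paper cites Lemma~\ref{leR} here), the values $g(p)=P^\bot(x(p))$ on $P(Z)$ being small for the same geometric reason. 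You instead bootstrap from two previously established facts about $g$ itself: the $C\sqrt{\delta}$-Lipschitz bound of Lemma~\ref{leglip} to move from an arbitrary $p\in U_0$ to the basepoint $p_0=P(x_0)$, and Lemma~\ref{legappr} together with $h(x_0)\leq d(Q(\ST))$ and the Kor\'anyi estimate $|P^\bot(x_0)|\leq d(x_0,X_k)\leq\varepsilon^2 d(Q(\ST))$ (valid because $Q(\ST)\in\G_1$ and $V_{Q(\ST)}=X_k$) to control $g(p_0)$. Since both lemmas you cite are proved before Lemma~\ref{legsup} and their proofs do not use it, there is no circularity, and your closing observation that $\varepsilon$ is fixed after $K_0$, $\delta$ and $K$ correctly absorbs the $C\sqrt{\varepsilon}$ term into $CK_0\sqrt{\delta}$. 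The trade-off: your argument is shorter and uses $g$ as a black box, while the paper's is self-contained at the level of the $A_i$ and makes the role of \eqref{LipB} explicit; both yield the stated bound.
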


\begin{proof}
Let $p\in U_0$, $i\in I_0$, $x\in Q_i$ and $y\in V_{Q_i}$ with $d(x,y)\leq\varepsilon d(Q_i)$.
By Lemma~\ref{lePdist}
\begin{align*}
  |A_i(P(y))| = |P^\bot(y)| \leq |P^\bot(x)| + |P^\bot(y-x)| \leq 3d(x,X_k) + d(x,y) \leq 4\varepsilon d(Q(\ST)).
\end{align*}
Further by \eqref{LipB} (and the Lipschitzness of $P$)
\begin{align*}
  |A_i(p)-A_i(P(y))| &\leq 2\sqrt{\delta}|p-P(y)| \leq 2\sqrt{\delta}(|p-P(x)| + |P(x)-P(y)|) \\
  &\leq 2\sqrt{\delta}((K_0+1)d(Q(\ST)) + \varepsilon d(Q_i)).
\end{align*}
Thus $|A_i(p)| \leq CK_0\sqrt{\delta}d(Q(\ST))$.
The desired estimate for $|g(p)|$ now follows from Lemma~\ref{leR}.
\end{proof}

\section{Function $\gamma$ for $\ST$}
\label{secgamma}

In this section we use the same assumptions and notations as in Section~\ref{secS}.
Using Lemma~\ref{legsup} we extend $g$ from $U_0$ to a $C\sqrt{\delta}$-Lipschitz
function on $\R^k$ supported in $U_{-1}$.
For $p\in\R^k$ and $t>0$ set
\[ \gamma(p,t) = t^{-k-1} \inf_a \int_{B^k(p,t)} |g(u)-a(u)|\, du, \]
where the infimum is taken over all affine functions $a:\R^k\to\R^{2n-k}$.
Choosing $\delta$ small one has by Lemma~\ref{leglip}
\begin{align}
\label{gammaM}
  \gamma(p,t) \leq 2t^{-k-1} \inf_M \int_{B^k(p,t)} d_e((u,g(u)),M)\, du,
\end{align}
where the infimum is taken over all $k$-planes $M\subset \R^{2n}$.
We follow \cite[Section 13]{MR1113517} and proof the next lemma.

\begin{lemma}
\label{leg}
Let $T=K_0d(Q(\ST))/2$. There exists a constant~$C=C(K_0)$ such that
\[ \int_0^T \int_{U_1}\gamma(p,t)^2\, dp\, \frac{dt}{t}\leq C\varepsilon\mu(Q(\ST))
   +C\varepsilon^{-6k}\int_{K_0Q(\ST)}\int_{h(x)/K_0}^T \beta_1(x,K_0t)^2\, \frac{dt}{t}\, d\mu x. \]
\end{lemma}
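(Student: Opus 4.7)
The plan is to adapt \cite[Section~13]{MR1113517}. I would split the integration region $U_1\times(0,T)$ according to a threshold $t\approx c_0 H(p)$: a \emph{small-scale} part where $t\leq c_0 H(p)$, on which the smoothness of $g$ controls $\gamma$, and a \emph{large-scale} complement, on which I relate $\gamma(p,t)$ to $\beta_1$ of $E$ at a nearby $(x, K_0 t)$.

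For the small-scale part, if $p\in R_i\cap U_1$ with $t\leq c_0 d(R_i)$, then $B^k(p,t)\subset 2R_i$ and Lemma~\ref{leddg} bounds the second derivatives of $g$ there by $C\sqrt\varepsilon/d(R_i)$. Replacing $g$ by its first-order Taylor polynomial at $p$ yields $\gamma(p,t)\leq C\sqrt\varepsilon\, t/d(R_i)$. Squaring, integrating $dt/t$ up to $c_0 d(R_i)$, and summing over those $R_i$ meeting $U_1$ gives
\[
  \sum_i \int_{R_i\cap U_1}\int_0^{c_0 d(R_i)}\gamma(p,t)^2\, dp\, \frac{dt}{t} \leq C\varepsilon\sum_i d(R_i)^k \leq C\varepsilon|U_0| \leq C\varepsilon\mu(Q(\ST)),
\]
absorbing the $K_0$-dependence from $|U_0|\leq C(K_0 d(Q(\ST)))^k$ into the constant via the Ahlfors regularity of $E$.

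For the large-scale part, for each $(p,t)$ with $t>c_0 H(p)$ pick a point $x\in E\cap K_0 Q(\ST)$ with $P(x)=p$. Lemma~\ref{leC2} yields $h(x)\leq CH(p)\leq C t$, so the pair $(x,t)$ lies in the admissible range $h(x)/K_0\leq t\leq T$ of the right-hand side. Let $V\in\V^k$ nearly attain the infimum in $\beta_1(x, K_0 t)$. By (F5) together with Lemmas~\ref{lekulmat} and~\ref{leQP}, $V$ is close to the horizontal plane $X_k=V_{Q(\ST)}$, so $V'$ is the graph of an affine function $a:\R^k\to\R^{2n-k}$ with small Lipschitz constant. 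Using this $a$ as a competitor in the infimum defining $\gamma(p,t)$ and converting the distances $d(y,V)$ into pointwise values $|g(P(y))-a(P(y))|$ via Lemma~\ref{legappr}, one obtains a pointwise estimate $\gamma(p,t)^2\leq C\beta_1(x, K_0 t)^2 + \textit{(small-scale error absorbed into Part~1)}$. Changing variables from $p$ to $x$ using the approximate bilipschitzness of $P$ above scale $h$ (Lemma~\ref{ledh}) then produces the required $\beta_1$-integral on the right.

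The main obstacle is establishing the pointwise comparison $\gamma(p,t)\leq C\beta_1(x, K_0 t)+\textit{error}$ together with its change of variables: one must lift a Lebesgue integral of $|g-a|$ over $B^k(p,t)$ to a $\mu$-integral of $d(\cdot,V)$ over a dilated neighbourhood $B(x,K_0 t)\cap E$, and at each stage (enlarging the ball, discretizing the projection, passing between Hausdorff and Lebesgue measures) one loses a power of $\varepsilon^{-1}$. The accumulated loss over the several comparison steps is what produces the factor $\varepsilon^{-6k}$. Additional care is required for points $u\in B^k(p,t)$ lying in or near $P(Z)$, where $g$ is defined by the separate rule $g=P^\bot\circ x(\cdot)$, and at the interface between the two parts of the splitting, where the two estimates must be glued.
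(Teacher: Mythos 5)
Your outline does reproduce the paper's skeleton (small scales handled by Lemma~\ref{leddg} plus Taylor's theorem, exactly as in Lemma~\ref{legapu}; large scales by comparing $\gamma(p,t)$ with $\beta_1$ at a nearby point and location, followed by a Fubini argument as in \eqref{apu8}--\eqref{apu9}), but two of your concrete steps fail as stated and the technical core is missing. First, for $p\in U_1\setminus P(Z)$ there is in general no $x\in E\cap K_0Q(\ST)$ with $P(x)=p$: $H(p)$ is an infimum over all of $\HE^n$ and $P(E)$ need not cover $U_1$. The paper instead picks $z(p,t)\in Q(\ST)$ with $|p-P(z(p,t))|\leq 60t$ (possible once $t>H(p)/60$, by \eqref{H}) and, crucially, lets $z$ range over $B(z(p,t),t)\cap E$ and averages $\beta_1(z,K_0t)^2$ in $z$ before squaring (\eqref{apu7}); it is this averaging, not any approximate bilipschitzness of $P|_E$, that makes the change of variables \eqref{apu8} produce the $d\mu$-integral on the right-hand side. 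Second, your claim that a near-minimizing plane $V$ for $\beta_1(x,K_0t)$ is, via (F5), Lemma~\ref{leQP} and Lemma~\ref{lekulmat}, the graph of an affine map with small Lipschitz constant is unjustified: those statements concern the planes $V_Q$, $Q\in\ST$, and a plane nearly attaining the $L^1$-infimum in \eqref{beta1} need not be close to $X_k$ in any pointwise sense. The paper never needs this; Lemma~\ref{lesg1} compares with $V_{p,t}'$ without writing it as a graph, and only the mild estimate \eqref{gammaM}, which uses the Lipschitz bound on $g$ itself, is required.

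The genuine gap is precisely what you defer as the ``main obstacle''. Passing from $\int_{B^k(p,t)}|g(u)-a(u)|\,du$ to $\int_{B(z,Ct)}d(y,V_{p,t})\,d\mu y$ cannot be done pointwise: for $u\in R_i$ the point $(u,g(u))$ is, up to $C\sqrt{\varepsilon}\,d(R_i)$ (Lemma~\ref{leA}, \eqref{apu3}), a point of the plane $V_{Q_i}'$, and points of $V_{Q_i}$ near $Q_i$ need not be near $E$ at all, since $\G_1$ only gives $KQ_i\subset V_{Q_i}(\varepsilon^2 d(Q_i))$ and not the reverse inclusion; moreover the tilt between $V_{Q_i}$ and $V_{p,t}$ must be controlled from $\mu$-averages over $E$. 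The paper's device is Lemma~\ref{lesg2}: using the $k+1$ well-spread points of Lemma~\ref{leL} and balls of radius $\varepsilon d(Q_i)$ in $E$, the supremum of $d_e(\cdot\,,V_{p,t}')$ over $V_{Q_i}\cap Q_i(Cd(Q_i))$ is bounded by $C\varepsilon d(R_i)+C\varepsilon^{-3k}\bigl(\dashint_{2Q_i}d_e(x',V_{p,t}')^{1/3}\,d\mu x\bigr)^3$; this single factor $\varepsilon^{-3k}$, squared in \eqref{apu7}, is the entire source of $\varepsilon^{-6k}$ --- it is not an accumulation of losses over several steps. The $1/3$-power is in turn forced by the summation Lemma~\ref{lesg3}: the cubes $2Q_i$, $i\in I(p,t)$, can overlap with unbounded multiplicity, and the counting functions $N_i$ together with H\"older's inequality are what allow the sum of $\LEK(R_i)$ times these averages to be dominated by $\int_{B(z,Ct)}d_e(x',V_{p,t}')\,d\mu x$. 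A naive substitute (pick one $y\in Q_i$ per $u$ and use Lemma~\ref{legappr} plus the Lipschitz bound on $g$) both fails to reach $E$ from the plane in the required averaged sense and produces an error of size $\sqrt{\delta}\,d(R_i)$ rather than $\sqrt{\varepsilon}\,d(R_i)$, which is too large for the term $C\varepsilon\mu(Q(\ST))$ in the statement. Without these two mechanisms your proposal does not prove the lemma.
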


Notice that $2R_i \subset U_0$ for all $i\in I_1$ by \eqref{H} and \eqref{defR}.
Using Lemma~\ref{leddg} and Taylor's theorem one gets (see \cite[Lemma 13.7]{MR1113517})

\begin{lemma}
\label{legapu}
There is a constant $C$ such that
\[ \sum_{i\in I_1} \int_0^{d(R_i)} \int_{R_i} \gamma(p,t)^2\, dp\, \frac{dt}{t} \leq C\varepsilon \mu(Q(\ST)). \]
\end{lemma}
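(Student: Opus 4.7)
The plan is to bound $\gamma(p,t)$ pointwise via Taylor's theorem using the $C^2$ control from Lemma~\ref{leddg}, then compute the two integrals explicitly, and finally sum over $i\in I_1$ using the essential disjointness of the $R_i$.

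First I fix $i\in I_1$, $p\in R_i$ and $t\in(0,d(R_i)]$, and choose as the competing affine map the first-order Taylor polynomial $a_p(u)=g(p)+\nabla g(p)\cdot(u-p)$. Since $p\in R_i$ and $t\leq d(R_i)$, the ball $B^k(p,t)$ sits inside the Euclidean $d(R_i)$-neighbourhood of $R_i$, i.e.\ inside $2R_i$; by the remark preceding the statement, $2R_i\subset U_0$, so Lemma~\ref{leddg} applies a.e.\ on $B^k(p,t)$ and yields $|\partial_j\partial_m g|\leq C\sqrt{\varepsilon}/d(R_i)$ there. Taylor's remainder then gives
\[
  |g(u)-a_p(u)|\leq \frac{C\sqrt{\varepsilon}}{d(R_i)}|u-p|^2\leq \frac{C\sqrt{\varepsilon}}{d(R_i)}t^2\qquad\text{for $u\in B^k(p,t)$,}
\]
and plugging this into the definition of $\gamma$ together with $\LEK(B^k(p,t))\leq Ct^k$ produces $\gamma(p,t)\leq C\sqrt{\varepsilon}\,t/d(R_i)$.

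Second, the $t$-integration is immediate:
\[
  \int_0^{d(R_i)}\gamma(p,t)^2\,\frac{dt}{t}\leq C\varepsilon\int_0^{d(R_i)}\frac{t\,dt}{d(R_i)^2}=\frac{C\varepsilon}{2},
\]
uniformly in $p\in R_i$, and then averaging over $p\in R_i$ (using $\LEK(R_i)\leq Cd(R_i)^k$) gives a contribution of at most $C\varepsilon\,d(R_i)^k$ per cube. Summing over $i\in I_1$ reduces the lemma to the bound $\sum_{i\in I_1}d(R_i)^k\leq C\mu(Q(\ST))$. Since the $R_i$ have pairwise disjoint interiors, this sum equals (up to a $k$-dimensional constant) the Lebesgue measure of $\bigcup_{i\in I_1}R_i$. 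For every $i\in I_1$, pick $u\in R_i\cap U_1$; \eqref{RH} and the $1$-Lipschitzness of $H$ together with $H(P(x_0))\leq d(Q(\ST))$ (from \eqref{H}, taking $Q(\ST)$ itself) give $d(R_i)\leq CK_0\,d(Q(\ST))$, so $R_i\subset B^k(P(x_0),2K_0\,d(Q(\ST)))$. Hence $\sum_{i\in I_1}\LEK(R_i)\leq CK_0^k\,d(Q(\ST))^k\leq C\mu(Q(\ST))$ by the regularity bound in \eqref{D}.

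The whole argument is essentially bookkeeping; the one point requiring genuine care is ensuring that the $C^2$-bound of Lemma~\ref{leddg} applies uniformly over the ball $B^k(p,t)$ used to compute $\gamma(p,t)$, which is exactly the reason for the inclusion $B^k(p,t)\subset 2R_i\subset U_0$ established for $i\in I_1$ in the remark.
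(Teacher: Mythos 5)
Your argument is correct and takes essentially the same route as the paper, which only sketches this step by invoking Lemma~\ref{leddg} and Taylor's theorem (following Lemma~13.7 of David--Semmes): the pointwise bound $\gamma(p,t)\leq C\sqrt{\varepsilon}\,t/d(R_i)$ on $B^k(p,t)\subset 2R_i\subset U_0$, the elementary $t$-integration, and the packing bound $\sum_{i}\LEK(R_i)\leq C\mu(Q(\ST))$ via disjointness and \eqref{D} are exactly the intended steps. (As in the paper's analogous estimate \eqref{apu9}, the constant picks up a harmless dependence on $K_0$ through the containment $R_i\subset B^k(P(x_0),2K_0d(Q(\ST)))$, which is absorbed in Lemma~\ref{leg}.)
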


We now assume that $p\in U_1$ and $H(p)/60 < t \leq T$.
Choose $z(p,t)\in Q(\ST)$ such that $|p-P(z(p,t))| \leq 60t$ (see \eqref{H})
and let $z\in B(z(p,t),t)\cap E$.
Further let $V_{p,t}\in\V^k$ be such that
\begin{align}
\label{Vpt}
  \int_{B(z,K_0t)} d(x,V_{p,t})\, d\mu x \leq 2(K_0t)^{k+1}\beta_1(z,K_0t).
\end{align}
By \eqref{gammaM}
\begin{align}
\label{apu1}
  2^{-1}t^{k+1}\gamma(p,t) \leq \int_{B^k(p,t)} d_e((u,g(u)),V_{p,t}')\, du.
\end{align}

If $u\in B^k(p,t)\cap P(Z)$ then $P^{-1}(\{u\})\cap Q(\ST)=\{x\}$, where $x'=(u,g(u))$ and $h(x)=0$.
Since $z\in K_0Q(\ST)$ (by choosing $K_0\geq 2$),
\[ d(x,z)\leq (1+2\delta)|u-P(z)| \leq (1+2\delta)(|u-p|+|p-P(z(p,t))|+t)\leq Ct \]
by Lemma~\ref{ledh}.
Thus
\begin{align}
\label{apu2}
  \int_{B^k(p,t)\cap P(Z)} d_e((u,g(u)),V_{p,t}')\, du \leq \int_{B(z,Ct)} d_e(x',V_{p,t}')\, d\mu x.
\end{align}

Let $I(p,t) = \{\, i\in I_0\, : R_i\cap B^k(p,t)\neq\emptyset\, \}$.

\begin{lemma}
\label{lesg1}
There is a constant $C=C(K_0)$ such that for any $i\in I(p,t)$
\[ d_e((u,g(u)),V_{p,t}') \leq d_e((u,g(u)),V_{Q_i}') + \sup\{\, d_e(w',V_{p,t}')\, :\, w\in V_{Q_i} \cap Q_i(Cd(Q_i))\, \} \]
for all $u\in R_i \cap U_0$.
\end{lemma}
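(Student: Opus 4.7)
The plan is to produce the required bound by the triangle inequality through a carefully chosen point of $V_{Q_i}$. Specifically, I would let $w \in V_{Q_i}$ be the unique point with $w' = P^e_{V_{Q_i}'}((u,g(u)))$, so that $|(u,g(u)) - w'| = d_e((u,g(u)), V_{Q_i}')$ by definition. Then
\[ d_e((u,g(u)), V_{p,t}') \leq |(u,g(u)) - w'| + d_e(w', V_{p,t}'), \]
and the conclusion follows once we verify that $w \in V_{Q_i} \cap Q_i(C\,d(Q_i))$ for some $C = C(K_0)$.

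To control the Heisenberg distance from $w$ to $Q_i$, I would fix an arbitrary $y \in Q_i$ and use that $Q_i \in \G_1$ to pick $z \in V_{Q_i}$ with $d(y,z) \leq \varepsilon^2 d(Q_i)$; in particular $|y' - z'| \leq \varepsilon^2 d(Q_i)$. Since $w$ and $z$ both lie in $V_{Q_i}$, identity \eqref{dV} gives $d(w,z) = |w' - z'|$, so it suffices to bound $|w' - y'|$ in the euclidean sense.

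For this estimate I would split
\[ |w' - y'| \leq |w' - (u, A_i(u))| + |(u, A_i(u)) - y'|. \]
The first summand is at most $2|g(u) - A_i(u)|$ since $(u, A_i(u)) \in V_{Q_i}'$, and $|g(u) - A_i(u)| \leq C\sqrt{\varepsilon}\, d(R_i)$ by the definition of $g$ as the partition-of-unity combination of the $A_j$, together with Lemma~\ref{leA} and the bounded overlap provided by Lemma~\ref{leR}. For the second summand I would write $(u, A_i(u)) - y' = (u - P(y),\, A_i(u) - P^{\bot}(y))$ and bound each block separately: $|u - P(y)| \leq C d(R_i)$ by \eqref{Qi2}; $|A_i(u) - A_i(P(y))| \leq 2\sqrt{\delta}\,|u - P(y)|$ by \eqref{LipB}; and $|A_i(P(y)) - P^{\bot}(y)| \leq (2+\delta)\varepsilon\, d(Q_i)$ from (F5), exactly as in the proof of Lemma~\ref{leglip}. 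The comparison $d(R_i) \sim d(Q_i)$ from \eqref{Qi1}, whose constant depends on $K_0$ via $C_0$, then produces $d(w,y) \leq C(K_0)\, d(Q_i)$ and puts $w$ inside the prescribed neighbourhood.

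The main obstacle is purely bookkeeping: one must verify that the final constant depends only on $K_0$ (and the already absorbed $k$ and $C_E$) and not on $\varepsilon$ or $\delta$. This is automatic because the $\sqrt{\varepsilon}$ and $\sqrt{\delta}$ factors in the above estimates are all bounded by $1$, and the genuine $K_0$-dependence enters only through the comparison $d(R_i) \asymp d(Q_i)$ in \eqref{Qi1}. No delicate cancellation is required.
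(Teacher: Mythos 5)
Your argument is correct and follows the paper's own proof in all essentials: the same nearest-point $w$ with $w'=P^e_{V_{Q_i}'}((u,g(u)))$, the same triangle inequality, and the same ingredients (Lemma~\ref{leA} with the bounded overlap from Lemma~\ref{leR}, \eqref{Qi1}, \eqref{Qi2}, \eqref{LipB}) to place $w$ in $Q_i(Cd(Q_i))$ with $C=C(K_0)$. The only cosmetic difference is in the last verification: you transfer the euclidean bound on $|w'-y'|$ to the Heisenberg distance via \eqref{dV} (using all $2n$ horizontal coordinates together with the estimate $|A_i(P(y))-P^{\bot}(y)|\leq(2+\delta)\varepsilon d(Q_i)$ from the proof of Lemma~\ref{leglip}), whereas the paper controls only the $P$-coordinates and invokes (F5); both routes are equally valid and use the same inputs.
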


\begin{proof}
Let $i\in I(p,t)$ and $u\in R_i$. Denote $y=(u,g(u))$. Let $w\in V_{Q_i}$ be such that $|y-w'| = d_e(y,V_{Q_i}')$.
Further let $v\in V_{p,t}$ be such that $|w'-v'| = d_e(w',V_{p,t}')$. Then
\[ d_e(y,V_{p,t}') \leq |y-v'| \leq |y-w'|+|w'-v'| \leq d_e(y,V_{Q_i}') + d_e(w',V_{p,t}'). \]
By the definition of $g$ and Lemma~\ref{leA}
\[ |u-P(w)| \leq |y-w'| \leq |y-(u,A_i(u))| \leq C\sqrt{\varepsilon}d(R_i). \]
Choosing $q\in Q_i$ and $v_q\in V_{Q_i}$ with $d(v_q,q)\leq \varepsilon d(Q_i)$ one has
by (F5), \eqref{Qi2} and \eqref{Qi1}
\begin{align*}
  d(w,q) &\leq d(w,v_q)+d(v_q,q) \leq (1+\delta)|P(w)-P(v_q)|+d(v_q,q) \\
  &\leq (1+\delta)(|P(w)-u| + |u-P(q)| + |P(q)-P(v_q)|)+d(v_q,q) \leq Cd(Q_i).
\end{align*}
\end{proof}

For any $i\in I(p,t)$
\begin{align}
\label{apu3}
  \int_{B^k(p,t)\cap R_i} d_e((u,g(u)),V_{Q_i}')\, du \leq \int_{B^k(p,t)\cap R_i} |g(u)-A_i(u)|\, du \leq C\sqrt{\varepsilon}d(R_i)^{k+1}
\end{align}
by Lemma~\ref{leA} and the definitions of $A_i$ and $g$. Recall that $B^k(p,t) \subset U_0$ (because $p\in U_1$ and $t\leq T$).

\begin{lemma}
\label{lesg2}
For any constant $C'$ there is a constant~$C$ such that for any $i\in I(p,t)$
\[ d_e(w',V_{p,t}') \leq C\varepsilon d(R_i) + C\varepsilon^{-3k}\left(\dashint_{2Q_i} d_e(x',V_{p,t}')^ {1/3}\, d\mu x \right)^3 \]
for all $w\in V_{Q_i} \cap Q_i(C'd(Q_i))$.
\end{lemma}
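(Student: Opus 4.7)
Write $D=d(Q_i)$ and set
\[ J=\Bigl(\dashint_{2Q_i} d_e(x',V_{p,t}')^{1/3}\, d\mu x\Bigr)^3, \]
so the target reads $d_e(w',V_{p,t}')\leq C\varepsilon D + C\varepsilon^{-3k}J$. The plan is to reduce the bound for an arbitrary $w\in V_{Q_i}\cap Q_i(C'D)$ to a pointwise bound at $k+1$ base points. I would apply Lemma~\ref{leL} to $Q_i\in\G_1$ to obtain $y_0,\dots,y_k\in V_{Q_i}\cap Q_i(\varepsilon^2 D)$ with $d(y_{i+1},L_i)>cD$, making the $y_j$ affinely independent in the $k$-plane $V_{Q_i}$. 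An intermediate-point argument through $Q_i$ shows $d(w,y_j)\leq (C'+1+\varepsilon^2)D$, and since $d(w,y_j)=|w'-y_j'|$ by \eqref{dV}, a standard determinantal estimate bounds the unique barycentric coordinates $w=\sum_{j=0}^k\alpha_j y_j$, $\sum\alpha_j=1$, by $|\alpha_j|\leq C=C(C',c,k)$. Since $V_{p,t}'$ is a $k$-dimensional affine subspace of $\R^{2n}$, choosing nearest points $v_j\in V_{p,t}'$ to each $y_j'$ and using $\sum\alpha_j v_j\in V_{p,t}'$ yields
\[ d_e(w',V_{p,t}')\leq \sum_{j=0}^k|\alpha_j|\,d_e(y_j',V_{p,t}'), \]
so it suffices to prove the claimed inequality with $y_j$ in place of $w$, for each $j$.

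For the pointwise bound, fix $j$ and write $\eta=d_e(y_j',V_{p,t}')$. If $\eta\leq 8\varepsilon D$ the first term already dominates, so I would assume $\eta>8\varepsilon D$ and run a Chebyshev-type ball argument. Set $r_0=\varepsilon D$ and pick $z_j\in Q_i$ with $d(y_j,z_j)\leq\varepsilon^2 D$. Then $B(z_j,r_0)\cap E\subset 2Q_i$ (since $z_j\in Q_i$ and $r_0\leq D$), with $\mu(B(z_j,r_0)\cap E)\geq C^{-1}r_0^k$ by $k$-regularity. Because the horizontal projection $x\mapsto x'$ is $1$-Lipschitz from $(\HE^n,d)$ to $(\R^{2n},d_e)$, any $x\in B(z_j,r_0)\cap E$ satisfies $|x'-y_j'|\leq r_0+\varepsilon^2 D\leq 2r_0$, whence $d_e(x',V_{p,t}')\geq\eta-2r_0\geq\eta/2$. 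Combined with $\mu(2Q_i)\leq CD^k$ this gives
\[ \dashint_{2Q_i} d_e(x',V_{p,t}')^{1/3}\, d\mu x\geq C^{-1}(r_0/D)^k(\eta/2)^{1/3}=C^{-1}\varepsilon^k(\eta/2)^{1/3}, \]
and cubing yields $\eta\leq C\varepsilon^{-3k}J$, as required.

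The scale $r_0=\varepsilon D$ is chosen precisely so that the loss $(D/r_0)^{3k}$ arising after cubing produces the required factor $\varepsilon^{-3k}$. I expect the main bookkeeping to be the volumetric bound on $|\alpha_j|$ in the affine-combination reduction, but this is a routine barycentric estimate depending only on $C'$, $k$, $C_E$, and the separation constant $c$ of Lemma~\ref{leL}.
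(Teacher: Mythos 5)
Your proof is correct and follows essentially the same route as the paper: reduce to the spanning points $y_0,\dotsc,y_k$ of Lemma~\ref{leL} via bounded (barycentric) coefficients, then run a small-ball average of $d_e(x',V_{p,t}')^{1/3}$ at scale $\varepsilon d(Q_i)$ inside $2Q_i$, which is exactly where the $\varepsilon^{-3k}$ comes from. The only cosmetic difference is that you treat each $y_j$ with a case split, whereas the paper works with the maximizing $y_{j_0}$ and integrates the pointwise inequality directly.
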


\begin{proof}
Let $i\in I(p,t)$ and $w\in V_{Q_i} \cap Q_i(C'd(Q_i))$.
If $y_0,\dotsc,y_k\in V_{Q_i} \cap Q_i(\varepsilon d(Q_i))$ are as in Lemma~\ref{leL} with $Q=Q_i$, then obviously (by \eqref{dV})
\[ d_e(w',V_{p,t}') \leq Cd_e(y_{j_0}',V_{p,t}'), \]
where $d_e(y_{j_0}',V_{p,t}') = \max\{ d_e(y_j',V_{p,t}')\, :\, j\in\{0,\dotsc,k \}\}$.
Let $z_0\in Q_i \cap B(y_{j_0},2\varepsilon d(Q_i))$. Then
\[ d_e(y_{j_0}',V_{p,t}') \leq d_e(x',V_{p,t}') + 3\varepsilon d(Q_i) \] 
for all $x\in B := B(z_0,\varepsilon d(Q_i))$, and we have
\begin{align*}
  \mu(B)d_e(w',V_{p,t}')^{1/3}
  &\leq C\int_B \left(d_e(x',V_{p,t}') + 3\varepsilon d(Q_i)\right)^{1/3}\, d\mu x  \\
  &\leq C\mu(B)\left(3\varepsilon d(Q_i)\right)^{1/3} + C\int_B d_e(x',V_{p,t}')^{1/3}\, d\mu x.
\end{align*}
The claim now follows from the regularity, \eqref{D} and \eqref{Qi1}.
\end{proof}

\begin{lemma}
\label{lesg3}
There is a constant~$C$ such that
\[ \sum_{i\in I(p,t)}\LEK(R_i)\left(\dashint_{2Q_i} d_e(x',V_{p,t}')^ {1/3}\, d\mu x \right)^3 \leq C\int_{B(z,Ct)} d_e(x',V_{p,t}')\, d\mu x. \]
(Here $\LEK$ is the Lebesgue measure on $\R^k$.)
\end{lemma}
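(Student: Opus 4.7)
My approach is to combine a Jensen step, a size comparison via $k$-regularity, the containment $\bigcup_{i\in I(p,t)} 2Q_i\subset B(z,Ct)$, and a pointwise bounded overlap estimate for the family $\{2Q_i\}_{i\in I(p,t)}$ in $E$. Writing $f(x)=d_e(x',V_{p,t}')\geq 0$, the concavity of $s\mapsto s^{1/3}$ on $[0,\infty)$ and Jensen's inequality give
\[ \left(\dashint_{2Q_i} f^{1/3}\, d\mu\right)^{3} \leq \dashint_{2Q_i} f\, d\mu. \]
Combined with $\LEK(R_i)\sim d(R_i)^k\sim d(Q_i)^k\sim\mu(2Q_i)$, which follows from the $k$-regularity of $\mu$ together with \eqref{Qi1}, each summand on the left of the claimed inequality is bounded by $C\int_{2Q_i} f\, d\mu$.

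For the containment, pick $q\in R_i\cap B^k(p,t)$; the $1$-Lipschitzness of $H$ together with $H(p)<60t$ and \eqref{RH} gives $d(R_i)\leq H(q)/10\leq Ct$, so $d(Q_i)\leq Ct$ by \eqref{Qi1}. For $x\in 2Q_i$ and $y\in Q_i$, \eqref{Qi2} gives $|P(y)-q|\leq Cd(R_i)$, hence $|P(y)-p|\leq Ct$ and $|P(y)-P(z(p,t))|\leq Ct$ by the choice of $z(p,t)$. Since $y,z(p,t)\in K_0Q(\ST)$, Lemma~\ref{ledh} yields $d(y,z(p,t))\leq Ct$ (the alternative case $d(y,z(p,t))\leq D^{-2}h(y)$ is itself covered by $h(y)\leq d(Q_i)\leq Ct$). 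The triangle inequality with $d(x,y)\leq 2d(Q_i)$ and $d(z(p,t),z)\leq t$ then gives $d(x,z)\leq Ct$.

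The heart of the argument is the pointwise overlap bound $\sum_{i\in I(p,t)}\chi_{2Q_i}(x)\leq N$. If $x\in 2Q_i\cap 2Q_j$, then $|P(x)-P(q)|\leq d(x,q)\leq 2d(Q_i)$ for any $q\in Q_i$, and combining this with \eqref{Qi2} and $d(Q_i)\leq Cd(R_i)$ from \eqref{Qi1} gives $P(x)\in C_1R_i\cap C_1R_j$ for a universal constant $C_1$. Choosing $p_i\in R_i$ and $p_j\in R_j$ nearest to $P(x)$, the lower bound $H(p_l)\geq 20\,d(R_l)$ from \eqref{defR}, the upper bound $H(p_l)\leq 60\,d(R_l)$ from \eqref{RH}, and the $1$-Lipschitzness of $H$ (giving $|H(p_i)-H(p_j)|\leq|p_i-p_j|\leq C_1(d(R_i)+d(R_j))$) combine to force $(20-C_1)d(R_i)\leq(60+C_1)d(R_j)$ and the symmetric inequality, in the spirit of Lemma~\ref{leR}. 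Provided the universal constants are arranged so that $C_1<20$, the diameters $d(R_i)$ and $d(R_j)$ are comparable; since the $R_i$ have pairwise disjoint interiors in $\R^k$, only boundedly many dyadic cubes of comparable size can contain $P(x)$ in their $C_1$-dilate, which gives the desired $N$.

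Assembling the three ingredients,
\[ \sum_{i\in I(p,t)}\LEK(R_i)\left(\dashint_{2Q_i} f^{1/3}\, d\mu\right)^{3} \leq C\sum_{i}\int_{2Q_i} f\, d\mu \leq CN\int_{B(z,Ct)} f\, d\mu, \]
which is the claim. The main obstacle is the overlap step: Lemma~\ref{leR} only directly delivers comparability of $d(R_i)$ and $d(R_j)$ in the regime $10R_i\cap 10R_j\neq\emptyset$, so careful bookkeeping of the universal constants coming out of \eqref{Qi1} and \eqref{Qi2} is required to make the generalization to $P(x)\in C_1R_i\cap C_1R_j$ actually go through.
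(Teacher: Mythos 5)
Your first and last steps (Jensen together with $\LEK(R_i)\sim d(R_i)^k\sim\mu(2Q_i)$, and the containment $\bigcup_{i\in I(p,t)}2Q_i\subset B(z,Ct)$ via $d(R_i)\le Ct$, \eqref{Qi2} and Lemma~\ref{ledh}) are fine and essentially what the paper does. The genuine gap is the overlap step, which you yourself flag: you need the dilation constant $C_1$ (such that $x\in 2Q_i$ forces $P(x)\in C_1R_i$) to satisfy $C_1<20$, and no bookkeeping can arrange this. Indeed $C_1$ is inherited from \eqref{Qi1}--\eqref{Qi2}, whose constants are forced by \eqref{H} and \eqref{RH} to be of order $60$ (the cube $Q_i$ is produced precisely from $H\le 60\,d(R_i)$ on $10R_i$), plus an extra $2d(Q_i)\le Cd(R_i)$ for the passage from $Q_i$ to $2Q_i$; the thresholds $20$ and $60$ are fixed by \eqref{defR} and \eqref{RH} and are not at your disposal. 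Hence $(20-C_1)d(R_i)\le(60+C_1)d(R_j)$ is vacuous. Lemma~\ref{leR} works only because there a single point lies in $10R_i\cap 10R_j$, where the two-sided bounds $10d(R)\le H\le 60d(R)$ apply directly; for $C_1$-dilates with $C_1\gg 10$ the Lipschitz correction $C_1(d(R_i)+d(R_j))$ swamps the lower bound $20d(R_i)$. What the constants actually give when $2Q_i\cap 2Q_j\neq\emptyset$ is only the one-sided statement that the \emph{smaller} of $R_i,R_j$ lies in a bounded dilate of the larger; at a point $x$ with $h(x)$ (equivalently $H(P(x))$) much smaller than $t$, cubes $R_i$ of many different dyadic sizes may a priori contribute, so no uniform bound on $\sum_i\chi_{2Q_i}(x)$ is available, and your final ``$\le CN\int_{B(z,Ct)}f\,d\mu$'' is unjustified.

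The paper's proof is built exactly to avoid bounded overlap. It introduces the counting functions $N_i=\sum_{j\in J(i)}\chi_{2Q_j}$, where $J(i)$ consists of the $j$ with $d(R_j)\le d(R_i)$ and $2Q_j\cap 2Q_i\neq\emptyset$; the one-sided containment just mentioned, together with the disjointness of the interiors of the $R_j$, gives $\int_{2Q_i}N_i\,d\mu\le C\LEK(R_i)$. Then, instead of Jensen, H\"older is applied to $f^{1/3}=\bigl(f^{1/3}N_i^{-2/3}\bigr)N_i^{2/3}$, which inserts the weight $N_i^{-2}$ as in \eqref{apu4}; the sum over $i$ is closed by the observation that if $x\in 2Q_l\cap 2Q_m$ and $N_l(x)=N_m(x)$ then $d(R_l)=d(R_m)$, so for each value of $N_i(x)$ only boundedly many cubes contribute and $\sum_i\chi_{2Q_i}(x)N_i(x)^{-2}\le C\sum_m m^{-2}\le C$ as in \eqref{apu5}. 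To salvage your route you would have to prove two-sided comparability of $d(R_i)$ and $d(R_j)$ whenever $2Q_i\cap 2Q_j\neq\emptyset$, which the stated constants do not give; otherwise you need a weighting device of this kind. (A minor additional remark: your per-term bound $\LEK(R_i)\le C\mu(2Q_i)$ uses the lower bound of \eqref{Qi1}, whose constant is $C_0=CK_0$, so your constant picks up a $K_0$-dependence that the paper's version of this step avoids.)
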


\begin{proof}
For any $i\in I$ define $N_i:\HE^n\to\R$ by setting
\begin{align*}
  N_i = \sum_{j\in J(i)} \chi_{2Q_j}
\end{align*}
where $J(i) = \{\, j\in I\, :\, \text{$d(R_j) \leq d(R_i)$ and $2Q_i \cap 2Q_j\neq\emptyset$}\, \}$.
Notice that $N_i(x) \geq 1$ for all $x\in 2Q_i$, $i\in I$.
Let $l,m\in I$. If $2Q_l \cap 2Q_m\neq\emptyset$ and $d(R_l)\leq d(R_m)$, then
$d(R_l \cup R_m) \leq d(P(2Q_l) \cup R_l)+d(P(2Q_m) \cup R_m) \leq Cd(R_m)$
by \eqref{Qi1} and \eqref{Qi2}. Hence by \eqref{D}
\begin{align*}
  \int_{2Q_i} N_i(x)\, d\mu x \leq \sum_{j\in J(i)} \mu(2Q_j) \leq C\sum_{j\in J(i)} \LEK(R_j) \leq C\LEK(R_i),
\end{align*}
and further by H\"older's inequality
\begin{align}
\label{apu4}
  \left( \int_{2Q_i} d_e(x',V_{p,t}')^ {1/3}\, d\mu x \right)^3 \leq C\LEK(R_i)^2\int_{2Q_i} d_e(x',V_{p,t}') N_i(x)^{-2}\, d\mu x
\end{align}
for any $i\in I$.
If $x\in 2Q_l \cap 2Q_m$ and $N_l(x)=N_m(x)$ for $l,m\in I$, then by the definition necessarily $d(R_l)=d(R_m)$ and further (see above)
\begin{align}
\label{apu5}
  \sum_{i\in I} \chi_{2Q_i}(x)N_i(x)^{-2} = \sum_{m=1}^\infty\biggl(m^{-2}\sum_{i\in J(x,m)} \chi_{2Q_i}(x) \biggr) \leq C,
\end{align}
where $J(x,m)=\{ i\in I\, :\, N_i(x)=m \}$.
By \eqref{apu4}, \eqref{apu5} and \eqref{D}
\begin{align}
\label{apu6}
\begin{split}
  &\sum_{i\in I(p,t)}\LEK(R_i)\left(\dashint_{2Q_i} d_e(x',V_{p,t}')^ {1/3}\, d\mu x \right)^3  \\
  &\leq C\sum_{i\in I(p,t)}\int_{2Q_i} d_e(x',V_{p,t}')N_i(x)^{-2}\, d\mu x
  \leq C\int_{\bigcup_{i\in I(p,t)}2Q_i} d_e(x',V_{p,t}')\, d\mu x.
\end{split}
\end{align}
Let $i\in I(p,t)$ and $x\in 2Q_i$.
Since $H(u) \leq H(p)+t \leq 61t$ for all $u\in B(p,t)$, one has $d(R_i)\leq 4t$ by \eqref{defR}.
Thus by \eqref{Qi1} and \eqref{Qi2}
\begin{align*}
 |P(x)-P(z)| \leq d(P(2Q_i)\cup R_i) + d(R_i\cup\{p\}) + |p-P(z(p,t))| + t \leq Ct.
\end{align*}
Since $h(x) \leq 2d(Q_i) \leq Ct$ (by the definition of $h$ and \eqref{Qi1}), Lemma~\ref{ledh} implies $d(x,z)\leq Ct$.
(Namely, if $d(x,z) \geq Ct$ then $d(x,z) \leq (1+2\delta)|P(x)-P(z)|$ by Lemma~\ref{ledh}.)
The claim now follows from \eqref{apu6}.
\end{proof}

Now \eqref{apu1}, \eqref{apu2}, Lemma~\ref{lesg1}, \eqref{apu3}, Lemma~\ref{lesg2}, Lemma~\ref{lesg3} and \eqref{Vpt} give
(by choosing $\varepsilon < 1$ and $K_0$ large enough)
\[ \gamma(p,t) \leq C\varepsilon^{-3k}\beta_1(z,K_0t) + C\sqrt{\varepsilon}t^{-k-1}\sum_{i\in I(p,t)} d(R_i)^{k+1} \]
and further by the regularity
\begin{align}
\label{apu7}
  \gamma(p,t)^2 \leq C\varepsilon^{-6k}t^{-k}\int_{B(z(p,t),t)} \beta_1(z,K_0t)^2\, d\mu z + C\varepsilon t^{-2(k+1)}\biggl(\sum_{i\in I(p,t)} d(R_i)^{k+1}\biggr)^2
\end{align}
for some constant~$C=C(K_0)$.

If $p\in U_1$, $H(p)/60<t\leq T$ and $z\in B(z(p,t),t)\cap E$ then
$z\in K_0Q(\ST)$ and $|p-P(z)| \leq |p-P(z(p,t))| + |P(z(p,t))-P(z)| \leq Ct$.
We also have $h(z) \leq Ct$.
(Namely, choose $\tilde{u}\in P^{-1}(\{p\})$ with $h(\tilde{u})\leq H(p)+t$.
Then let $u\in Q(\ST)$ be with $d(u,\tilde{u}) \leq h(\tilde{u}) + t$.
If $d(z,u)>h(u)$ then $d(z,u)\leq 2|P(z)-P(u)| \leq 2(|P(z)-p|+d(u,\tilde{u}))\leq Ct$ by Lemma~\ref{ledh}.
In any case $h(z)\leq h(u)+d(z,u) \leq Ct$.)
Thus
\begin{align}
\label{apu8}
\begin{split}
  &\int_{U_1}\int_{H(p)/60}^T t^{-k}\int_{B(z(p,t),t)} \beta_1(z,K_0t)^2\, d\mu z\, \frac{dt}{t}\, dp  \\
  &\leq \int_{K_0Q(\ST)}\int_{h(z)/C}^T t^{-k}\biggl(\int_{B^k(P(z),Ct)}\, dp\biggr) \beta_1(z,K_0t)^2\, \frac{dt}{t}\, d\mu z  \\
  &\leq C\int_{K_0Q(\ST)}\int_{h(z)/C}^T \beta_1(z,K_0t)^2\, \frac{dt}{t}\, d\mu z
\end{split}
\end{align}
Since $d(p,R_i)\leq t$ and $d(R_i)\leq Ct$ if $p\in U_1$, $H(p)/60<t\leq T$ and $i\in I(p,t)$ (as mentioned after \eqref{apu6})
\begin{align}
\label{apu9}
\begin{split}
  &C^{-1}\int_{U_1}\int_{H(p)/60}^T t^{-2(k+1)}\biggl(\sum_{i\in I(p,t)} d(R_i)^{k+1}\biggr)^2\, \frac{dt}{t}\, dp  \\
  &\leq \int_{U_1}\int_{H(p)/60}^T \sum_{i\in I(p,t)} d(R_i)^{k+1}\, \frac{dt}{t^{k+2}}\, dp
  \leq \sum_{i\in I_0} d(R_i)^{k+1} \int_{d(R_i)/C}^T\int_{R_i(t)}\, dp\, \frac{dt}{t^{k+2}}  \\
  &\leq \sum_{i\in I_0} d(R_i)^{k+1} \int_{d(R_i)/C}^\infty (d(R_i)+2t)^k\, \frac{dt}{t^{k+2}}
  \leq C\sum_{i\in I_0} d(R_i)^k \leq C\mu(Q(\ST)),
\end{split}
\end{align}
where the last constant $C$ depends on $K_0$.
The last inequality follows from \eqref{D} because $d(R_i)\leq K_0d(Q(\ST))/20$ by \eqref{H} and \eqref{defR}
and therefore $R_i\subset B^k(P(x_0),2K_0d(Q(\ST)))$ for any $i\in I_0$.
From \eqref{apu7}, \eqref{apu8} and \eqref{apu9} one now gets
\begin{align*}
  \int_{U_1}\int_{H(p)/60}^T \gamma(p,t)^2\, \frac{dt}{t}\, dp
  \leq C\varepsilon \mu(Q(\ST))
  + C\varepsilon^{-6k}\int_{K_0Q(\ST)}\int_{h(z)/C}^T \beta_1(z,K_0t)^2\, \frac{dt}{t}\, d\mu z.
\end{align*}
Combining this with Lemma~\ref{legapu} we get Lemma~\ref{leg} because
$U_1 \subset P(Z) \cup \bigcup_{i\in I_1} R_i$, $H(P(Z))=\{0\}$ and $60t>H(p)$ by \eqref{RH} whenever $p\in R_i$ with $d(R_i)<t$ and $i\in I_0$.

\section{Estimate for $\ST\in\F_3$}
\label{secF3}

In this section we use the same assumptions and notations as in Section~\ref{secF}.
We follow \cite[Sections 14 and 11]{MR1113517} or \cite[Section 5]{MR1709304} and prove the next lemma.

\begin{lemma}
\label{leF3}
Let $\ST\in\F_3$. Then
\[ \int_{K_0Q(\ST)}\int_{h(x)/K_0}^{K_0d(Q(\ST))} \beta_1(x,K_0t)^2\, \frac{dt}{t}\, d\mu x > \varepsilon^{6k+1}\mu(Q(\ST)). \]
\end{lemma}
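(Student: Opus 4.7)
The plan is to exploit the $\F_3$ structure to find, inside each $Q \in m_2(\ST)$, a child cube $R_Q$ whose approximating plane is significantly tilted with respect to $V_{Q(\ST)}$, and then to convert this tilt into a pointwise lower bound for $\beta_1$ at a scale comparable to $d(Q)$. For each such $Q$ we have $\CH(Q)\subset\G$ (by definition of $m_2$), so (F7) forces a child $R_Q\in\CH(Q)$ with $\kulma{V_{R_Q}}{V_{Q(\ST)}}>1+\delta$, while (F5) gives $\kulma{V_Q}{V_{Q(\ST)}}\leq 1+\delta$. An angle-arithmetic computation (carried out in the Euclidean picture via Lemma~\ref{lekulmat}) then yields a constant $c>0$ such that, for every $V\in\V^k$, at least one of
\[ \kulma{V_{R_Q}}{V}\geq 1+c\delta^2,\qquad \kulma{V_Q}{V}\geq 1+\tfrac{\delta}{2} \]
holds, i.e.\ no single plane $V$ can be simultaneously close to both $V_{R_Q}$ and $V_Q$.

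The heart of the argument is the pointwise estimate: for every $x\in R_Q$ and every $t$ with $K_0 t\in[d(Q),2d(Q)]$,
\[ \beta_1(x,K_0t)\geq c_1\delta, \]
where $c_1$ depends only on $k$, $C_E$ and $K_0$. Given any candidate plane $V\in\V^k$, the dichotomy above splits into two cases. If $\kulma{V_{R_Q}}{V}\geq 1+c\delta^2$, Lemma~\ref{leL} applied to $R_Q$ furnishes a $(k+1)$-simplex of points in $V_{R_Q}\cap R_Q(\varepsilon^2 d(R_Q))$ witnessing the compression; combined with $R_Q\subset V_{R_Q}(\varepsilon^2 d(R_Q))$ and Lemma~\ref{lePdist}, one shows that a definite $\mu$-proportion of $R_Q$ lies at distance $\gtrsim \delta\,d(R_Q)$ from $V$, yielding $\int_{B(x,K_0t)} d(y,V)\,d\mu y\gtrsim \delta\,d(R_Q)^{k+1}\asymp\delta\,d(Q)^{k+1}$ by \eqref{D}. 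The case $\kulma{V_Q}{V}\geq 1+\delta/2$ is handled identically, with $Q$ and $V_Q$ playing the roles of $R_Q$ and $V_{R_Q}$. In both cases the errors $\varepsilon^2 d(Q)$ (from $\G_1$) and the constant~$3$ (from Lemma~\ref{lePdist}) are absorbed because $\varepsilon$ is chosen much smaller than $\delta$.

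To assemble the final estimate, note that $x\in R_Q\subset Q\in\ST$ yields $h(x)\leq d(Q)$, so the $t$-interval of integration contains $[d(Q)/K_0,2d(Q)/K_0]$; integrating the pointwise bound gives
\[ \int_{h(x)/K_0}^{K_0 d(Q(\ST))}\beta_1(x,K_0t)^2\,\frac{dt}{t}\geq c_1^2\delta^2\log 2. \]
Integrating over $x\in R_Q$ contributes at least $c_2\delta^2\mu(R_Q)\asymp c_3\delta^2\mu(Q)$ by \eqref{D}, and since the cubes of $m_2(\ST)$ are pairwise disjoint and $\mu(\bigcup_{Q\in m_2(\ST)} Q)\geq \mu(Q(\ST))/2$ by the definition of $\F_3$, summing produces a lower bound of at least $c_4\delta^2\mu(Q(\ST))$, which exceeds $\varepsilon^{6k+1}\mu(Q(\ST))$ since $\varepsilon$ is fixed after $\delta$.

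The main obstacle is the pointwise $\beta_1$ bound: converting the abstract tilt $\kulma{V_{R_Q}}{V_{Q(\ST)}}>1+\delta$ into an estimate holding uniformly over all $V\in\V^k$. This requires the angle-arithmetic dichotomy above --- which in turn rests on \eqref{PV}, Lemma~\ref{lekulmat} and the $(k+1)$-point robustness supplied by Lemma~\ref{leL} --- together with care to absorb the Heisenberg projection loss in Lemma~\ref{lePdist} by the hierarchy $\varepsilon\ll\delta\ll K_0^{-1}$ fixed in Section~\ref{secbd}.
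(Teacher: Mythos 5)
Your proposal hinges on a pointwise lower bound $\beta_1(x,K_0t)\geq c_1\delta$ for $x\in R_Q$ and $K_0t\asymp d(Q)$, and this bound is false. Since $Q\in\ST\subset\G_1$, by the very definition of $\G_1$ we have $KQ\subset V_Q(\varepsilon^2 d(Q))$, so for $K_0t\leq 2d(Q)$ the whole ball $B(x,K_0t)$ lies within $\varepsilon^2 d(Q)$ of the single competitor plane $V_Q\in\V^k$; together with regularity this gives $\beta_1(x,K_0t)\leq C\varepsilon^2$, which is far below $\delta$ in the hierarchy of constants (recall $\varepsilon$ is chosen last, much smaller than $\delta$). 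The same objection defeats the ``angle dichotomy'': the stopping condition in (F7) compares $V_{R_Q}$ with $V_{Q(\ST)}$, not with $V_Q$, and it is perfectly possible that $\kulma{V_Q}{V_{Q(\ST)}}$ is just below the threshold $1+\delta$ while $\kulma{V_{R_Q}}{V_{Q(\ST)}}$ is just above it, with $V_{R_Q}$ and $V_Q$ nearly identical. Then $V=V_Q$ is simultaneously close to both planes, so no constant $c>0$ with your claimed alternative can exist. In other words, the tilt $\kulma{V_{R_Q}}{V_{Q(\ST)}}>1+\delta$ records a drift of directions accumulated over the many generations between $Q(\ST)$ and $Q$, and at each individual scale the local deviation from a plane (hence $\beta_1$ at that scale) can be arbitrarily small; no single-scale argument over the minimal cubes can recover the measure $\mu(Q(\ST))$.

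This cumulative nature of the deviation is exactly what the paper's proof is designed to capture, and it is the missing idea in your attempt. The paper argues by contradiction: assuming the square function over $K_0Q(\ST)$ (integrated in $t$ from $h(x)/K_0$ up to $K_0d(Q(\ST))$, i.e.\ over \emph{all} intermediate scales) is small, it builds the Lipschitz graph map $g$ over $V_{Q(\ST)}$ (Section~\ref{secS}), bounds its affine-deviation numbers $\gamma(p,t)$ by that square function (Lemma~\ref{leg}), and then uses the Calder\'on reproducing formula, the $L^2$ estimate \eqref{dleqmu} for $\nabla g_2$, the maximal function $N$ and \eqref{apu13}--\eqref{apu14} to show that on a large set $F$ the graph is nearly affine. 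Lemma~\ref{lem2F} then shows that cubes of $m_2(\ST)$ must project far from $F$, so $\mu\bigl(\bigcup_{Q\in m_2(\ST)}Q\bigr)\leq C\varepsilon^{1/3}\mu(Q(\ST))$, contradicting $\ST\in\F_3$. Your final bookkeeping (disjointness of the $m_2(\ST)$ cubes, $h(x)\leq d(Q)$, the measure bound from $\F_3$) is fine, but without a valid lower bound replacing the false pointwise estimate the argument does not get off the ground.
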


Fix $\ST\in\F_3$ and suppose to the contrary that the claim is not true for $\ST$.
Since the translations and the rotations are isometries we can assume that $V_{Q(\ST)}=X_k$ (see \ref{remProttr}).
We use the same notations as in Section~\ref{secS}.
By Lemma~\ref{leg}
\begin{align}
\label{AT}
  \int_0^{K_0d(Q(\ST))/2} \int_{U_1}\gamma(p,t)^2\, dp\, \frac{dt}{t}\leq C(K_0)\varepsilon\mu(Q(\ST)).
\end{align}
Let $\nu:\R^k\to\R$ be a radial $C^\infty$ function supported in $B^k(0,1)$ such that
$\int_{\R^k} f\nu = 0$
for any affine function $f:\R^k\to\R$ and
\[ \int_0^\infty |\hat\nu(tp)|^2\, \frac{dt}{t} = 1\]
for all $p\in\R^k\backslash\{0\}$. Denote $\nu_t(p)=t^{-k}\nu(t^{-1}p)$ for any $p\in\R^k$ and $t>0$.
Using Calder\'on's formula one can write
\[ g(p) = \int_0^\infty (\nu_t*\nu_t*g)(p)\, \frac{dt}{t} \]
for any $p\in\R^k$. (Notice that the above integral exists and depends continuously on $p$, because $g$ is
Lipschitz and has compact support.)
Set $L=K_0d(Q(\ST))/5$ and write $g=g_1+g_2$, where $g_1,g_2:\R^k\to\R^{2n-k}$ are defined by
\begin{align*}
  g_1(p) &= \int_L^\infty (\nu_t*\nu_t*g)(p)\, \frac{dt}{t} + \int_0^L (\nu_t*(\chi_{\R^k\backslash U_1}\cdot(\nu_t*g)))(p)\, \frac{dt}{t}, \\
  g_2(p) &= \int_0^L (\nu_t*(\chi_{U_1}\cdot(\nu_t*g)))(p)\, \frac{dt}{t}.
\end{align*}

\begin{lemma}
\label{leg1}
There is a constant $C$ such that $|\partial_jg_1(p)| \leq C\sqrt{\delta}$ and $|\partial_i\partial_jg_1(p)| \leq C\sqrt{\delta}/L$
for any $i,j\in\{1,\dotsc,k\}$ and $p\in U_2$.
\end{lemma}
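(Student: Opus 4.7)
My plan begins by decomposing the definition of $g_1$ and observing that on $U_2$ the second integral vanishes identically by a support argument. The balls $U_1$ and $U_2$ are concentric with radii $K_0 d(Q(\ST))/2$ and $K_0 d(Q(\ST))/4$, so $|p-q|\ge K_0 d(Q(\ST))/4 = 5L/4$ for every $p\in U_2$ and $q\in\R^k\setminus U_1$. Since $\nu_t$ is supported in $B^k(0,t)$ and $t\le L<5L/4$ throughout the second integral, $\nu_t(p-q)=0$ for every such pair. Hence $(\nu_t*(\chi_{\R^k\backslash U_1}\cdot(\nu_t*g)))(p)=0$ for all $p\in U_2$ and $t\in(0,L]$, and on $U_2$ the function $g_1$ agrees with
\[ g_1^{(1)}(p) := \int_L^\infty (\nu_t*\nu_t*g)(p)\,\frac{dt}{t}. \]

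To bound the partial derivatives of $g_1^{(1)}$, I plan to exploit that, by Lemma~\ref{leglip}, $g$ is $C\sqrt\delta$-Lipschitz, and that the extension of $g$ is supported in $U_{-1}$, whose Lebesgue measure satisfies $|U_{-1}|\le CL^k$ (since $U_{-1}$ has radius $2K_0 d(Q(\ST))=10L$). Transferring one derivative onto $g$ in the weak sense and using $\|\nu_t*\nu_t\|_\infty\le\|\nu_t\|_1\|\nu_t\|_\infty\le Ct^{-k}$ together with $\|\partial_j g\|_\infty\le C\sqrt\delta$ yields
\[ |\partial_j g_1^{(1)}(p)|\le\int_L^\infty \|\nu_t*\nu_t\|_\infty\|\partial_j g\|_\infty|U_{-1}|\,\frac{dt}{t}\le C\sqrt\delta L^k\int_L^\infty t^{-k}\,\frac{dt}{t}=C\sqrt\delta. \]

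For the mixed second derivatives I would shift an additional derivative onto the outer kernel, so that $\partial_i\partial_j g_1^{(1)}(p)=\int_L^\infty((\partial_i\nu_t)*\nu_t*\partial_j g)(p)\,dt/t$. Then $\|(\partial_i\nu_t)*\nu_t\|_\infty\le\|\partial_i\nu_t\|_\infty\|\nu_t\|_1\le Ct^{-k-1}$, and the same bookkeeping produces
\[ |\partial_i\partial_j g_1^{(1)}(p)|\le C\sqrt\delta L^k\int_L^\infty t^{-k-1}\,\frac{dt}{t}=\frac{C\sqrt\delta}{L}. \]
The only genuine subtlety is the initial support observation that eliminates the second integral on $U_2$; without it one would be forced to extract cancellation from the vanishing moments of $\nu$ in order to control the small-scale part, which would be considerably more delicate.
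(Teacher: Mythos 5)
Your proof is correct and follows essentially the same route as the paper: you kill the small-scale integral on $U_2$ by the same support observation (the paper records it on $U_{9/5}\supset U_2$), and you estimate the large-scale part by putting one derivative on $g$ (using its $C\sqrt{\delta}$-Lipschitz bound and support in $U_{-1}$) and the second on the kernel. The only cosmetic difference is that the paper first collapses the $t$-integral into the single kernel $\varphi=\int_L^\infty \nu_t*\nu_t\,\frac{dt}{t}$ with $|\varphi|\leq CL^{-k}$, $|\nabla\varphi|\leq CL^{-k-1}$, whereas you integrate the scale-by-scale bounds $Ct^{-k}$ and $Ct^{-k-1}$ over $t\geq L$, which amounts to the same computation.
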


\begin{proof}
We first notice that
\begin{align}
\label{apu10}
  \int_0^L (\nu_t*(\chi_{\R^k\backslash U_1}\cdot(\nu_t*g)))(p)\, \frac{dt}{t} = 0
\end{align}
for all $p\in U_{9/5} \supset U_2$.
Set
\[ \varphi(q) = \int_L^\infty (\nu_t*\nu_t)(q)\, \frac{dt}{t} \]
for all $q\in\R^k$.
By \eqref{apu10} one has $g_1(p)=(\varphi*g)(p)$ for all $p\in U_{9/5}$.
Fix $i,j\in\{1,\dotsc,k\}$ and $p\in U_2$.
Since $|\nu_t*\nu_t| \leq Ct^{-k}$ for any $t>0$, one has $|\varphi| \leq CL^{-k}$.
Further $|\nabla\varphi| \leq CL^{-k-1}$.
(Here $C$ depends on $\nu$.)
Particularly
\begin{align}
\label{apu11}
  \int_{U_{-1}}|\varphi(p-q)|\, dq \leq C\qquad\text{and}\qquad\int_{U_{-1}}|\partial_i\varphi(p-q)|\, dq \leq \frac{C}{L}.
\end{align}
Since $\varphi$ is bounded and $g$ has compact support, Lemma~\ref{leglip} and the dominated convergence
give $\partial_jg_1(p) = (\varphi*\partial_j g)(p)$. (Notice that $g$ is differentiable almost everywhere by Rademacher's theorem.)
Thus, since $\partial_i\varphi$ is bounded and $\partial_jg$ is compactly supported and bounded,
we further have $\partial_i\partial_jg_1(p) = (\partial_i\varphi*\partial_j g)(p)$.
Since $\partial_j g$ is supported in $U_{-1}$, the claim now follows from Lemma~\ref{leglip} and \eqref{apu11}.
\end{proof}

Define $g_{2,m}$ for any $m\in\N$ by setting
\begin{align*}
  g_{2,m}(p) &= \int_{1/m}^L (\nu_t*(\chi_{U_1}\cdot(\nu_t*g)))(p)\, \frac{dt}{t}
\end{align*}
for all $p\in\R^k$. Then $g_{2,m}\to g_2$ uniformly as $m\to\infty$. We also have that $g_{2,m}\to g_2$ in $L^2$
because $g_2$ is bounded and $\spt g_{2,m} \subset U_0$ for all $m$.
Now
\begin{align*}
  \partial_jg_{2,m}(p) = \int_{1/m}^L (\partial_j\nu_t*(\chi_{U_1}\cdot(\nu_t*g)))(p)\, \frac{dt}{t}
\end{align*}
for any $p\in\R^k$, $j\in\{1,\dotsc,k\}$ and $m\in\N$.
Using this we find a constant~$C=C(\nu)$ such that
\begin{align}
\label{apu12}
  \int_{\R^k} |\partial_jg_{2,m}(p)|^2\, dp \leq C\int_{1/m}^L \int_{U_1}\gamma(p,t)^2\, dp\, \frac{dt}{t}
\end{align}
for any $j\in\{1,\dotsc,k\}$ and $m\in\N$ (see \cite{MR1113517} or one dimensional case \cite[page~863]{MR1709304}).
Particularly $(g_{2,m})_m$ is a bounded sequence in the Sobolev space $W^{1,2}$ (by \eqref{AT}) and
a subsequence of $(\partial_jg_{2,m})_m$  converges weakly in $L^2$ to $\partial_jg_2$.
Thus by \eqref{apu12} and \eqref{AT}
\begin{align}
\label{dleqmu}
  \int_{\R^k} |\partial_jg_2(p)|^2\, dp \leq C(K_0)\varepsilon\mu(Q(\ST))
\end{align}
for any $j\in\{1,\dotsc,k\}$.

Define a function $N:\R^k\to\R$ by setting
\[ N(p) = \sup_B\frac{m_B(|g_2-m_B(g_2)|)}{d(B)}, \]
where the supremum is taken over all balls $B$ in $\R^k$ containing $p$ and having (positive) radius at most $L$.
Here we use the notation $m_B(f)=\dashint_B f$ for locally integrable functions $f:\R^k\to\R$.
From Poincar\'e's inequality and the Hardy-Littlewood maximal inequality one now gets
(see \cite[page~75]{MR1113517} or one dimensional case \cite[Lemma~5.3]{MR1709304})
\begin{align}
\label{Nleqd}
  \int_{\R^k} N(p)^2\, dp \leq C\max_j\int_{\R^k} |\partial_jg_2(p)|^2\, dp.
\end{align}
Since $g_2|_{U_2}$ is $C\sqrt{\delta}$-Lipschitz by Lemmas~\ref{leglip} and \ref{leg1}, one gets
(see \cite[Lemma~11.8]{MR1113517})
that for any closed ball $B\subset U_2$
\begin{align}
\label{apu13}
  \sup_{p\in B} |g_2(p)-m_B(g_2)| \leq C\delta^{\frac{k}{2(k+1)}}d(B)N(q)^{\frac{1}{k+1}}
\end{align}
whenever $q\in B$.
Set $F=\{\, q\in U_3\, :\, N(q)^3 \leq \varepsilon\, \}$.
Using \eqref{apu13}, Lemma~\ref{leg1} and Taylor's theorem one gets (see \cite[Lemma~11.9]{MR1113517}) that
\begin{align}
\label{apu14}
  \sup_{p\in B^k(p_0,r)} |g(p)-g(p_0)-Dg_1(p_0)(p-p_0)| \leq C\delta^{\frac{k}{2(k+1)}}\varepsilon^{\frac{1}{3(k+1)}}r + C\sqrt{\delta}L^{-1}r^2
\end{align}
whenever $r\leq L/4$ and $B^k(p_0,r)\cap F\neq\emptyset$.
For any $p\in U_2$ let $\Delta_p\subset\R^{2n}$ be the $k$-plane which is the graph of the affine function
$q\mapsto g(p)+Dg_1(p)(q-p)$.

\begin{lemma}
\label{lem2F}
If $Q\in m_2(\ST)$ then $d(P(Q),F) > d(Q)$.
\end{lemma}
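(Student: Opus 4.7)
\emph{Plan of proof.} Suppose for contradiction that the conclusion fails, so there exist $x_Q\in Q$ and $q_0\in F$ with $|q_0-P(x_Q)|\leq d(Q)$. Set $p_0=q_0$ and $r=2d(Q)$. Because $P$ is $1$-Lipschitz, $P(Q)\subset B^k(p_0,r)$; since $d(Q)\ll K_0 d(Q(\ST))$ (valid once $K_0$ has been chosen large), also $B^k(p_0,r)\subset U_2$ and $r\leq L/4$. Since $q_0\in F\cap B^k(p_0,r)$, inequality~\eqref{apu14} applies at $(p_0,r)$, yielding an affine function $a(p):=g(p_0)+Dg_1(p_0)(p-p_0)$ with
\[ |g(p)-a(p)|\leq\eta\qquad\text{for all $p\in B^k(p_0,r)$,} \]
where $\eta:=C\delta^{k/(2(k+1))}\varepsilon^{1/(3(k+1))}r+C\sqrt{\delta}L^{-1}r^2$, and by Lemma~\ref{leg1} one has $\Lip(a)\leq C\sqrt\delta$.

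Since $Q\in m_2(\ST)$, the first clause of (F7) fails for $Q$ (because $\CH(Q)\subset\G$), so the second clause must hold: there exists $R\in\CH(Q)$ with $\kulma{V_R}{V_{Q(\ST)}}>1+\delta$. Recall $V_{Q(\ST)}=X_k$ and $d(R)\geq d(Q)/C$ by \eqref{D}; the goal is to contradict this by showing $\kulma{V_R}{X_k}\leq 1+\delta$. Apply Lemma~\ref{leL} to $R$ to obtain $y_0,\dotsc,y_k\in V_R\cap R(\varepsilon^2 d(R))$ with $d(y_{i+1},L_i)>cd(R)$, hence pairwise $cd(R)$-separated (in Euclidean norm, by \eqref{dV}). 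For each $i$, pick $z_i\in R$ with $d(y_i,z_i)\leq\varepsilon^2 d(R)$; then $|y_i'-z_i'|\leq\varepsilon^2 d(R)$. Because $z_i\in Q\in\min(\ST)$ gives $h(z_i)\leq d(Q)\leq Cd(R)$, Lemma~\ref{legappr} yields $|P^\bot(z_i)-g(P(z_i))|\leq C\sqrt\varepsilon d(R)$; combining this with $|g(P(z_i))-a(P(z_i))|\leq\eta$ from \eqref{apu14}, $|P(z_i)-P(y_i)|\leq\varepsilon^2 d(R)$, and $\Lip(a)\leq C\sqrt\delta$, one concludes that each $y_i'$ lies within Euclidean distance $\eta_1:=C\sqrt\varepsilon d(R)+\eta$ of the affine graph
\[ W':=\{\,(p,a(p))\,:\, p\in\R^k\,\}\subset\R^{2n}, \]
which is itself a graph over $X_k'$ of slope at most $C\sqrt\delta$.

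Because the points $y_0',\dotsc,y_k'\in V_R'$ are $cd(R)$-separated and $(k+1)$-fold affinely independent (by the $L_i$-separation property of Lemma~\ref{leL}), the projections $P(y_i)$ are $c'd(R)$-separated and span $\R^k$ affinely, and the standard affine-span comparison shows that $V_R'$ is a graph over $X_k'$ whose slope differs from that of $W'$ by at most $O(\eta_1/d(R))=O(\sqrt\varepsilon+\eta/d(R))$. Lemma~\ref{lekulmat} then gives
\[ \kulma{V_R}{X_k}\leq\sqrt{1+\bigl(C\sqrt\delta+O(\sqrt\varepsilon+\eta/d(R))\bigr)^2}, \]
and by fixing $K_0$ first, $\delta$ small second, and finally $\varepsilon$ small enough to absorb the $O(\sqrt\varepsilon)$ term (using the favorable exponents $\delta^{k/(2(k+1))}$ and $\varepsilon^{1/(3(k+1))}$ in $\eta$), the right-hand side can be brought strictly below $1+\delta$, yielding the contradiction.

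The main obstacle is the final quantitative balance: the irreducible $O(\sqrt\delta)$ contribution from $\Lip(a)$ in Lemma~\ref{leg1} is of the same order as the target margin $\delta$, so closing the inequality hinges on careful bookkeeping of the absolute constants and on exploiting the strict hierarchy ``$K_0$ first, $\delta$ second, $\varepsilon$ after $K$'' declared in Section~\ref{secbd}. A cleaner alternative is to exhibit an explicit $V^*\in\V^k$ by perturbing $W'$ to an isotropic graph, verify $R\subset V^*(2K\varepsilon^2 d(R))$ via the bounds above, and then apply Lemma~\ref{leQP} to obtain $\kulma{V_R}{V^*}\leq 1+\varepsilon$, reducing the remaining work to bounding $\kulma{V^*}{X_k}$ directly from the slope of $a$.
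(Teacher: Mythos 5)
Your overall skeleton matches the paper's: argue by contradiction via (F7), take the spanning points of Lemma~\ref{leL} in $V_R$ for a child $R\in\CH(Q)$, and show they lie close to the affine graph $\Delta_{P(x)}$ coming from Lemma~\ref{legappr} and \eqref{apu14}. However, your final step has a genuine gap that "careful bookkeeping" cannot close. You compare the approximating plane $W'$ (the graph of $a$) to $X_k'$ using only $\Lip(Dg_1)\leq C\sqrt{\delta}$ from Lemma~\ref{leg1}. That yields, at best, $\kulma{V_R}{V_{Q(\ST)}}\leq\sqrt{1+(C\sqrt{\delta}+o(1))^2}\approx 1+\tfrac{C^2}{2}\delta$, and the constant $C$ there is an absolute constant (depending on $k$, $C_E$ and the kernel $\nu$), not a parameter in the hierarchy ``$K_0$, then $\delta$, then $K$, then $\varepsilon$''. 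Unless $C\leq\sqrt{2}$ — which nothing guarantees — this is not $\leq 1+\delta$, so no contradiction with $\kulma{V_R}{V_{Q(\ST)}}>1+\delta$ is obtained. Your proposed fallback (perturb $W'$ to some $V^*\in\V^k$, check $R\subset V^*(2K\varepsilon^2 d(R))$, apply Lemma~\ref{leQP}) runs into the same wall twice: the available deviation of points of $R$ from $W'$ is of order $\bigl(\sqrt{\varepsilon}+\delta^{\frac{k}{2(k+1)}}\varepsilon^{\frac{1}{3(k+1)}}+\sqrt{\delta}K^{-1}\bigr)d(R)$, which is far larger than $2K\varepsilon^2 d(R)$, and even granting $\kulma{V_R}{V^*}\leq 1+\varepsilon$ you would still have to bound $\kulma{V^*}{V_{Q(\ST)}}$, for which the only input is again the $C\sqrt{\delta}$ slope of $a$.

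The missing idea is the paper's intermediate cube: one never compares $\Delta_{P(x)}$ to $X_k$ through the derivative of $g_1$. Instead, after showing (as in \eqref{apu15}) that the Euclidean angle between $V_R'$ and $\Delta_{P(x)}$ is $<\delta/9$ (here the curvature term enters only as $C\sqrt{\delta}L^{-1}r^2\leq C\sqrt{\delta}K^{-1}r$, harmless once $K$ is large depending on $\delta$), one takes the minimal $Q^*\in\ST$ with $Q\subset Q^*$ and $2Kd(Q^*)\geq d(Q(\ST))$, runs the same graph-approximation argument at the scale $d(Q^*)$ to get the angle between $V_{Q^*}'$ and $\Delta_{P(x)}$ also $<\delta/9$, and then uses Lemma~\ref{leQP} (with $Q^*$ and $V=V_{Q(\ST)}$, since $d(Q(\ST))\leq 2Kd(Q^*)$) to get $\kulma{V_{Q^*}}{V_{Q(\ST)}}\leq 1+\varepsilon$, hence angle $<\delta/9$ after taking $\varepsilon/\delta$ small. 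Chaining the three comparisons gives angle $<\delta/3$ between $V_R'$ and $V_{Q(\ST)}'$, whence $\kulma{V_R}{V_{Q(\ST)}}\leq 1+\delta$ and the contradiction. (You also skipped the easy preliminary case $2Kd(R)\geq d(Q(\ST))$, handled directly by Lemma~\ref{leQP}, which is what makes the assumption $2Kd(R)<D^2\alpha d(Q(\ST))$ — and hence $r<L/K$ — available.) Without this detour through $Q^*$, the argument as you wrote it does not prove the lemma.
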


\begin{proof}
Suppose to the contrary that $Q\in m_2(\ST)$ with $d(P(Q),F) \leq d(Q)$.
Since $\CH(Q) \subset \G$ by definition of $m_2(\ST)$, we get by (F7) a contradiction $Q\not\in\min(\ST)$ by showing that
$\kulma{V_R}{V_{Q(\ST)}} \leq 1 + \delta$ for for all $R\in\CH(Q)$.
For that reason, fix $R \in \CH(Q)$.  
If now $2Kd(R) \geq d(Q(\ST))$ then $\kulma{V_R}{V_{Q(\ST)}} \leq 1 + \varepsilon$ by Lemma~\ref{leQP}.
Thus we may assume that $2Kd(R) < D^2\alpha d(Q(\ST))$.
Pick $x\in Q$ and set $r= 3d(Q)$.
Now $r < L/K \leq L/4$ (by \eqref{D} choosing $K_0$ and $K$ large enough) and $B^k(P(x),r) \cap F \neq\emptyset$.
By this, Lemma~\ref{legappr} and \eqref{apu14}
\begin{align}
\label{apu15}
\begin{split}
  d_e(y',\Delta_{P(x)}) &\leq | P^\bot(y) - g(P(x)) - Dg_1(P(x))(P(y)-P(x))| \\
  &\leq C\sqrt{\varepsilon}h(y) + C\delta^{\frac{k}{2(k+1)}}\varepsilon^{\frac{1}{3(k+1)}}r + C\sqrt{\delta}K^{-1}r
\end{split}
\end{align}
for any $y\in 2Q$.
Let $y_0,\dotsc,y_k\in V_{R} \cap Q(\varepsilon^2 d(R))$ be as in Lemma~\ref{leL} (recalling that $R\in\G$).
Then by \eqref{dV}, \eqref{D}  and \eqref{apu15}
\[ d_e(y_j',\Delta_{P(x)}) \leq C\left( \sqrt{\varepsilon} + \delta^{\frac{k}{2(k+1)}}\varepsilon^{\frac{1}{3(k+1)}} + \sqrt{\delta}K^{-1} \right)d(R) \]
for any $j\in\{1,\dotsc,k\}$. 
Further
$d_e(y_{i+1}',L_i') > cd(R)$ for all $i\in\{0,\dotsc,k-1\}$,
where $L_i$ and $c$ are as in Lemma~\ref{leL}.
Thus the euclidean angle between $V_R'$ and $\Delta_{P(x)}$ is less than $\delta/9$ by taking $\varepsilon$ small enough and $K$ large enough depending on $\delta$.
Let $Q^*$ be the minimal cube in $\ST$ such that $Q\subset Q^*$ and $2Kd(Q^*) \geq d(Q(\ST))$.
Then $2Kd(Q^*) < D^2\alpha d(Q(\ST))$ (by \eqref{D}) and by the above argument the angle between $V_{Q*}'$ and $\Delta_{P(x)}$ is also
less than $\delta/9$.
Now $\kulma{V_{Q^*}}{V_{Q(\ST)}} \leq 1+\varepsilon$ by Lemma~\ref{leQP}. Choosing $\varepsilon/\delta$ small the euclidean angle between
$V_{Q^*}'$ and $V_{Q(\ST)}'$ is less than $\delta/9$ (by Lemma~\ref{lekulmat}).
Thus the angle between $V_R'$ and $V_{Q(\ST)}'$ is less than $\delta/3$ and so $\kulma{V_R}{V_{Q(\ST)}} \leq 1 + \delta$ (by choosing $\delta$ small).
\end{proof}

For each $Q\in m_2(\ST)$ pick $x_Q\in Q$.
By the $5r$-covering lemma we find $\mathcal{T}\subset m_2(\ST)$ such that
the balls $B(x_Q,3d(Q))$, $Q\in\mathcal{T}$, are disjoint and
\[ G := \bigcup_{Q\in m_2(\ST)} Q \subset \bigcup_{Q\in\mathcal{T}} B(x_Q,15d(Q)). \]
Since $d(x_Q,x_{R}) > 3\max\{d(Q),d(R)\} \geq h(x_Q)$ for any distinct $Q,R\in\mathcal{T}$, Lemma~\ref{ledh} gives
(by choosing $\delta$ small) that the balls $B^k(P(x_Q),d(Q))$, $Q\in\mathcal{T}$, are also disjoint.
Further $B^k(P(x_Q),d(Q)) \subset U_3\backslash F$ for any $Q\in m_2(\ST)$ by Lemma~\ref{lem2F}.
Hence by \eqref{Nleqd} and \eqref{dleqmu}
\begin{align*}
  \mu(G) &\leq \sum_{Q\in\mathcal{T}}\mu(B(x_Q,15d(Q))) \leq 15^kC_E\sum_{Q\in\mathcal{T}}d(Q)^k \\
  &\leq C\LEK\biggl(\bigcup_{Q\in\mathcal{T}}B^k(P(x_Q),d(Q))\biggr) \leq C\LEK(U_3\backslash F) \\
  &\leq C\varepsilon^{-2/3}\int_{U_3\backslash F} N(p)^2\, dp \leq C(K_0)\varepsilon^{1/3}\mu(Q(\ST)).
\end{align*}
Choosing $\varepsilon$ small enough this means that $\ST\not\in\F_3$ which is a contradiction.

\section{End of the proof}
\label{end}

In this section we follow \cite[Sections 12 and 16]{MR1113517} and use the same assumptions and notations as in Section~\ref{secbd}
and assume further that \eqref{oletus} is satisfied.
Now the assumptions of Sections~\ref{secF} and \ref{secF3} are also satisfied (see Section~\ref{secbd}).
From this on the constants $C$ may depend without special mention on $\varepsilon$, $K$, $\delta$ or $K_0$.

\begin{lemma}
\label{leF}
  There is a constant $C$ such that
  \[ \sum_{\substack{\ST\in\F \\ Q(\ST)\subset R}} \mu(Q(\ST)) \leq C\mu(R)\qquad\text{for all $R\in\Delta$}. \]
\end{lemma}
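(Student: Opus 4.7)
The plan is to leverage the decomposition $\F = \F_1\cup\F_2\cup\F_3$: Lemma~\ref{leF1F2} already handles $\F_1\cup\F_2$, so it suffices to prove
$$\sum_{\substack{\ST\in\F_3 \\ Q(\ST)\subset R}} \mu(Q(\ST)) \leq C\mu(R),$$
for which Lemma~\ref{leF3} and the Carleson hypothesis~\eqref{oletus} will be combined.

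The first step is to apply Lemma~\ref{leF3} to each $\ST\in\F_3$ with $Q(\ST)\subset R$ and sum; after substituting $s=K_0 t$ in the inner integral and interchanging sum and integration by Fubini, one obtains
$$\varepsilon^{6k+1}\sum_{\substack{\ST\in\F_3 \\ Q(\ST)\subset R}} \mu(Q(\ST)) \leq \int_E \int_0^\infty \beta_1(x,s)^2\, N(x,s)\, \frac{ds}{s}\, d\mu(x),$$
where $N(x,s)$ counts the $\ST\in\F_3$ satisfying $Q(\ST)\subset R$, $x\in K_0 Q(\ST)$ and $h_{\ST}(x)\leq s\leq K_0^2 d(Q(\ST))$. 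Fixing any $y\in R$, one checks that $N(\cdot,\cdot)$ is supported in $(B(y,Cd(R))\cap E)\times(0,Cd(R)]$ for some constant $C=C(K_0)$, so once $N$ is known to be bounded by a constant $C_N$, the hypothesis~\eqref{oletus} applied at $y$ with radius $Cd(R)$, together with the $k$-regularity of $E$, will bound the right-hand side by $C'\mu(R)$.

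The heart of the argument is thus the overlap bound $N(x,s)\leq C_N$ for a constant depending only on $k$, $C_E$ and $K_0$. The key structural fact is that for each $\ST$ contributing to $N(x,s)$ the condition $h_{\ST}(x)\leq s$ produces a cube $Q^*_{\ST}\in\ST$ with $d(x,Q^*_{\ST})+d(Q^*_{\ST})\leq s$, and property~(F2) makes the assignment $\ST\mapsto Q^*_{\ST}$ injective. Combined with the constraint $d(Q(\ST))\geq s/K_0^2$, property~(F4) (which forces the ancestor chain of $Q^*_{\ST}$ up to $Q(\ST)$ to lie entirely in $\ST$), the fact that $x\in K_0 Q(\ST)$, and the dyadic/regularity properties \eqref{D1}--\eqref{D6}, a tree-combinatorial argument should bound the number of admissible configurations uniformly in $(x,s)$.

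I expect this overlap estimate to be the main technical obstacle. It is a Carleson-type counting argument in the corona-decomposition framework of \cite[Sections 12 and 16]{MR1113517}, and it requires careful use of the interplay between (F2), (F4), the dyadic nesting, and the stopping-time condition $h_{\ST}(x)\leq s$ in order to collapse contributions coming from many different scales of $Q(\ST)$ into a single constant.
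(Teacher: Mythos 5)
Your reduction coincides with the paper's proof of Lemma~\ref{leF}: split $\F=\F_1\cup\F_2\cup\F_3$, handle $\F_1\cup\F_2$ by Lemma~\ref{leF1F2}, apply Lemma~\ref{leF3} to each $\ST\in\F_3$ with $Q(\ST)\subset R$, interchange sum and integral, and feed the result into \eqref{oletus} at scale comparable to $d(R)$; the support observation for $N(x,s)$ and the final use of regularity and \eqref{D} are fine. What you have not supplied is the one estimate the lemma actually hinges on: the uniform bound $N(x,s)\leq C_N$ (the paper's bounded overlap of the regions $E_\ST$), which you only assert ``should'' follow from a tree-combinatorial argument and yourself flag as the main obstacle. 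Moreover, the ``key structural fact'' you propose is not sufficient as stated: the cube $Q^*_\ST\in\ST$ with $d(x,Q^*_\ST)+d(Q^*_\ST)\leq s$ produced by $h_\ST(x)\leq s$ may be arbitrarily small compared with $s$, and injectivity of $\ST\mapsto Q^*_\ST$ (from (F2)) puts no bound on how many pairwise distinct tiny dyadic cubes can lie within distance $s$ of $x$, hence no bound on $N(x,s)$.

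The missing step is short but specific, and it is exactly how the paper proves its overlap estimate. Since $s\leq K_0^2d(Q(\ST))$, the whole ancestor chain of $Q^*_\ST$ up to $Q(\ST)$ lies in $\ST$ by (F4) (and \eqref{C6b}), so one may replace $Q^*_\ST$ by the minimal cube $Q^\sharp\in\ST$ containing it with $d(Q^\sharp)>s/K_0^2$; such a cube exists because $Q(\ST)$ itself qualifies. By minimality and \eqref{D} one also has $d(Q^\sharp)\leq C(K_0)s$, hence $\mu(Q^\sharp)\geq C(K_0)^{-1}s^k$, while $d(x,Q^\sharp)\leq d(x,Q^*_\ST)\leq s$. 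The assignment $\ST\mapsto Q^\sharp$ is still injective by (F2), the cubes $Q^\sharp$ occupy only boundedly many dyadic generations (the number depending on $K_0$, $D$, $\alpha$), cubes of a fixed generation are disjoint by \eqref{D2}, and they are all contained in $B(x,Cs)$ whose $\mu$-measure is at most $C(K_0)s^k$ by $k$-regularity; dividing measures gives $N(x,s)\leq C(K_0)$. With this overlap bound inserted, your argument is complete and is the same as the paper's.
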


\begin{proof}
For any $\ST\in\F$ denote
\[ E_\ST = \{\, (x,t) \in K_0Q(\ST) \times ]0,K_0d(Q(\ST))[\, : \, h_\ST(x) < K_0t\, \}. \]
Suppose for a while that $\ST\in\F$ and $(x,t)\in E_{\ST}$. Then
$d(x,Q)+d(Q) < K_0t < K_0^2d(Q(\ST))$ for some $Q\in\ST$. Let $Q^*$ be the minimal cube in $\ST$ such that $Q\subset Q^*$
and $K_0d(Q^*) > t$. Then (by \eqref{D}) $K_0d(Q^*) \leq D^2\alpha t$ and $C\mu(Q^*) > t^k$.
Since trivially $d(x,Q^*) \leq d(x,Q) < K_0t$ we conclude by the regularity that there is a constant $C$ such that
\begin{align}
\label{apu16}
  \sum_{\ST\in\F} \chi_{E_\ST}(x,t) \leq C\qquad\text{for all $(x,t) \in E\times\R$}.
\end{align}
Using Lemma~\ref{leF3}, \eqref{apu16}, \eqref{oletus}\label{ol} and \eqref{D} one gets for any $R\in\Delta$
\begin{align*}
  \sum_{\substack{\ST\in\F_3 \\ Q(\ST)\subset R}} \mu(Q(\ST))
  &< \varepsilon^{-6k-1} \sum_{\substack{\ST\in\F_3 \\ Q(\ST)\subset R}}
  \int_{K_0Q(\ST)}\int_{h_\ST(x)/K_0}^{K_0d(Q(\ST))} \beta_1(x,K_0t)^2\, \frac{dt}{t}\, d\mu x  \\
  &\leq C\int_0^{K_0d(R)}\int_{K_0R} \beta_1(x,K_0t)^2\, d\mu x\, \frac{dt}{t}
  \leq C\mu(R).
\end{align*}
The claim now follows from Lemma~\ref{leF1F2}.
\end{proof}

Theorem~\ref{th} follows from the following lemma.

\begin{lemma}
  For any $\eta>0$ there is $C>0$ such that for all $z\in E$ and $r>0$ there is $F\subset\HE^n$ and
  a $C$-bilipschitz mapping $f:F\to\R^k$ such that
  $\mu(B(z,r)\backslash F) \leq \eta r^k$.
\end{lemma}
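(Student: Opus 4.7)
The plan is to adapt the argument of \cite[Sections 12 and 16]{MR1113517}.

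First, by \eqref{D4}--\eqref{D5} pick $R_0 \in \Delta$ with $z \in R_0$ and $d(R_0)$ comparable to $r$; then $\mu(R_0) \asymp r^k$ and $B(z,r) \cap E$ is contained in a bounded dilate of $R_0$. It therefore suffices to find $F \subset R_0$ together with a $C$-bilipschitz $f : F \to \R^k$ such that $\mu(R_0 \setminus F) \leq \eta' \mu(R_0)$ for a suitable multiple $\eta'$ of $\eta$.

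Second, I would build a finite tree $\mathcal{T}$ of stopping time regions, rooted at the $\ST_0 \in \F$ with $R_0 \subset Q(\ST_0)$ and branching, at each $Q \in m_2(\ST)$, into the stopping time regions starting below $Q$. For each $\ST \in \mathcal{T}$ set
\[
  Z_\ST := \bigl\{\, x \in K_0 Q(\ST) \cap R_0 \, :\, h_\ST(x) = 0\, \bigr\}.
\]
Lemma~\ref{leF} gives the Carleson packing
\[
  \int_{R_0} \#\{\, \ST \in \mathcal{T} \, :\, x \in Q(\ST)\, \}\, d\mu(x)
   = \sum_{\ST \in \mathcal{T}} \mu(Q(\ST))
   \leq C \mu(R_0),
\]
so Chebyshev's inequality shows that, for $N$ of order $1/\eta$, the set of points of $R_0$ lying in more than $N$ nested tops $Q(\ST)$ has measure at most $\eta \mu(R_0)/4$. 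Together with the (F1) control of ``escape'' through non-$\G$ children of $m_1$ minimum cubes at depth $\leq N$, this shows that $F := \bigcup_{\ST \in \mathcal{T}_{\leq N}} Z_\ST$ satisfies $\mu(R_0 \setminus F) \leq \eta \mu(R_0)$. By Lemma~\ref{ledh} applied at the limit case $h_\ST(x) = h_\ST(y) = 0$, the horizontal projection $P_{V_{Q(\ST)}}|_{Z_\ST}$ is $(1+2\delta)$-bilipschitz on each piece.

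The main obstacle is assembling these local maps into a single bilipschitz $f : F \to \R^k$, since the target planes $V_{Q(\ST)}$ vary with $\ST$. The plan is to use Remark~\ref{remProttr} to identify each $V_{Q(\ST)}$ isometrically with $X_k \cong \R^k$ via a rotation and a translation, and then translate the resulting images by carefully chosen vectors $v_\ST \in \R^k$ so that distinct pieces $f(Z_\ST)$ are separated in $\R^k$ by a distance exceeding the diameters they had in $\HE^n$. Verifying that this piecewise definition is globally bilipschitz requires estimating $d(x,y)$ against $|f(x)-f(y)|$ for $x \in Z_\ST$ and $y \in Z_{\ST'}$ with $\ST \ne \ST'$, using Ahlfors regularity together with the angle control (F5) along the unique tree path between $\ST$ and $\ST'$. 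Since the truncated tree has depth at most $N = N(\eta)$ and only boundedly many cubes $Q(\ST)$ of a given scale can meet any fixed ball, the resulting bilipschitz constant depends only on $\eta$ and the absolute constants, which is exactly what the statement requires.
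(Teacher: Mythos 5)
Your outline reproduces the right skeleton (a tree of stopping-time regions, the Carleson packing from Lemma~\ref{leF} plus Chebyshev to bound the depth, and Lemma~\ref{ledh} to see that $P_{V_{Q(\ST)}}$ is $(1+2\delta)$-bilipschitz on each piece), but the heart of the lemma --- assembling the local projections into one bilipschitz map --- is exactly the part you leave as a plan, and the plan as stated does not work. First, you never remove boundary strips: in the paper the set $F$ excludes $F_1=\bigcup_{Q\in\T}\sigma(Q)$, and it is precisely this (via \eqref{D6}) that guarantees two points of $F$ lying in different cubes of the tree are at distance at least a fixed multiple ($\tau/D$) of the relevant scale, cf.\ \eqref{apu20}. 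Without it, $x\in Z_\ST$ and $y\in Z_{\ST'}$ for a region $\ST'$ hanging just below a minimal cube of $\ST$ can be arbitrarily close in $\HE^n$, while your prescription forces $|f(x)-f(y)|$ to exceed a fixed fraction of the piece diameters; no choice of translation vectors $v_\ST$ can then make $f$ Lipschitz. Second, separating the images ``by a distance exceeding the diameters they had in $\HE^n$'' is also not enough for the lower bound: two pieces can be at mutual distance comparable to the diameter of a much larger common ancestor, so the image separation must track the actual distance $d(R_1,R_2)$ between the pieces, not just their sizes. The paper resolves both tensions simultaneously by the recursive target construction: cubes $t(Q)\subset\R^k$ with side $c_1^{\ell(Q)}\alpha^{j_Q}$ (a geometric shrinking factor per tree level, \eqref{rec}), nested as in \eqref{apu18}, separated as in \eqref{apu19}, and, inside a region, centered at $\phi_Q(P_{W_Q}(x_R))$ so that \eqref{apug1}--\eqref{apug2} give image distances comparable (up to $c_1^{M}$) to $d(R_1)+d(R_2)+d(R_1,R_2)$. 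This scale reduction, with bilipschitz constant degrading like $c_1^{-M}=c_1^{-M(\eta)}$, is the missing mechanism in your proposal; isometric identifications of the planes $V_{Q(\ST)}$ with $\R^k$ plus translations cannot achieve both bounds.

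There are also two smaller gaps. Your tree branches only at $Q\in m_2(\ST)$, so points that exit through a minimal cube in $m_1(\ST)$ or through a non-$\G$ cube and then re-enter deeper regions are simply absent from $F=\bigcup Z_\ST$; and the claimed ``(F1) control of escape'' does not bound their measure, since (F1) is a Carleson packing condition and the union of the non-$\G$ cubes below $R_0$ can have measure comparable to $\mu(R_0)$ --- smallness only comes from counting how many such cubes (or region tops) contain a point, which is why the paper defines $\ell(Q)$ over all of $\T=\T_1\cup\T_2\cup\T_3$ and removes $F_2=\bigcup_{\ell(Q)>M}Q$. Finally, note that in the paper $F$ is not the union of the zero sets $Z_\ST$: points with $h_\ST(x)>0$ whose smallest $\T$-cube is a top $Q(\ST)$ are kept, and $f$ is defined on them by $\phi_{Q(\ST)}\circ P_{W_{Q(\ST)}}$; restricting to zero sets could be repaired measure-theoretically once the tree is corrected, but as written your measure estimate is not justified.
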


\begin{proof}
Let $\eta>0$, $z\in E$ and $r\in\R$ with $0<r\leq d(E)$. Let $m_0\in\ZZ$ be such that $D\alpha^{m_0-1} < r\leq D\alpha^{m_0}$.
Set
\begin{align*}
  \mathcal{R}_0 &= \{\, Q\in\Delta_{m_0}\, :\, Q\cap B(z,r)\neq\emptyset\, \}, \\
  \tilde{\F} &= \left\{\, \ST \cap \{ Q\, :\, Q\subset R\}\, :\, \text{$\ST\in\F$, $R\in\mathcal{R}_0$}\, \right\} \backslash\left\{\emptyset\right\}.
\end{align*}
Further let
\[ \tilde{\Delta} = \bigcup_{j=-\infty}^{m_0} \tilde{\Delta}_j\qquad\text{and}\qquad\tilde{\G} = \G\cap\tilde{\Delta}, \]
where
\[ \tilde{\Delta}_j = \biggl\{\, Q\in\Delta_j\, :\, Q\subset\bigcup_{R\in\mathcal{R}_0} R\, \biggr\}. \]
One easily sees that (F1), (F2) and Lemma~\ref{leF} remain valid if $\Delta_j$, $\Delta$, $\G$ and $\F$
are replaced by $\tilde{\Delta}_j$, $\tilde{\Delta}$, $\tilde{\G}$ and $\tilde{\F}$.
(For Lemma~\ref{leF} this is because the new maximal cubes~$Q(\ST)$, $\ST\in\tilde{\F}\backslash\F$, belong to $\mathcal{R}_0$.)

For any $Q\in\tilde{\Delta}$ denote
\[ \sigma(Q) = \{\, x\in Q\, :\, d(x,E\backslash Q) \leq \tau\alpha^{j_Q}\, \}, \]
where $\tau$ is a small positive constant fixed later.
Let $\T=\T_1\cup\T_2\cup\T_3$, where $\T_1=\tilde{\Delta}\backslash\G$,
$\T_2=\{\, Q(\ST)\, :\, \ST\in\tilde{\F}\, \}$ and $\T_3=\bigcup_{\ST\in\tilde{\F}} \min(\ST)$.
For any $Q\in\T$ set $\ell(Q) = \#\{\, R\in\T\, :\, R\neq Q\subset R\, \}$.
Define
\[ F = \biggl( B(z,r) \cap \bigcap_{j\in\Z}\bigcup_{Q\in\Delta_j} Q \biggr) \backslash \left( F_1\cup F_2 \right), \]
where
\[ F_1 = \bigcup_{Q\in\T} \sigma(Q)\qquad\text{and}\qquad F_2=\bigcup_{\substack{Q\in\T \\ \ell(Q) > M}} Q. \]
Using \eqref{D1}, (F1), Lemma~\ref{leF} and \eqref{D6} and choosing $\tau$ small enough and $M$ large enough depending on $\eta$
one gets $\mu(B(z,r)\backslash F)\leq\mu(F_1\cup F_2) \leq \eta r^k$ (see \cite[pages~102--103]{MR1113517}).

For the definition on $f$ we first define a map $t:\T\to \mathcal{P}(\R^k)$ recursively as follows.
First, for each $Q\in\mathcal{R}_0 = \tilde{\Delta}_{m_0}$ let $t(Q)$ be a cube in $\R^ k$ with side length $\alpha^{j_Q}$.
Since $\#\mathcal{R}_0 \leq 2^ kC_ED^{k+1}$ by the regularity and \eqref{D}, one can choose the cubes $t(Q)$ so that
\begin{align}
\label{talku}
  \alpha^{m_0}\leq d(t(Q_1),t(Q_2)) \leq C\alpha^{m_0}
\end{align}
for any distinct $Q_1,Q_2\in\mathcal{R}_0$ where $C$ is a constant (depending only on $C_E$, $D$ and $k$).

Let $Q\in\T$. Assume by recursion that a cube $t(Q)\subset\R^k$ has already been defined such that
\begin{align}
\label{rec}
  l(t(Q)) = c_1^{\ell(Q)}\alpha^{j_Q},
\end{align}
where $l(G)=d(G)/\sqrt{k}$ for $G\subset\R^k$ and $c_1>0$ is a small constant to be chosen later.
Assume first that $Q\in\T_1\cup\T_3$. Then $\ell(R)=\ell(Q)+1$ for all $R\in\CH(Q)\subset\T_1\cup\T_2$.
Since further $j_R<j_Q$ for all $R\in\CH(Q)$ and $\#\CH(Q)\leq D^2\alpha^k$ (by \eqref{D}),
one can choose by \eqref{rec} the cubes $t(R)$, $R\in\CH(Q)$, such that
\begin{align}
\label{apu17}
   l(t(R)) &= c_1^{\ell(R)}\alpha^{j_R}, \\   
\label{apu18}
   t(R) &\subset t(Q), \\
\label{apu19}
   d(t(R),t(R_1)) &\geq c_1^{\ell(Q)+1}\alpha^{j_Q}
\end{align}
for all distinct $R,R_1\in\CH(Q)$ provided $c_1$ is small enough (depending on $D$, $\alpha$ and $k$).

Assume now that $Q\in\T_2$ i.e. $Q=Q(\ST)$ for some $\ST\in\tilde{\F}$.
Denote $W_Q=V_{Q(\ST_0)}$ where $\ST_0\in\F$ is such that $\ST=\ST_0\cap\{ Q\, :\, Q\subset Q_0\}$ for some $Q_0\in\mathcal{R}_0$.
By the 1-Lipschitzness of $P_{W_Q}$, \eqref{D4}, \eqref{dV} and \eqref{rec} there is a function $\phi_Q:W_Q\to\R^k$ such that
\begin{align}
\label{phi1}
  &\phi_Q(P_{W_Q}(Q)) \subset 2^{-1}t(Q),  \\
\label{phi2}
  &|\phi_Q(p)-\phi_Q(q)| = \frac{1}{4D}c_1^{\ell(Q)}d(p,q)
\end{align}
for all $p,q\in W_Q$.
Here $\lambda G = \{\, x\in G\, :\, d(x,\R^k\backslash G) \geq (1-\lambda)l(G)/2\, \}$ for $\lambda\in\R$ and a cube $G\subset\R^k$.
For any $R\in\min(\ST)\backslash\{Q\}$ let $t(R)$ be a cube satisfying \eqref{apu17} and centered at $\phi_Q(P_{W_Q}(x_R))$,
where $x_R\in R$ is such that (see \eqref{D5} and \eqref{D4})
\begin{align}
\label{x_R}
  B(x_R,D^{-2}d(R)) \cap E \subset R.
\end{align}
Then \eqref{apu18} holds for $Q$ and any $R\in\min(\ST)$ by \eqref{phi1} and \eqref{rec} provided $2c_1 \leq \alpha$.
Since by \eqref{x_R} and \eqref{D3} $d(x_{R_1},x_{R_2}) \geq D^{-2}d(R_1) \geq D^{-2}h_{\ST_0}(x_{R_1})$ for any distinct $R_1,R_2\in\min(\ST)$,
one has by \eqref{phi2} and Lemma~\ref{ledh}
\begin{align*}
  |\phi_Q(P_{W_Q}(x_{R_1}))-\phi_Q(P_{W_Q}(x_{R_2}))| \geq \frac{c_1^{\ell(Q)}d(x_{R_1},x_{R_2})}{4D(1+2\delta)}
\end{align*}
for any $R_1,R_2\in\min(\ST)$.
Thus by using \eqref{x_R}, \eqref{D} and \eqref{apu17} and choosing $c_1$ small enough
\begin{align}
\label{apug1}
\begin{split}
  d(t(R_1),t(R_2)) &\geq \frac{c_1^{\ell(Q)}d(x_{R_1},x_{R_2})}{8D(1+2\delta)} - \frac{\sqrt{k}(l(t(R_1))+l(t(R_2)))}{2}
  + \frac{c_1^{\ell(Q)}d(R_1,R_2)}{8D(1+2\delta)} \\
  &\geq \left(\frac{1}{8D^3(1+2\delta)} - \sqrt{k}c_1D\right)c_1^{\ell(Q)}\max_{i=1,2}d(R_i)
  + \frac{c_1^{\ell(Q)}d(R_1,R_2)}{8D(1+2\delta)} \\
  &\geq c_1^{\ell(Q)+1}(d(R_1)+d(R_2)+d(R_1,R_2))
\end{split}  
\end{align}
for any distinct $R_1,R_2\in\min(\ST)$.
By \eqref{phi2} and the 1-Lipschitzness of $P_{W_Q}$ also
\begin{align}
\label{apug2}
\begin{split}
  d(t(R_1),t(R_2)) \leq \frac{c_1^{\ell(Q)}d(x_{R_1},x_{R_2})}{4D} \leq d(R_1)+d(R_2)+d(R_1,R_2)
\end{split}  
\end{align}
for any $R_1,R_2\in\min(\ST)$.

For any $x\in F$ let $Q(x)$ be the smallest cube in $\T$ which contains $x$.
Then $Q(x)\in\T_2$ and one can define $f:F\to\R^k$ by setting
\[ f(x) = \phi_{Q(x)}(P_{W_{Q(x)}}(x)) \]
for all $x\in F$.
From this on let $x,y\in F$ be distinct.
Suppose first that $x,y\in Q$ for some $Q\in\T_2$. Let $Q = Q(\ST)$ be the smallest such cube.

Assume very first that there are distinct $R_1,R_2\in\min(\ST)$ with $x\in R_1$ and $y\in R_2$.
Since $x,y\not\in F_1$ one has by \eqref{D4}
\begin{align}
\label{apu20}
  d(x,y) \geq \frac{\tau(d(R_1) + d(R_2))}{3D} +  \frac{d(R_1,R_2)}{3}.
\end{align}  
Since $f(x)\in t(R_1)$ and $f(y)\in t(R_2)$ by definition of $f$, \eqref{phi1} and \eqref{apu18}, one gets
by using \eqref{apu17}, \eqref{D}, \eqref{apug2} and \eqref{apu20}
\begin{align*}
  |f(x)-f(y)| &\leq d(t(R_1))+d(t(R_2))+d(t(R_1),t(R_2))  \\
  &\leq D\sqrt{k}c_1\left( d(R_1)+d(R_2)\right) + d(R_1)+d(R_2)+d(R_1,R_2)  \\
  &\leq Cd(x,y).
\end{align*}
By \eqref{apug1} one also has
\begin{align*}
  |f(x)-f(y)| \geq d(t(R_1),t(R_2)) \geq c_1^{\ell(Q)+1}(d(R_1)+d(R_2)+d(R_1,R_2)) \geq c_1^Md(x,y).
\end{align*}

Assume now that $y\in R_2\in\min(\ST)$ and $x\not\in R$ for all $R\in\min(\ST)$.
Since now $f(x) = \phi_Q(P_{W_Q}(x))$, the argument used to establish \eqref{apug1} and \eqref{apug2} also gives
\begin{align*}
  c_1^{\ell(Q)+1}(d(R_2)+d(x,R_2)) \leq d(f(x),t(R_2)) \leq d(R_2)+d(x,R_2).
\end{align*}
Since further
\begin{align*}
  \frac{\tau d(R_2)}{2D} +  \frac{d(x,R_2)}{2} \leq d(x,y) \leq d(R_2) + d(x,R_2),
\end{align*}
and $f(y)\in t(R_2)$, one has
\begin{align*}
  c_1^Md(x,y) \leq |f(x)-f(y)| \leq Cd(x,y).
\end{align*}

If $x\not\in R$ and $y\not\in R$ for all $R\in\min(\ST)$, then $f(x) = \phi_Q(P_{W_Q}(x))$ and $f(y) = \phi_Q(P_{W_Q}(y))$.
In this case \eqref{phi2}, Lemma~\ref{ledh} and the 1-Lipschitzness of $P_{W_Q}$ give directly
\begin{align*}
  \frac{c_1^M}{4D(1+2\delta)}d(x,y) \leq |f(x)-f(y)| \leq \frac{1}{4D}d(x,y).
\end{align*}
Let now $Q_1$ be the largest cube in $\tilde{\Delta}$ which contains $x$ but not $y$, and denote $Q_0=O(Q_1)$.

Assume that $x,y\in R\in\min(\ST)$ (and that $Q(\ST)$ is still the smallest cube in $\T_2$ with $x,y\in Q$).
Then necessarily $Q_0=R\in\T_3$ or $Q_0\in\T_1$, because otherwise $x,y \in Q_0\not\in\ST\cup\T_1$ which contradicts the minimality of $Q(\ST)$.
Now $y\in Q_2$ for some $Q_2\in\CH(Q_0)\backslash\{ Q_1\}$ and $Q_1,Q_2\in\T$.
As before, by \eqref{D4} and the definition of $F$
\begin{align*}
  D^{-1}\tau d(Q_1) \leq d(x,y) \leq d(Q_0).
\end{align*}
Further $f(x)\in t(Q_1) \subset t(Q_0)$ and $f(y)\in t(Q_2) \subset t(Q_0)$ (by definition of $f$, \eqref{phi1} and \eqref{apu18}).
Thus by \eqref{rec} (and \eqref{apu17}) and \eqref{D}
\begin{align}
\label{apu21}
  |f(x)-f(y)| \leq d(t(Q_0)) \leq c_1^{\ell(Q_0)}\sqrt{k}\alpha^{j_{Q_0}} \leq \sqrt{k}D^2\alpha\tau^{-1}d(x,y),
\end{align}
and by \eqref{apu19} and \eqref{D4}
\begin{align}
\label{apu22}
  |f(x)-f(y)| \geq c_1^{\ell(Q_0)+1}\alpha^{j_{Q_0}} \geq c_1^MD^{-1}d(Q_0) \geq c_1^MD^{-1}d(x,y).
\end{align}

Finally assume that there does not exist $Q\in\T_2$ with $x,y\in Q$.
Then $Q_1\in\mathcal{R}_0$ or $Q_0\in\T_1$.
In the latter case \eqref{apu21} and \eqref{apu22} are obtained as above.
In the former case
\begin{align*}
  D^{-2}\tau\alpha^{m_0} \leq D^{-1}\tau d(Q_1) \leq d(x,y) \leq 2r \leq 2D\alpha^{m_0}
\end{align*}
by \eqref{D} and the definition on $F$.
Further
\begin{align*}
  \alpha^{m_0} \leq |f(x)-f(y)| \leq (C+2)\alpha^{m_0}
\end{align*}
by \eqref{talku} and \eqref{rec}.
(This is again because $f(x)\in t(Q_1)$ and $f(y)\in t(Q_2)$, where $Q_2\in\mathcal{R}_0$ is such that $y\in Q_2$.)
This gives
\begin{align*}
  \frac{1}{2D}d(x,y) \leq |f(x)-f(y)| \leq \frac{D^2(C+2)}{\tau}d(x,y).
\end{align*}
\end{proof}

\bibliographystyle{plain}
\bibliography{viitteet}

\end{document}